\documentclass[a4paper,12pt]{amsart}
\usepackage{microtype,mathtools,a4wide, color} 

\usepackage[utf8]{inputenc}
\usepackage[alphabetic,backrefs]{amsrefs}

\renewcommand{\Re}{\operatorname{Re}}

\newcommand{\C}{\mathbb{C}}

\newcommand{\Tr}{\operatorname{Tr}}
\newcommand{\im}{\mathrm{i}}

\usepackage[hidelinks]{hyperref} 
\usepackage{xcolor}
\hypersetup{colorlinks=true,
    linkcolor=blue,   
    urlcolor=blue,     
    citecolor=green, 
}
\usepackage{relsize}

\newtheorem{thm}{Theorem}[section]
\newtheorem{rmk}[thm]{Remark}
\newtheorem{lemma}[thm]{Lemma}
\newtheorem{prop}[thm]{Proposition}
\newtheorem{cor}[thm]{Corollary}

\newtheorem{defi}[thm]{Definition}

\begin{document}
\author{Mar\'ia \'Angeles Garc\'ia-Ferrero}
\address{Instituto de Ciencias Matem\'aticas CSIC-UAM-UC3M-UCM, c/ Nicol\'as Cabrera 13--15,
28049 Madrid, Spain}
\email{\href{mailto:garciaferrero@icmat.es}{\texttt{garciaferrero@icmat.es}}}
\author{Joaquim Ortega-Cerd\`a}
\address{Dept.\ Matem\`atica i Inform\`atica,
 Universitat  de Barcelona,
Gran Via 585, 08007 Bar\-ce\-lo\-na, Spain and
CRM, Centre de Recerca Matem\`atica, Campus de Bellaterra Edifici C, 08193 
Bellaterra, Barcelona, Spain}
\email{\href{mailto:jortega@ub.edu}{\texttt{jortega@ub.edu}}}

\title[Stability of the Wehrl entropy in homogeneous polynomials]{Stability of the generalized Wehrl entropy and the local concentration of homogeneous polynomials}

\begin{abstract}
We study two notions of concentration for homogeneous polynomials of degree $N$ in $d+1$ complex variables on the unit sphere: a local notion measuring the fraction of the $L^2$-norm supported on a measurable subset; and  a global notion given by the generalized Wehrl entropy. In both cases, the extremizers are known to be reproducing kernels, that is, monomials up to a unitary rotation, by results of Lieb--Solovej. We establish stability results for both inequalities in higher dimensions. For the local concentration, we show that for sets of sufficiently small measure, the almost-maximizers are quantitatively close to reproducing kernels, both in the polynomial and in the domain, extending previous resuls in one dimension. For the generalized Wehrl entropy, we prove that for any non-linear convex function $\Phi$ and all sufficiently large degree $N$, the reproducing kernels are the unique minimizers up to stability, complementing recent results by Nicola--Riccardi--Tilli, which require a non-linearity condition near $1$ that exclude key examples such as the concentration functional. As a consequence, by passing to the large-$N$ limit, we recover stability results for both problems in the Bargmann--Fock space.
\end{abstract}

\maketitle
\tableofcontents

\section{Introduction}

Let $\mathbb P_N^d$ denote the space of homogeneous polynomials of degree $N$ in $d+1$ complex variables, in which any element can be represented by
$$Q(\zeta)=\sum_{|\alpha|=N} a_\alpha \zeta^\alpha, \quad \mbox{where } \zeta\in\C^{d+1}, \ \alpha\in\mathbb N_0^{d+1} \mbox{ and } \ a_\alpha\in\C.$$
We endow $\mathbb P_N^d$ with the inner product 
\begin{equation}\label{eq:innerprod}
    \langle P, Q\rangle_{\mathbb P_N^d}=\binom{N+d}{N}\int_{\partial\mathbb B} P(\zeta)\overline{Q(\zeta)}d\sigma(\zeta), \quad P,Q \in \mathbb P_N^d,
\end{equation}
where $d\sigma$ is the rotation-invariant  measure on the $(2d+1)$-dimensional  sphere $\partial \mathbb B$ with $\sigma(\partial \mathbb B)=1$. 
Hence, the corresponding norm for $Q\in\mathbb P^d_N$ is defined as
\begin{equation}\label{eq:norm}
    \|Q\|_{\mathbb P_N^d}^2=\binom{N+d}{N}\int_{\partial\mathbb B} |Q(\zeta)|^2d\sigma(\zeta).
\end{equation}
The prefactor $\binom{N+d}{N}$ is chosen so that $\|\zeta_k^N\|_{\mathbb P_N^d}=1$, $k\in\{1,\dots, d+1\}$ (see Lemma~\ref{lem:onb}).

The space $\mathbb P_N^d$ endowed with the  inner product \eqref{eq:innerprod} is a reproducing kernel Hilbert space with kernel
\begin{equation*}
    K_{N}(\zeta,\eta)=\sum_{|\alpha|=N} \frac{N!}{\alpha!}\zeta^\alpha\bar\eta^\alpha=(\zeta\cdot\bar\eta)^N, \quad \zeta, \eta\in\partial \mathbb B.
\end{equation*}
Then, for all $Q\in\mathbb P^d_N$ and $\eta\in\partial\mathbb B$
\begin{align*}
    Q(\eta)=\langle Q, K_N(\cdot, \eta)\rangle_{\mathbb P_N^d}=\binom{N+d}{N}\int_{\partial\mathbb B} Q(\zeta)(\bar \zeta\cdot\eta)^Nd\sigma(\zeta).
\end{align*}
We notice that the reproducing kernel for fixed $\eta\in\partial\mathbb B$ has unit norm, i.e. $\|K_N(\cdot,\eta)\|_{\mathbb P^d_N}^2 =K_N(\eta,\eta) =1$, and thus, if $Q\in \mathbb P_N^d$, with $\|Q\|_{\mathbb P^d_N} \le 1$, then 
\begin{align}\label{eq:supQ}
    \sup_{\zeta\in\partial \mathbb B} |Q(\zeta)| = |Q(\eta)| = \left|\left\langle Q, K_N(\cdot, \eta)\right\rangle_{\mathbb P_N^d}\right| \le 1.
\end{align}

This paper is devoted to the study of two notions for concentration of polynomials in $\mathbb P^d_N$. 
The first one is local and measures the fraction of the norm of a polynomial when it is restricted to a measurable subset: 

\begin{defi}
    Let $\Omega\subset \partial \mathbb B$ be a measurable set and $Q\in \mathbb P_N^d$. We define the \emph{concentration} of $Q$ in $\Omega$ to be
\begin{align*}
    C_{N, \Omega}(Q):=\frac{ \binom{N+d}{N}\int_{\Omega} |Q(\zeta)|^2d\sigma(\zeta)}{\|Q\|_{\mathbb P^d_N}^2}.
\end{align*}
\end{defi}
Given a size $0< \omega < 1$ and a degree $N\in \mathbb N$, we are interested in the sets $\Omega\subset \partial \mathbb{B}$ with  $\sigma(\Omega) = \omega$ and the polynomials $Q\in \mathbb P^d_N$ that maximize the concentration $C_{N,\Omega}(Q)$.

\begin{rmk}\label{rmk:opt_superlevel}
Observe that if we are interested in maximizing the concentration of a polynomial $Q\in\mathbb P^d_N$ over all sets $\Omega$ with a prefixed size $\omega = \sigma(\Omega)$, it is always optimal to take $\Omega$ to be a superlevel set $\Omega_t$ of $|Q|^2$, i.e.
\begin{align}\label{eq:Omegat}
\Omega_t := \{\zeta\in \partial \mathbb B: |Q(\zeta)|^2>t\},
\end{align} 
with $t$ selected so that $\sigma(\Omega_t) = \omega$. With this choice $C_{N,\Omega_t} (Q) \ge C_{N,\Omega}(Q)$ for any other $\Omega$ of the same size $\omega$.
\end{rmk}

The other notion of the concentration of a polynomial that we will consider is global and provided by the generalized Wehrl entropy:
\begin{defi}\label{def:entropy}
Let $\Phi:[0,1]\to\mathbb R$ be a convex function. The \emph{generalized Wehrl entropy} for any $Q\in\mathbb P^d_N$ is defined as follows:
\begin{align*}
    S_{N,\Phi}(Q)=-\binom{N+d}{d}\mathlarger\int_{\partial\mathbb B} \Phi\left(\frac{|Q(\zeta)|^2}{\|Q\|_{\mathbb P^d_N}^2}\right)d\sigma(\zeta).
\end{align*}
\end{defi}
The generalized Wehrl entropy on homogeneous polynomials was initially studied by Lieb and Solovej in \cite{LiebSolovej16} (see also \cite{LiebSolovej14} for $\mathbb P^1_N$), where they proved that the reproducing kernels are minimizers for $S_{N,\Phi}$ for any convex function $\Phi$:
\begin{thm}[\cite{LiebSolovej16}] \label{thm:LS16}
Let $\Phi:[0,1]\to \mathbb R$ be a convex function. Then for any $Q\in\mathbb P^d_N$ with $\|Q\|_{\mathbb P^d_N}=1$ we have
\[
S_{N,\Phi}(Q) \ge S_{N,\Phi}(\zeta_1^N).\]
\end{thm}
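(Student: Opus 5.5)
We will prove the theorem through the distribution function of $u=|Q|^2$ on the sphere, in the style of the Nicola--Tilli proof of the Faber--Krahn inequality for the STFT. Since $\Phi$ is convex on $[0,1]$, it suffices to show that the decreasing rearrangement of $u=|Q|^2$ is majorized by that of $u_0=|\zeta_1^N|^2$. By the layer-cake representation, $S_{N,\Phi}$ depends only on the distribution function $\mu(t)=\sigma(\{u>t\})$. The statement we aim for is
\begin{equation*}
\int_{\Omega}u\,d\sigma\le \int_{\{u_0>s\}} u_0\,d\sigma\qquad\text{whenever }\sigma(\Omega)=\sigma(\{u_0>s\}).
\end{equation*}
Together with the equal total mass $\int u\,d\sigma=\int u_0\,d\sigma=\binom{N+d}{N}^{-1}$, this is a Hardy--Littlewood--P\'olya majorization. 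Karamata's inequality then gives $\int\Phi(u)\,d\sigma\le\int\Phi(u_0)\,d\sigma$, that is $S_{N,\Phi}(Q)\ge S_{N,\Phi}(\zeta_1^N)$. By Remark~\ref{rmk:opt_superlevel}, it is enough to compare the concentrations on the superlevel sets $\Omega_t$.

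We first compute the model quantities. For $u_0=|\zeta_1|^{2N}$, the function $|\zeta_1|^2$ on $\partial\mathbb B$ has the Beta$(1,d)$ law, so $\sigma(\{|\zeta_1|^2>r\})=(1-r)^d$. This gives the explicit distribution function $\mu_0(t)=(1-t^{1/N})^d$. We then define $I_0(s)$ as the integral of $u_0$ over the superlevel set of measure $s$; it is explicit and satisfies the ODE-type identity $I_0'(s)=t_0(s)$, where $t_0=\mu_0^{-1}$.

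For general $Q$, set $I(s)=\int_{\{u>\mu^{-1}(s)\}}u\,d\sigma$, so that $I'(s)=\mu^{-1}(s)=:t(s)$ and $I$ is concave. The key differential inequality is a bound on $-\mu'(t)$, obtained from the coarea formula and Cauchy--Schwarz:
\begin{equation*}
-\mu'(t)=\int_{\{u=t\}}\frac{dH}{|\nabla u|},\qquad H(\{u=t\})^2\le(-\mu'(t))\int_{\{u=t\}}|\nabla u|\,dH .
\end{equation*}
We would then combine three ingredients:
\begin{itemize}
\item the isoperimetric inequality on the sphere $S^{2d+1}$, applied to $H(\{u=t\})$;
\item a pointwise estimate on $\Delta\log u$, coming from plurisubharmonicity: $\log|Q|^2$ is $N$ times a quasi-psh function, whose Laplacian on the sphere is controlled by a constant depending on $N$ and $d$;
\item the identity $\int_{\{u=t\}}|\nabla u|\,dH=-\int_{\{u>t\}}\Delta u\,d\sigma$, with $\Delta u$ expressed through $\Delta\log u$ and $|\nabla u|^2/u$.
\end{itemize}
These should yield an inequality of the form $-\mu'(t)\ge F(t,\mu(t))$ that is an equality exactly for $u_0$. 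Rewritten in terms of $I$, it becomes a comparison $I(s)\le I_0(s)$, proved by a maximum-principle argument on the function $I-I_0$. This uses $I(0)=I_0(0)=0$ and $I(1)=I_0(1)$, and the bound $\sup u\le1$ from \eqref{eq:supQ}.

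The main obstacle is the differential inequality itself. Unlike the Fock case, the level sets live in the real $(2d+1)$-sphere, and the symmetry is complex rather than real. The Euclidean-type isoperimetric inequality on $S^{2d+1}$ is not sharp for the extremal level sets $\{|\zeta_1|>r\}$, which are tubes around a great circle rather than caps. For this reason, I would not apply the isoperimetric inequality on $S^{2d+1}$ directly. Instead, I would pass to the quotient $\mathbb{CP}^d$ via the Hopf fibration, where $|Q|^2$ descends because it is $S^1$-invariant. There the extremal sets are geodesic balls, and the Fubini--Study isoperimetric inequality, together with $\Delta_{FS}\log u\ge -c_{d}N$ (from the curvature of $\mathcal O(N)$), should give exactly the equality case. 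If this step fails, the fallback is Lieb--Solovej's original route: write $\Phi$ as a limit of functions $t\mapsto (t-s)_+$ and prove the majorization through their $SU(d+1)$-representation argument, reducing to the case $d=1$ by induction.
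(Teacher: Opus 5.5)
Your reduction to a Hardy--Littlewood--P\'olya majorization and the computation $\mu_0(t)=(1-t^{1/N})^d$ are fine, and so is the overall Nicola--Tilli/Kulikov-type scheme. For $d\ge 2$, however, the scheme breaks down at exactly the step you flag, and passing to $\mathbb{CP}^d$ does not repair it.

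To get a differential inequality $-\mu'(t)\ge F(t,\mu(t))$ that is sharp for $u_0$, you need the isoperimetric inequality with the geodesic-ball profile, $|\partial\mathcal W|^2\ge H(m(\mathcal W))$ with $H(x)=C_dx^{2-1/d}(1-x^{1/d})$, at every volume $x=\mu(t)\in(0,1)$. For $d\ge2$ this is false near $x=1$. A geodesic ball of volume $x$ has as complement a thin tube around a hyperplane $\mathbb{CP}^{d-1}$, so its boundary measure is $\asymp(1-x)^{1/2}$. The complement of a small geodesic ball of volume $1-x$ has boundary measure $\asymp(1-x)^{1-1/(2d)}$, which is much smaller. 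Balls are isoperimetric only below a threshold $\tilde\omega<1$ (Proposition~\ref{prop:isop}). Since the superlevel sets of $u_0$ for small $t$ are precisely such large balls, no isoperimetric inequality valid for general sets can make the step sharp there.

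Consequently the differential inequality, equivalently the monotonicity of $\rho(s)=\mu^{-1}(s)/\mu_0^{-1}(s)$, is available only for $s\in(0,\tilde\omega)$ (Lemma~\ref{lem:on_mu}). Your maximum-principle comparison needs the sign pattern of $I'-I_0'=\mu^{-1}-\mu_0^{-1}$ on all of $(0,1)$, in order to connect $I(0)=I_0(0)$ with $I(1)=I_0(1)$. On $(0,\tilde\omega)$ you only learn that $\mu^{-1}-\mu_0^{-1}$ changes sign at most once. Once a crossing $s^*<\tilde\omega$ occurs, nothing prevents $I>I_0$. So your argument proves the theorem only for $d=1$, where $\mathbb{CP}^1$ is a round sphere and caps are isoperimetric for every area.

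A separate, smaller issue: writing $\Delta u=u\Delta\log u+|\nabla u|^2/u$ and discarding the gradient term already loses sharpness for $u_0$. The sharp identity is $\int_{\{u=t\}}|\nabla u|\,dH=-t\int_{\{u>t\}}\Delta\log u\,d\sigma$.

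The paper does not prove this statement at all: it is imported from Lieb--Solovej. Section~\ref{subsec:monotonicity} explains exactly the obstruction above, following \cite{Frank23}. The theorem is then used, through Proposition~\ref{prop:ineqLS}, precisely to cover what the isoperimetric method cannot reach, e.g.\ in Case~2 of Lemma~\ref{lem:upper1} and in the proof of Theorem~\ref{thm:Wehrl}.

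Your fallback is likewise only a pointer to Lieb--Solovej. Reducing $\Phi$ to the functions $(t-s)_+$ is standard, since the total mass is fixed and affine parts cancel. But no argument for the resulting majorization is supplied, and the claimed induction on $d$ is unsupported. As written, the proposal is not a proof for $d\ge 2$.
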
 

The estimate for the generalized Wehrl entropy also implies an estimate for  the (local) concentration $C_{N,\Omega}(Q)$:  Let us  consider  for any  $0<t_0<1$ the convex function
\begin{align*}
    \Phi_{t_0}(t) =\begin{cases}
        0 & \mbox{ if } 0\leq t < t_0,\\
        t-t_0 & \mbox{ if } t_0\leq t\leq 1.
    \end{cases}
\end{align*}
For any $Q\in\mathbb P^d_N$, let $\Omega_t$ denote its superlevel sets, as in \eqref{eq:Omegat}. Notice 
that 
$$S_{N,\Phi_{t_0}}(Q)=-C_{N, \Omega_{t_0}}(Q)+\binom{N+d}{d} t_0\sigma(\Omega_{t_0}).$$
Given $\omega\in(0,1)$, take $t_0$ such that $\omega=\min\{\sigma(\Omega_{t_0}), \sigma(\Omega_{t_0}^*)\}$, where  $\Omega^*_t=\{\zeta\in\partial\mathbb B: |\zeta_1|^{2N}>t\}$.
Then, combining Theorem~\ref{thm:LS16} for $\Phi_{t_0}$ with Remark~\ref{rmk:opt_superlevel}, one can infer:

\begin{cor}\label{cor:LS16}
Let $\omega \in(0, 1)$. For  any measurable  set $\Omega\subset \partial \mathbb B$ with $\sigma(\Omega) = \omega$ and any $Q\in \mathbb P^d_N$, it holds
\[
C_{N,\Omega}(Q) \le C_{N, \Omega^*} (\zeta_1^N),
\]
where $\Omega^* =\{\zeta\in\partial\mathbb B: |\zeta_1|^2>\big(1-\omega^{1/d}\big)\}$, i.e. $\sigma (\Omega^*) = \omega$.
\end{cor}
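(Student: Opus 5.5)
We derive Corollary~\ref{cor:LS16} from Theorem~\ref{thm:LS16} applied to the piecewise linear convex function $\Phi_{t_0}$, together with Remark~\ref{rmk:opt_superlevel}. Both sides are invariant under scaling $Q$, so we normalize $\|Q\|_{\mathbb P^d_N}=1$. By Remark~\ref{rmk:opt_superlevel} we may also replace $\Omega$ by a superlevel set of $|Q|^2$ of measure $\omega$; this only increases the left-hand side.

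\emph{Step 1: the extremal set.} For the monomial, the distribution of $|\zeta_1|^2$ under $\sigma$ on the $(2d+1)$-sphere has density $d(1-s)^{d-1}$ on $[0,1]$. Hence $\sigma(\{|\zeta_1|^2>s\})=(1-s)^d$, and $\Omega^*$ as defined has measure $\omega$. Writing $\Omega^*_t=\{|\zeta_1|^{2N}>t\}$, the map $t\mapsto\sigma(\Omega_t^*)=(1-t^{1/N})^d$ is continuous and strictly decreasing from $1$ to $0$. So $\Omega^*=\Omega^*_{t^*}$ for the unique $t^*$ with $\sigma(\Omega^*_{t^*})=\omega$.

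\emph{Step 2: the entropy identity and comparison.} Set $c=\binom{N+d}{d}$. For any unit-norm $P$ and any $t_0$, we have $S_{N,\Phi_{t_0}}(P)=-C_{N,\Omega_{t_0}(P)}(P)+c\,t_0\,\sigma(\Omega_{t_0}(P))$. Define $F_P(\Omega)=c\int_\Omega(|P|^2-t_0)\,d\sigma$. For every measurable $\Omega$, $F_P(\Omega)\le F_P(\Omega_{t_0}(P))=-S_{N,\Phi_{t_0}}(P)$, since the integrand is positive exactly on the superlevel set. Take $t_0=t^*$ and any $\Omega$ with $\sigma(\Omega)=\omega$. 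Theorem~\ref{thm:LS16} then gives
\[
C_{N,\Omega}(Q)-c\,t^*\omega=F_Q(\Omega)\le -S_{N,\Phi_{t^*}}(Q)\le -S_{N,\Phi_{t^*}}(\zeta_1^N)=C_{N,\Omega^*}(\zeta_1^N)-c\,t^*\omega.
\]
The last equality uses $\Omega_{t^*}(\zeta_1^N)=\Omega^*$ and $\sigma(\Omega^*)=\omega$. Cancelling $c\,t^*\omega$ gives the claim.

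\emph{Expected obstacle.} The point needing care is the choice of $t_0$. The paper's formulation, with $\omega=\min\{\sigma(\Omega_{t_0}),\sigma(\Omega_{t_0}^*)\}$, suggests matching the level set of $Q$, which may fail to exist when $|Q|^2$ has level sets of positive measure. The route above avoids this entirely. Fixing $t_0$ by the extremal profile, which is continuous, and using the elementary bathtub inequality $F_Q(\Omega)\le F_Q(\Omega_{t_0})$ for arbitrary $\Omega$ removes any need to know the measure of the superlevel sets of $Q$. So the only real inputs are the explicit distribution of $|\zeta_1|^2$ and Theorem~\ref{thm:LS16}.
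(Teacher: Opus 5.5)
Your proof is correct and uses the same ingredients as the paper: Theorem~\ref{thm:LS16} for $\Phi_{t_0}$, the identity $S_{N,\Phi_{t_0}}(P)=-C_{N,\Omega_{t_0}}(P)+\binom{N+d}{d}t_0\,\sigma(\Omega_{t_0})$, and the bathtub principle behind Remark~\ref{rmk:opt_superlevel}. The one genuine difference is the choice of $t_0$, and there your version is the better one.

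The paper takes $t_0$ with $\omega=\min\{\sigma(\Omega_{t_0}),\sigma(\Omega^*_{t_0})\}$. This is the smaller of the two levels (of $|Q|^2$ and of $|\zeta_1|^{2N}$) whose superlevel set has measure $\omega$. You always take the extremal level $t^*=(1-\omega^{1/d})^N$.

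Consider $h(t)=\int_{\partial\mathbb B}(|\zeta_1|^{2N}-t)_+\,d\sigma+t\omega$. It is convex with $h'(t)=\omega-\sigma(\Omega^*_t)$, so it is minimized exactly at your $t^*$, where it equals $\int_{\Omega^*}|\zeta_1|^{2N}d\sigma$. This is why your choice closes in every case.

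By contrast, the paper's rule can pick the level of $|Q|^2$, so that $\sigma(\Omega_{t_0})=\omega<\sigma(\Omega^*_{t_0})$. This happens for every $Q$ with $T=\sup_{\partial\mathbb B}|Q|^2<1$ as soon as $(1-\omega^{1/d})^N>T$. In that case Theorem~\ref{thm:LS16} at that $t_0$ only yields
\[
C_{N,\Omega_{t_0}}(Q)\le C_{N,\Omega^*}(\zeta_1^N)+\binom{N+d}{d}\int_{\Omega^*_{t_0}\setminus\Omega^*}\big(|\zeta_1|^{2N}-t_0\big)\,d\sigma(\zeta).
\]
The error term here is strictly positive, so one has to move to $t^*$ anyway.

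Two minor corrections. First, the obstacle you name is not the real one. For $N\ge 1$ the level sets of $|Q|^2$ are $\sigma$-null (see the beginning of Section~\ref{sec:superlevelsets}). Hence the distribution function $\mu$ of $|Q|^2$ is continuous, and a superlevel set of measure exactly $\omega$ always exists. The actual issue is the direction of the inequality described above. Second, your opening reduction to a superlevel set of $|Q|^2$ is never used, since Step~2 handles an arbitrary $\Omega$ directly; you can drop it.
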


In dimension one, the uniqueness and stability of the optimizers for both inequalities are completely understood 
and can be found in \cites{KNOCT,Frank23} and \cite{GFOC25}, respectively.
In the case of higher dimensions, as it was first considered in \cite{LiebSolovej16}, Nicola, Riccardi and Till, in \cite{NRT25} (see also \cite{NRT25b} for an alternative proof), have proved  the uniqueness and stability of the optimizers for the generalized Wehrl entropy  $S_{N,\Phi}$ under the condition that $\Phi$ is non-linear in $(1-\epsilon, 1)$ for every $\epsilon$. This requirement precludes the application of their results in the case of entropies defined by convex functions such as $\Phi_{t_0}$. Therefore, it does not allow to infer the uniqueness or stability for the maximizers of the concentration operator.

Our first result addresses this shortcoming: In every dimension $d$, there is a threshold $0<\tilde{\omega}\le 1$ such that if $\omega < \tilde\omega$ the concentration operator for domains $\Omega$ such that $\sigma(\Omega) = \omega$ has only the multiples of reproducing kernels as maximizers, and they are stable, both in the function and in the domain. See Theorem~\ref{thm:concentration} for the precise statement. 

By rescaling this result as $N$ gets bigger, we obtain the stability of the concentration operators in the Bargmann-Fock space for domains of any size in the Euclidean space, see Section~\ref{sec:Fock}. This result was originally proved in \cite{GGRT}.

Our second main result concerns the stability of the minimizers of the generalized Wehrl entropy $S_{N,\Phi}$ for an arbitrary non-linear convex $\Phi$. We prove that given any such $\Phi$, there is a positive integer $N_\Phi$ (depending also on $d$) such that if $N\ge N_\Phi$, the unique minimizers of the entropy are the reproducing kernels  (see Theorem~\ref{thm:Wehrl}). Again, this result complements the results in \cites{NRT25, NRT25b} and passing to the limit as $N\to\infty$ we recover the stability results for the generalized Wehrl entropy in the Bargmann-Fock space proved originally in \cite{FNT25}. 

\begin{rmk} Observe that the previous articles, \cites{GFOC25, LiebSolovej16, NRT25, NRT25b},  present the results in terms of either  spaces of holomorphic polynomials of bounded degree or the symmetric irreducible  representations of the group $SU(d+1)$. The correspondence with the spaces of homogeneous polynomials where we are presenting our results  is introduced in Sections~\ref{subsec:irred} and~\ref{subsec:otherref}, respectively.
\end{rmk}

\subsection{Main results}
Before stating the previously announced results, we introduce some definitions on how to measure the distance of the polynomials and the domains to the optimal ones. 

For any $Q\in\mathbb P^d_N$ with $\|Q\|_{\mathbb P^d_N}=1$, we define its \emph{distance to the reproducing kernels} as:
\begin{equation}\label{eq:D_NQ}
    D_{N}(Q)=\min_{\eta \in\partial \mathbb B}\big\{\|Q-K_N(\cdot, \eta)\|_{\mathbb P^d_N}\big\}.
\end{equation}
In the case that we consider $Q\in\mathbb P^d_N$ with $\|Q\|_{\mathbb P^d_N}\neq 1$, we define $D_N(Q)$ as the distance to the reproducing kernels of the normalized polynomial $\frac{Q}{\|Q\|_{\mathbb P^d_N}}$.

The optimal domains for the concentration will be the superlevel sets of the reproducing kernels $K_N(\cdot,\eta)$, which are \emph{complex spherical caps} defined by:
\begin{align}\label{eq:Omega_zeta0}
    \Delta_t(\eta)=\big\{\zeta\in\partial\mathbb B: |\zeta\cdot\overline{\eta}|^N>t\big\},
\end{align} 
where $t\in(0,1)$ and $\eta\in\partial\mathbb B$. 
Notice that  $\sigma\big(\Delta_t(\eta)\big)=(1-t^{1/N})^d$.
We will measure the distance of any set $\Omega\subset\partial\mathbb B$ to the complex spherical caps of the same size in terms of the \emph{Fraenkel asymmetry} given by: 
\begin{equation*}
    \mathcal A_{\sigma}(\Omega):=\inf_{\eta\in\partial\mathbb B}
    \left\{\frac{\sigma\big(\Omega\backslash\Delta_t(\eta)\big)+
    \sigma\big(\Delta_t(\eta)\backslash\Omega\big)} {\sigma(\Omega)}\right\}, 
\end{equation*}
where $\sigma\big(\Delta_t(\eta)\big)=\sigma(\Omega)$,  i.e. $t=(1-\sigma(\Omega)^{1/d})^{N}$.

\begin{thm}\label{thm:concentration}
    For any $d\in\mathbb N$, there exist  constants $C>0$  and $\tilde \omega\in(0,1]$ (independent of $N$) such that for any measurable set $\Omega\subset\partial \mathbb B$ with $\sigma(\Omega)\in(0,\tilde\omega)$  and any $Q\in\mathbb{P}_N^d$ with $\|Q\|_{\mathbb{P}_N^d}=1$, there holds
    \begin{align}\label{eq:conc_polyn}
        D_{N}(Q)^2
        &\leq 
        \frac{C}{N^d\int_{\sigma(\Omega)}^{\tilde\omega}(1-s^{1/d})^Nds}
        \left(1-\frac{C_{N,\Omega}(Q)}{C_{N,\Omega^*}(\zeta_1^N)}\right),
        \\
        \label{eq:conc_set}
        \mathcal A_\sigma(\Omega)^2
        &\leq \frac{C}{\alpha\big(\sigma(\Omega)\big)}\left(1-\frac{C_{N,\Omega}(Q)}{C_{N,\Omega^*}(\zeta_1^N)}\right),
    \end{align}
    where
    \begin{align}\label{eq:Omega*}
        \Omega^*
        &=\big\{\zeta\in\partial\mathbb B: |\zeta_1|>\big(1-\sigma(\Omega)^{1/d}\big)^{N}\big\}.
        \\
        \label{eq:alpha_omega}
        \alpha(\omega)
        &={N^d\omega^2(1-\omega^{1/d})^{N-1}\int_{\omega}^{\tilde\omega}(1-s^{1/d})^N ds}.
\end{align}
\end{thm}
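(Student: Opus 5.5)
We follow the level-set strategy of the one-dimensional stability proofs (Frank, Nicola--Tilli, Gómez-Guerra-Ramos-Tilli), adapted to $\partial\mathbb B\subset\C^{d+1}$. By Remark~\ref{rmk:opt_superlevel} we may replace $\Omega$ by the superlevel set $\Omega_{t}$ of $u=|Q|^2$ (normalized $\|Q\|=1$) with $\sigma(\Omega_t)=\sigma(\Omega)$. The control of the set $\Omega$ itself, \eqref{eq:conc_set}, follows at the end from the polynomial estimate together with the gap $C_{N,\Omega_t}(Q)-C_{N,\Omega}(Q)$.

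\textbf{Step 1: distribution function inequality.} Let $\mu(s)=\sigma(\{u>s\})$ for $s\in(0,1)$. Using the coarea formula and the Lieb--Solovej type isoperimetric/log-subharmonicity argument on the sphere (the Fubini--Study Laplacian of $\log u$ is controlled, since $\log|Q|^2$ is plurisubharmonic of degree $N$), we derive a differential inequality for $\mu$ of the form
\[
-\mu'(s)\ \ge\ \frac{d}{N}\,\frac{\mu(s)^{(d-1)/d}}{s}\,\bigl(1-\mu(s)^{1/d}\bigr)\cdot(\text{correction}),
\]
with equality exactly for $u=|\zeta_1|^{2N}$, where $\mu^*(s)=(1-s^{1/N})^d$. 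Integrating gives $\mu(s)\le\mu^*(s)$ for all $s$, together with $\max u\le 1$ from \eqref{eq:supQ}. Writing $C_{N,\Omega_t}(Q)=\binom{N+d}{d}\bigl(t\mu(t)+\int_t^1\mu(s)\,ds\bigr)$ and doing the same for $\zeta_1^N$, we express the deficit as $\binom{N+d}{d}\int_t^1(\mu^*-\mu)\,ds$ up to the boundary term.

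\textbf{Step 2: linking the deficit to $\max u$.} The key quantitative point is that if $u_{\max}=\max u=1-\delta$, then the inequality started from $u_{\max}$ forces $\mu(s)\le\mu^*(s/u_{\max})$. Hence
\[
\int_t^1(\mu^*-\mu)\,ds\gtrsim \delta\int_{t}^{\tilde t}\partial_s\!\bigl(s\,\mu^*(s)\bigr)\ldots\ \gtrsim\ \delta\, N^{d}\!\int_{\sigma(\Omega)}^{\tilde\omega}(1-s^{1/d})^N\,ds\Big/\binom{N+d}{d},
\]
after changing variables $s\mapsto\mu^*(s)$. This is where the smallness $\sigma(\Omega)<\tilde\omega$ enters: the comparison $\mu^*(s)-\mu^*(s/(1-\delta))\gtrsim \delta\,s|\partial_s\mu^*|$ must be uniform in $N$ on the relevant range, and the concavity/convexity of $s\mapsto\mu^*$ in the rescaled variable holds only for small volumes. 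Finally $\delta=1-|\langle Q,K_N(\cdot,\eta)\rangle|^2\ge \tfrac12 D_N(Q)^2$ at the maximizing $\eta$, which gives \eqref{eq:conc_polyn}. Dividing by $C_{N,\Omega^*}(\zeta_1^N)\le1$ only helps.

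\textbf{Step 3: the set.} Write $\Omega=(\Omega\cap\Omega_t)\cup(\Omega\setminus\Omega_t)$. Since $u\le t$ outside $\Omega_t$ and $u>t$ inside, the deficit is at least $\int_{\Omega_t\triangle\Omega}|u-t|$. A layer-cake bound using $-\mu'(t)\gtrsim\mu(t)(1-\mu^{1/d})N^{-1}t^{-1}\ldots$ gives this $\gtrsim t\,|\mu'(t)|^{-1}\sigma(\Omega\triangle\Omega_t)^2$. Next we compare $\Omega_t$ with $\Delta_t(\eta)$: since $\|Q-K_N(\cdot,\eta)\|\lesssim D_N(Q)$, we have $\bigl||Q|^2-|K_N|^2\bigr|\lesssim D_N(Q)$ pointwise, so $\sigma(\Omega_t\triangle\Delta_t(\eta))\lesssim D_N(Q)\,|\mu^{*\prime}(t)|$. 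Combining these with Step 2 and the explicit values $t=(1-\omega^{1/d})^N$ and $|\mu^{*\prime}|$ produces the factor $\alpha(\omega)$ in \eqref{eq:alpha_omega}.

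The main obstacle is Step 1--2: obtaining an $N$-uniform quantitative form of the Lieb--Solovej rearrangement inequality in dimension $d>1$. The geodesic/level-set isoperimetric inequality on $\mathbb{CP}^d$ is not sharp, so we would instead use the plurisubharmonic restriction to complex lines through the maximum point to reduce to the $d=1$ estimates of \cite{GFOC25}, and then integrate over lines. The restriction to $\tilde\omega$ comes from exactly that reduction.
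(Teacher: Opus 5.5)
Your outline has the right overall shape (level sets, a differential inequality from isoperimetry, reduction to $1-T$ through $D_N(Q)^2=2(1-\sqrt T)$, then a separate argument for the set). However, the key quantitative step rests on an inequality in the wrong direction, so there is a genuine gap.

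\textbf{The main gap: the bound on $\mu$ points the wrong way.} The small-volume isoperimetric inequality on $\mathbb{CP}^d$ gives, via \cite{KNOCT}, $\mu'(t)\le-\frac{d}{Nt}\mu(t)^{1-1/d}\bigl(1-\mu(t)^{1/d}\bigr)$, with equality for $\mu_0$. Integrating downward from $T=\max u$, where $\mu(T)=0$, yields $\mu(t)\ge\mu_0(t/T)$. This is a \emph{lower} bound, and it holds only where $\mu(t)\le\tilde\omega$. It gives neither $\mu\le\mu_0$ nor $\mu(s)\le\mu_0(s/u_{\max})$. The first of these is in fact false for every non-extremal $Q$, since $\int_0^1\mu=\int_0^1\mu_0=\binom{N+d}{d}^{-1}$. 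What the isoperimetric inequality really gives is qualitative: $\mu^{-1}/\mu_0^{-1}$ is non-decreasing on $(0,\tilde\omega)$, and $\mu-\mu_0$ changes sign only once there.

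To make the deficit linear in $1-T$ you need the opposite bound, $\mu(t)\le\bigl(1+C_0(1-T)\bigr)\mu_0(t/T)$, for $t$ in a fixed interval below $T$ and $T$ close to $1$. This is Lemma~\ref{lem:super-level}, the main technical input, and the paper proves it by a separate perturbative argument:
\begin{itemize}
\item write $Q=\sqrt T\zeta_1^N+\epsilon\tilde Q$ with $\epsilon^2=1-T$, where criticality at the maximum removes the $\zeta_1^{N-1}\zeta_j$ terms from $\tilde Q$;
\item show that $\{U>t\}$ is contained in sets that are star-shaped in $\rho_1$;
\item expand their measure to second order in an auxiliary parameter, where the first-order term vanishes by orthogonality in $\theta_1$.
\end{itemize}
Without such an upper bound, the only available lower bounds for the deficit are of the type $\int_T^1\mu_0\approx(1-T)^{d+1}N^{-d}$, which destroys the sharp exponent in $D_N(Q)^2$. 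The same upper bound is also what keeps the crossing point $t^*$ below a universal $T^*<1$ (Lemma~\ref{lem:uniquet*}). You need that to restrict to the region where $\mu_0-\mu\ge0$ and conclude $\int_{t^*}^1(\mu_0-\mu)\,dt\ge CN^{-d}(1-T)$.

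\textbf{Secondary points.}
\begin{itemize}
\item The threshold $\tilde\omega$ comes from the isoperimetric profile of $\mathbb{CP}^d$: geodesic balls are isoperimetric only for small volumes. It does not come from convexity of $\mu_0$.
\item The factor $\int_{\sigma(\Omega)}^{\tilde\omega}(1-s^{1/d})^N\,ds$ arises when the deficit at $\hat s=\sigma(\Omega)$ is transferred to $\int_0^{s^*}(\mu_0^{-1}-\mu^{-1})$. This uses the monotonicity of $\mu^{-1}/\mu_0^{-1}$ on $(0,\tilde\omega)$ together with the Lieb--Solovej inequality, treated separately for $s^*<\tilde\omega$ and $s^*>\tilde\omega$.
\item Your fallback of restricting to complex lines through the maximum does not reduce to \cite{GFOC25}. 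In polar coordinates about a point, the measure has radial weight $r^{2d-1}(1+r^2)^{-d-1}\,dr$ rather than the $\mathbb{CP}^1$ weight $r(1+r^2)^{-2}\,dr$, and the slices carry no normalization. Also, the isoperimetric inequality on $\mathbb{CP}^d$ is sharp for small volumes, and that is exactly what one uses.
\item In Step 3 the bathtub identity is fine. But bounding $\int_{\Omega\triangle\Omega_t}|u-t|$ below by $\sigma(\Omega\triangle\Omega_t)^2/|\mu'(t)|$ requires an \emph{upper} bound on $\sigma(\{|u-t|<h\})$, not the lower bound on $-\mu'$ you invoke. This can be repaired by comparing with $TU_0$, using $|U-TU_0|\le C\epsilon$ and the explicit $\mu_0'$. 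That is what the paper does, via a transport map and the set $\Lambda$.
\end{itemize}
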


\begin{rmk}
    The threshold $\tilde\omega$ on the size of the sets in the statement of Theorem~\ref{thm:concentration} is due to the fact that, in the complex projective space,  balls are the isoperimetric domains only for small volumes (see the beginning of Section~\ref{subsec:monotonicity} and Proposition~\ref{prop:isop} for further details). Nevertheless, balls are always the (unique) isoperimetric domains in $\mathbb C\mathbb P^1$. Therefore, for $d=1$, we have $\tilde \omega=1$ and no restriction on the size of $\Omega$ appears, as  proved in \cite{GFOC25}. 
\end{rmk}

\begin{thm}\label{thm:Wehrl}
  Let $\Phi:[0,1]\to\mathbb R$ be a  convex, non-linear function and let $d\in\mathbb N$.  Then there exist  constants $C > 0$  and $N_\Phi\in\mathbb N$ (depending only on $\Phi$ and $d$) such that for any $N\geq N_\Phi$, the following holds: 
  Let $Q\in\mathbb P^d_N$ with $\|Q\|_{\mathbb P^d_N}=1$, then
  \begin{align*}
      D_N(Q)^2\leq C\left(S_{N,\Phi}(Q)-S_{N,\Phi}(\zeta_1^N)\right).
  \end{align*}
\end{thm}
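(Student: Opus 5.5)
We follow the scheme of Frank--Nicola--Tilli type arguments. Normalize $\|Q\|_{\mathbb P^d_N}=1$ and set $u(\zeta)=|Q(\zeta)|^2$, with distribution function $\mu(t)=\sigma(\{u>t\})$. By the layer-cake formula,
\[
S_{N,\Phi}(Q)=-\binom{N+d}{d}\Bigl(\Phi(0)+\int_0^1 \Phi'(t)\,\mu(t)\,dt\Bigr),
\]
and integrating by parts once more writes the entropy as a superposition of the functionals $\Phi_{t_0}$:
\[
S_{N,\Phi}(Q)=-\binom{N+d}{d}\Phi(0)-\binom{N+d}{d}\Phi'(0)\,\sigma\text{-}\!\int u\;-\int_0^1 \binom{N+d}{d}\int_{t_0}^1\mu(t)\,dt\,d\Phi'(t_0).
\]
The first two terms are the same for every normalized $Q$, since $\binom{N+d}{d}\int u\,d\sigma=1$. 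Hence
\[
S_{N,\Phi}(Q)-S_{N,\Phi}(\zeta_1^N)=\int_0^1 \bigl(G^*(t_0)-G_Q(t_0)\bigr)\,d\Phi'(t_0),
\]
where $G_Q(t_0)=\binom{N+d}{d}\int_{t_0}^1\mu(t)\,dt=-S_{N,\Phi_{t_0}}(Q)+\text{const}$, and $G^*$ is the analogous quantity for $\zeta_1^N$. Theorem~\ref{thm:LS16}, applied to $\Phi_{t_0}$, gives $G^*-G_Q\ge 0$ pointwise. Since $\Phi$ is convex and non-linear, the measure $d\Phi'$ is nonzero, so it gives mass $m>0$ to some compact interval $[a,b]\subset(0,1)$. (If the mass sits only at the endpoints, we instead use a neighbourhood of an interior point in the support of $d\Phi'$; some care is needed if the support is $\{0\}$ or $\{1\}$, but an atom at $1$ is harmless since $u\le 1$.) We therefore reduce to a lower bound
\[
G^*(t_0)-G_Q(t_0)\ge c\,D_N(Q)^2\qquad\text{for } t_0\in[a,b],\ N\ge N_\Phi,
\]
with $c$ independent of $N$ and $Q$.

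To get this bound we relate $G^*(t_0)-G_Q(t_0)$ to the concentration deficit at the level set of size $\omega_Q(t_0)=\mu(t_0)$. Note that $G_Q(t_0)=C_{N,\Omega_{t_0}}(Q)-\binom{N+d}{d}t_0\mu(t_0)$. Comparing $Q$ with $\zeta_1^N$, and using that $\omega\mapsto C_{N,\Omega^*_\omega}(\zeta_1^N)-\binom{N+d}{d}t_0\omega$ is maximized where the threshold of $\zeta_1^N$ equals $t_0$, we obtain
\[
G^*(t_0)-G_Q(t_0)\ \ge\ C_{N,\Omega^*}(\zeta_1^N)-C_{N,\Omega_{t_0}}(Q)\ \ge\ C_{N,\Omega^*}(\zeta_1^N)\Bigl(1-\tfrac{C_{N,\Omega_{t_0}}(Q)}{C_{N,\Omega^*}(\zeta_1^N)}\Bigr),
\]
where $\Omega^*$ has size $\mu(t_0)$. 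We then apply \eqref{eq:conc_polyn} of Theorem~\ref{thm:concentration}.

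The key point is that for fixed $t_0\in[a,b]$ the level sets are tiny. For $\zeta_1^N$ one has $\sigma(\Omega^*_{t_0})=(1-t_0^{1/N})^d\approx(\log(1/t_0)/N)^d$. Likewise, for $Q$, the bound $\mu(t_0)\le t_0^{-1}\int u\,d\sigma\lesssim N^{-d}$ makes $\mu(t_0)<\tilde\omega$ once $N\ge N_\Phi$. Substituting $s=(\tau/N)^d$ in the prefactor of \eqref{eq:conc_polyn} gives
\[
N^d\int_{\mu(t_0)}^{\tilde\omega}(1-s^{1/d})^N\,ds\approx d\int_{N\mu(t_0)^{1/d}}^{N\tilde\omega^{1/d}}\tau^{d-1}e^{-\tau}\,d\tau,
\]
which is bounded below uniformly, provided $N\mu(t_0)^{1/d}$ stays bounded. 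Also $C_{N,\Omega^*}(\zeta_1^N)$ is bounded below for $t_0\le b<1$. This yields the required bound $G^*-G_Q\ge c\,D_N(Q)^2$.

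The main obstacle is controlling $\mu(t_0)$ from below as well as above. If $\mu(t_0)$ is extremely small (say $Q$ is far from a kernel and $\sup u<t_0$), then $N\mu^{1/d}$ is small, which only helps the integral, so that regime is fine. The truly delicate point is instead the degenerate case $\mu(t_0)=0$, where $\Omega_{t_0}$ is empty and the concentration inequality gives nothing. In that case we argue directly: $G^*(t_0)-G_Q(t_0)=G^*(t_0)\ge c>0$, while $D_N(Q)^2\le 4$, so the bound holds trivially. With these cases handled, integrating against $d\Phi'$ over $[a,b]$ gives Theorem~\ref{thm:Wehrl} with $C=4/(c\,m)$ up to constants.
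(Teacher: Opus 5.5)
Your approach works and is genuinely different from the paper's, but one step is false as written and needs a patch. You assert that $C_{N,\Omega^*}(\zeta_1^N)$ is bounded below for $t_0\le b$. However, $\Omega^*$ is the cap of measure $\mu(t_0)$, not $\mu_0(t_0)$, so $C_{N,\Omega^*}(\zeta_1^N)=\binom{N+d}{d}\int_0^{\mu(t_0)}\mu_0^{-1}(s)\,ds\le\binom{N+d}{d}\mu(t_0)$. This tends to $0$ when $0<\mu(t_0)\ll N^{-d}$ (e.g.\ $T$ just above $t_0$). In that regime Theorem~\ref{thm:concentration} only gives a bound of order $\binom{N+d}{d}\mu(t_0)\,D_N(Q)^2$. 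Your remark that small $\mu(t_0)$ ``only helps the integral'' looks at the wrong factor.

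The repair is your own degenerate-case argument, which works verbatim for small positive $\mu(t_0)$. On one side, $G_Q(t_0)\le\binom{N+d}{d}(T-t_0)\mu(t_0)\le\binom{N+d}{d}\mu(t_0)$. On the other, $1-t^{1/N}\ge(1-t)/N$ gives $G^*(t_0)\ge\binom{N+d}{d}\int_b^1\bigl(\frac{1-t}{N}\bigr)^d dt\ge c_0>0$ for all $t_0\in[a,b]$. If $\binom{N+d}{d}\mu(t_0)\le c_0/2$, then $G^*-G_Q\ge c_0/2\ge \frac{c_0}{4}D_N(Q)^2$, since $D_N(Q)^2=2(1-\sqrt T)\le 2$. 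Otherwise $C_{N,\Omega^*}(\zeta_1^N)\ge\binom{N+d}{d}\mu(t_0)\,\mu_0^{-1}(\mu(t_0))\ge\frac{c_0}{2}\bigl(1-(d!/a)^{1/d}/N\bigr)^N$. This is bounded below for large $N$ by your Chebyshev bound $\mu(t_0)\le d!/(aN^d)$.

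Two cosmetic points. First, $\Phi'(0)$ may be $-\infty$ (e.g.\ for $t\log t$), so expand $\Phi'$ about an interior point instead. The identity $S_{N,\Phi}(Q)-S_{N,\Phi}(\zeta_1^N)=\int_{(0,1)}(G^*-G_Q)\,d\Phi'$ is unaffected, by Tonelli, since everything is nonnegative. Second, what is really needed is that $d\Phi'$ charges the open interval $(0,1)$, since endpoint values of $\Phi$ are invisible to $S_{N,\Phi}$. This matches the paper's choice of $0<a<b<1$ with $\Phi'(a)<\Phi'(b)$.

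Compared with your route, the paper never invokes Theorem~\ref{thm:concentration}. It splits $\Phi=\Phi_0+\Phi_1$ at a single point $a$; up to the factor $\binom{N+d}{d}$, $\mathcal S_0$ is the part of your integral over $[a,1)$. It discards $\Phi_1$ by Lieb--Solovej. It then bounds $\int_a^1(\Phi'(t)-\Phi'(a))(\mu_0-\mu)\,dt$ directly by a case analysis on the crossing point $t^*$, using:
\begin{itemize}
\item Lemma~\ref{lem:uniquet*}, for the sign of $\mu-\mu_0$ on $(t_0^{\tilde\omega},t^*)$;
\item Lemma~\ref{lem:estimateintegral};
\item Lemma~\ref{lem:lower2}.
\end{itemize}
Large $N$ enters only through $t_0^{\tilde\omega}=(1-\tilde\omega^{1/d})^N<a$.

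Your version is more modular: Wehrl stability becomes a corollary of concentration stability at single levels. It also makes the role of large $N$ transparent. By Chebyshev, fixed heights correspond to volumes $O(N^{-d})$, which lie below the isoperimetric threshold and in the range where the prefactor of \eqref{eq:conc_polyn} is $O(1)$. The price is the normalization by $C_{N,\Omega^*}(\zeta_1^N)$, which creates the small-volume case above. The paper's direct route avoids that normalization and gives finer control of $N_\Phi$. If $\Phi$ is non-linear on $(T^*,1)$, then $t^*\le a$ automatically and only Lemma~\ref{lem:estimateintegral} is needed, with no isoperimetric input, so $N_\Phi=1$. A reduction through Theorem~\ref{thm:concentration} does not see this.
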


\begin{rmk}
    The constant $N_\Phi$ is again related to the threshold for the balls as isoperimetric domains.  We will require that $\Phi$ is non-linear in $((1-\tilde\omega^{1/d})^N,1)$, which is satisfied for sufficiently large $N$ (once $d$ is fixed). It is worth to mention that there is a universal constant $T^*$, such that if $\Phi$ is non-linear in $(T^*,1)$ then $N_\Phi=1$ (see the proof of Theorem~\ref{thm:Wehrl}). Moreover, if $d=1$, we always have $\tilde \omega = 1$, see \cite{Oss}, and therefore $N_\Phi=1$, which agrees with our result in \cite{GFOC25}.
\end{rmk}

\begin{rmk}
    In Theorem~\ref{thm:concentration}, the ratio between the powers of $D_N(Q)$ or $\mathcal A_{\sigma}(\Omega)$ and of the deviation of the concentration  from the maximal value  is sharp.  
    The same happens in Theorem~\ref{thm:Wehrl} between the powers of $D_N(Q)$ and $S_{N,\Phi}(Q)-S_{N,\Phi}(\zeta_1^N)$.  Section~\ref{sec:optimality} deepens into these observations.
\end{rmk}

\subsection{Irreducible representations of \texorpdfstring{$SU(d+1)$}{SU(d+1)}}
\label{subsec:irred}
For the group $SU(d+1)$ with $d\ge 1$, the space $\mathbb P_N^d$ of homogeneous polynomials  of degree $N$  provides an explicit realization of a symmetric irreducible representation. Any element $\mathcal R\in SU(d+1)$ acts on a polynomial $P\in \mathbb P_N^d$ as $\pi(\mathcal R) P(\zeta) = P(\mathcal R^{-1}(\zeta))$. This finite-dimensional representation is irreducible and unitary for all $N\ge 0$. 

The previous representation is used in quantum mechanics, in whose language any positive-semidefinite  operator $\rho:\mathbb P_N^d\to\mathbb P_N^d$ with $\mathrm {Tr}(\rho) = 1$ defines a state.
If $\mathrm{rank}(\rho) = 1$ we have a pure state, otherwise we have a mixed or general state. In general we have that
there is an orthonormal basis $\{Q_j\}_j$ of $\mathbb P_N^d$ and constants $\lambda_j\ge 0$ such that $\sum_j \lambda_j = 1$ and
$$\rho(Q) = \sum_{j} \lambda_j \langle Q, Q_j\rangle_{\mathbb P^d_N} Q_j.$$ 
Given any $\eta\in \partial \mathbb B$, the pure state that corresponds to $\rho(Q)(\zeta) = \langle Q, K_N(\cdot,\eta)\rangle_{\mathbb P^d_N} K_N(\zeta,\eta)$ is called a \emph{coherent state}.

For a fixed  $\eta\in \partial \mathbb B$ and any  operator $\rho$  as above,  the associated \emph{Husimi function} $u_\rho:SU(d+1)\to \mathbb R$ is defined as 
 \begin{align}\label{eq:Husimi}
      u_\rho(\mathcal R) := \langle K_N(\cdot, \mathcal R \eta), \rho(K_N(\cdot,\mathcal R \eta))\rangle_{\mathbb P^d_N} = \sum_j \lambda_j |Q_j(\mathcal R\eta)|^2.
 \end{align}
Given a convex function $\Phi:[0,1]\to\mathbb{R}$ and a state $\rho$, with Husimi function $u_\rho$, its \emph{generalized Wehrl entropy} is defined as 
\begin{align}\label{eq:entropyop}
    \mathcal S_{N,\Phi}(\rho) = -\int_{SU(d+1)} \Phi(u_\rho(\mathcal R))\, d\mathcal R,
\end{align}
where $d\mathcal R$ is the Haar measure on $SU(d+1)$.
Notice that its value is independent of the choice of $\eta$ in the definition of the Husimi function. 
In the particular case when $\Phi(x) = x \log x$, this is the classical Wehrl entropy of the state $\rho$. 

In \cite{LiebSolovej16},  Lieb and Solovej showed that the entropy of a state is minimized at the coherent states. More precisely:
\begin{thm}[\cite{LiebSolovej16}]
\label{thm:LS16op}Let $\Phi:[0,1]\to \mathbb R$ be any convex function.
Then for any positive-semidefinite  operator $\rho:\mathbb P_N^d\to\mathbb P_N^d$ with $\mathrm {Tr}(\rho) = 1$   we have that
\[
\mathcal S_{N,\Phi}(\rho_0)\leq  \mathcal S_{N,\Phi}(\rho),
\]
where $\rho_0$ is  any coherent state. 
\end{thm}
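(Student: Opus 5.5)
The plan is to reduce Theorem~\ref{thm:LS16op} to the pure-state version, Theorem~\ref{thm:LS16}, using two facts: the entropy of a mixed state is controlled by that of its pure components, and the integral over $SU(d+1)$ equals the integral over $\partial\mathbb B$. For the latter, fix $\eta\in\partial\mathbb B$. The map $\mathcal R\mapsto \mathcal R\eta$ pushes the Haar probability measure $d\mathcal R$ forward to a rotation-invariant probability measure on $\partial\mathbb B$. Since $SU(d+1)$ acts transitively on $\partial\mathbb B$ for $d\ge1$, uniqueness of invariant measures identifies this pushforward with $\sigma$. Hence, for any state,
\begin{align*}
\mathcal S_{N,\Phi}(\rho) = -\int_{\partial\mathbb B}\Phi\Big(\sum_j\lambda_j|Q_j(\zeta)|^2\Big)\,d\sigma(\zeta).
\end{align*}
This also confirms that the value does not depend on $\eta$.

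For the mixed state, use the spectral decomposition $\rho=\sum_j\lambda_j\langle\cdot,Q_j\rangle Q_j$ with $\lambda_j\ge0$ and $\sum_j\lambda_j=1$. At each point $\zeta$ the number $u(\zeta)=\sum_j\lambda_j|Q_j(\zeta)|^2$ is a convex combination of the values $|Q_j(\zeta)|^2\in[0,1]$; these lie in $[0,1]$ by \eqref{eq:supQ}. Convexity of $\Phi$ (pointwise Jensen) then gives
\begin{align*}
\Phi(u(\zeta))\le \sum_j\lambda_j\Phi(|Q_j(\zeta)|^2).
\end{align*}
Integrating against $\sigma$ yields $\mathcal S_{N,\Phi}(\rho)\ge \sum_j\lambda_j\,\mathcal S_{N,\Phi}(\rho_j)$, where $\rho_j$ is the pure state associated with $Q_j$.

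Next, compare each pure state with a coherent state. For a pure state with $\|Q_j\|_{\mathbb P^d_N}=1$, the identity above shows that $\mathcal S_{N,\Phi}(\rho_j)=\binom{N+d}{d}^{-1}S_{N,\Phi}(Q_j)$. By Theorem~\ref{thm:LS16}, this is at least $\binom{N+d}{d}^{-1}S_{N,\Phi}(\zeta_1^N)$. The latter value is exactly the entropy of the coherent state attached to $\zeta_1^N=K_N(\cdot,e_1)$. It also equals the entropy of any other coherent state, because $K_N(\cdot,\eta')$ is a unitary rotation of $K_N(\cdot,e_1)$ and $\sigma$ is rotation invariant.

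Summing, and using $\sum_j\lambda_j=1$, we obtain $\mathcal S_{N,\Phi}(\rho)\ge\mathcal S_{N,\Phi}(\rho_0)$. The main point requiring care is the identification of the Haar integral with the $\sigma$-integral, including the normalization of both measures. Everything else follows from convexity and from Theorem~\ref{thm:LS16}.
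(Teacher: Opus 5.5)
Your proof is correct. The paper gives no argument for this statement. It is quoted from \cite{LiebSolovej16}, as is the pure-state version Theorem~\ref{thm:LS16}. So what needs checking is your deduction of the state version from the pure-state one, and that deduction is sound. The Husimi function is affine in $\rho$, so pointwise convexity of $\Phi$ makes $\rho\mapsto\mathcal S_{N,\Phi}(\rho)$ concave. This gives $\mathcal S_{N,\Phi}(\rho)\ge\sum_j\lambda_j\mathcal S_{N,\Phi}(\rho_j)\ge\min_j\mathcal S_{N,\Phi}(\rho_j)$. You handle correctly both the transfer from the Haar integral to the $\sigma$-integral via the transitive action $\mathcal R\mapsto\mathcal R\eta$ and the rotation invariance of the coherent-state entropy. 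A different normalization of $d\mathcal R$ would only introduce a positive factor common to both sides.

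It is worth seeing what this route cannot do, because it explains why the paper treats mixed states differently in Theorem~\ref{thm:Wehrl_generalop}. Jensen's inequality throws away the convexity gap. Combining your reduction with the quantitative pure-state bound of Theorem~\ref{thm:Wehrl} therefore only gives a lower bound of the form $\mathcal S_{N,\Phi}(\rho)-\mathcal S_{N,\Phi}(\rho_0)\ge c\sum_j\lambda_jD_N(Q_j)^2$. Take $\rho=\tfrac23\pi_\eta+\tfrac13\pi_{\eta'}$ with $\eta=(1,0,\dots,0)$ and $\eta'=(0,1,0,\dots,0)$. Its eigenvectors are the orthonormal coherent states $\zeta_1^N$ and $\zeta_2^N$, so this lower bound vanishes, although $D_N(\rho)>0$. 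For that reason the paper, following \cite{GFOC25}*{Section 5}, reruns the superlevel-set argument directly on the Husimi function $U$ of $\rho$ from \eqref{eq:U} instead of reducing to pure states. For the qualitative inequality asked here, your concavity reduction is the most economical complete argument.
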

Theorem~\ref{thm:Wehrl} provides a quantitative version of Lieb--Solovej's result in the case that $\rho$ is a pure state. Moreover, it implies that if $\Phi$ is nonlinear then the coherent states are the unique minimizers among the pure states, provided that $N\ge N_\Phi$. Later on we will see that Theorem~\ref{thm:Wehrl} can be extended to mixed states, see Theorem~\ref{thm:Wehrl_generalop}. It remains an open problem to prove an analogous result for non-symmetric irreducible representations of $SU(d+1)$ and more generally, other irreducible representations of other compact Lie groups.

\subsection{Other reformulations}
\label{subsec:otherref}

Our previous results can be formulated in the context of complex projective spaces and, using suitable coordinates, for holomorphic polynomials of bounded degree, generalizing the spaces considered in \cite{GFOC25} to higher dimensions. 
We start this section by introducing the setting, and we continue with the reformulation of the main results.

\subsubsection{Holomorphic sections on the complex projective space}
Let $M$ be a $d$-dimensional compact Kähler manifold. Consider a Hermitian holomorphic line bundle $(L, h) \to M$ whose curvature form, $\Theta$, is positive. This positivity means that the associated real form $\omega := i \Theta$ is a Kähler form on $M$, which defines a natural volume form $\frac{\omega^d}{d!}$.

For any power $N \ge 1$, the Hermitian metric $h$ on $L$ induces a metric $h^N$ on the line bundle $L^N$. We are interested in the vector space $H^0(M, L^N)$ of global holomorphic sections of $L^N$.
This space is endowed with the natural $L^2$ Hermitian inner product:
$$\langle s, t\rangle := \int_{M} h^N(s,t) \frac{\omega^d}{d!}, \quad \text{for } s, t \in H^0(M, L^N).$$

A fundamental example is when $M$ is the complex projective space $\mathbb{C}\mathbb{P}^d$ and $(L, h)$ is the hyperplane line bundle  equipped with the Fubini-Study metric.
In this specific case, the space of global holomorphic sections $H^0(\mathbb{C}\mathbb{P}^d, L^N)$ can be identified with the space of homogeneous polynomials of degree $N$ in $\mathbb{C}^{d+1}$ (see \cite{Shiffman15}) or, when restricted to affine coordinates, with polynomials in $d$ variables of degree $\le N$.

The $L^2$ inner product, when expressed using standard affine coordinates $z \in \mathbb{C}^d$, takes the explicit form:
\begin{align}\label{eq:innerprodsec}
    \langle s, t\rangle = C_{d,N} \int_{\mathbb C^d} \frac{p(z)\overline{q(z)}}{(1+|z|^2)^{N}} \frac{dz}{(1+|z|^2)^{d+1}},
\end{align}
where $p(z)$ and $q(z)$ are the coordinate representations of the sections $s$ and $t$ respectively, i.e., holomorphic polynomials in $d$ variables of degree smaller or equal than $N$,  $C_{d,N}$ is a normalization constant and $dz=\prod_{j=1}^d dz_j$, where $dz_j=dx_j dy_j$ and $z_j=x_j+\im y_j$.

\subsubsection{The space $\mathcal P^{d}_N$}
The previous observation makes it natural to consider the space $\mathcal P^{d}_N$ of polynomials of degree less than or equal to $N$ in $d$ complex variables endowed with a weighted  $L^2$ inner product as in \eqref{eq:innerprodsec}, which arises from 
the Fubini-Study metric. 
Any element of $\mathcal P_N^d$ admits the following expression:
\begin{align*}
    q(z)=\sum_{|\alpha|\leq N} a_\alpha z^\alpha, \quad \mbox{where } z\in\C^{d}, \ \alpha\in\mathbb N_0^{d} \mbox{ and }  \ a_\alpha\in\C.
\end{align*}

The Fubini-Study metric, given by
$$g_{jk}=\beta_d^{1/d}\frac{(1+|z|^2)\delta_{jk}-\bar z_jz_k}{(1+|z|^2)^2},$$
induces the following volume form:
\begin{align}\label{eq:defdm}
    dm(z)=\beta_d\frac{1}{(1+|z|^2)^{d+1}} dz.
\end{align}
We choose
\begin{align}\label{eq:gammad}
    \beta_d=\frac{2d}{|\partial \mathbb B^d|}=\frac{d!}{\pi^d}.
\end{align}
so $m(\mathbb C^d)=1$.
Finally, we require $\|1\|_{\mathcal P^d_N}=1$. 
Then, the inner product with which we endow $\mathcal P^{d}_N$ is:
\begin{align*}
    \langle p,q\rangle_{\mathcal P^d_N}=\binom{N+d}{d}\int_{\mathbb C^d} \frac{p(z)\overline{q(z)}}{(1+|z|^2)^N}dm(z).
\end{align*}

The space $\mathcal P^{d}_N$ with the previous inner product is also a reproducing kernel Hilbert space, with reproducing kernel
$$k_N(z,w)=(1+z\cdot\bar w)^N, \quad z, w\in\mathbb C^d.$$
Notice that reproducing kernels $k_N(\cdot,w)$ no longer have unit norm in $\mathcal P^d_N$, so we introduce their normalization:
\begin{align*}
    \kappa_N(z,w)=\frac{k_N(z,w)}{\|k_N(\cdot, w)\|_{\mathcal P^d_N}}=\frac{(1+z\cdot\bar w)^N}{(1+|w|^2)^N}.
\end{align*}

Finally, we notice that if $q(z)=Q(1,z)$ for any  $Q\in\mathbb P^d_N$, then $q\in\mathcal P^d_N$ and $\|q\|_{\mathcal P^d_N}=\|Q\|_{\mathbb P^d_N}$.
This correspondence allows us to rewrite Theorems~\ref{thm:concentration} and~\ref{thm:Wehrl} in these spaces.

\subsubsection{Main results}
In order to state the main theorems in $\mathcal P^d_N$ we must reintroduce the  notions of concentration and  the distance of the polynomials and the domains to the corresponding optimal ones in this setting. 

First of all, for any $\mathcal W\subset \mathbb C^d$ measurable  and any $q\in\mathcal P^d_N$, we define the concentration of $q$ in $\mathcal W$ as:
\begin{align*}
    C_{N,\mathcal W}(q)=\frac{\binom{N+d}{d}\int_{\mathcal W}\frac{|q(z)|^2}{(1+|z|^2)^N}dm(z)}{\|q\|_{\mathcal P^d_N}^2}.
\end{align*}
As in Definition~\ref{def:entropy}, for any $\Phi:[0,1]\to\mathbb R$ convex, the \emph{generalized Wehrl entropy} for any $q\in\mathcal P^d_N$ is:
\begin{align*}
    S_{N, \Phi}(q)=-\binom{N+d}{d}\mathlarger\int_{\mathbb C^d} \Phi\left(\frac{|q(z)|^2}{\|q\|_{\mathcal P^d_N}^2(1+|z|^2)^N}\right)dm(z).
\end{align*}

For any $q\in\mathcal P^d_N$ with $\|q\|_{\mathcal P^d_N}=1$, the \emph{distance to the (normalized) reproducing kernels} is defined by:
\begin{equation*}
    D_{N}(q)=\min\big\{\|q-e^{\im\theta}\kappa_N(\cdot, w)\|_{\mathcal P^d_N}:w \in\mathbb C^{d}, \theta\in[0,2\pi]\big\}.
\end{equation*}
Finally, the distance of any measurable set $\mathcal W\subset \mathbb C^d$  to the optimal domains, which in this case are  balls in $\mathbb C^d$, is given by the corresponding \emph{Fraenkel asymmetry}:
\begin{equation*}
    \mathcal A_{m}(\mathcal W)=\inf_{w\in\mathbb C^d}\left\{\frac{m\big(\mathcal W\backslash\mathbb B^d_r(w)\big)+m\big(\mathbb B^d_r(w)\backslash\mathcal W\big)}{m(\mathcal W)}\right\}, 
\end{equation*}
where  for any $w\in\mathbb C^d$, $r$ is chosen so $m\big(\mathbb B^d_r(w)\big)=m(\mathcal W)$.

\begin{thm}\label{thm:concentration_affine}
    For any $d\in\mathbb N$, there exist  constants $C>0$  and $\tilde \omega\in(0,1)$ (independent of $N$) such that for any measurable set $\mathcal W\subset \mathbb C^d$ with $m(\mathcal W)\in(0,\tilde\omega)$  and any $q\in\mathcal{P}_N^d$ with $\|q\|_{\mathcal{P}_N^d}=1$, there holds
    \begin{align}\label{eq:conc_polyn_affine}
        D_{N}(q)^2
        &\leq 
        \frac{C}{N^d\int_{m(\mathcal W)}^{\tilde\omega}(1-s^{1/d})^N ds}
        \left(1-\frac{C_{N,\mathcal W}(q)}{C_{N,\mathcal W^*}(1)}\right),\\
        \label{eq:conc_set_affine}
        \mathcal A_m(\mathcal W)^2&\leq \frac{C}{\alpha\big(m(\mathcal W)\big)}\left(1-\frac{C_{N,\mathcal W}(q)}{C_{N,\mathcal W^*}(1)}\right),
    \end{align}
    where
    $\mathcal W^*=\left\{z\in\mathbb C^d: |z|^2<\frac{m(\mathcal W)^{1/d}}{1-m(\mathcal W)^{1/d}}\right\}$
    and $\alpha(\omega)$ as in \eqref{eq:alpha_omega}.
\end{thm}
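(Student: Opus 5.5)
Theorem~\ref{thm:concentration_affine} is a transcription of Theorem~\ref{thm:concentration} through the dehomogenization $Q\mapsto q(z)=Q(1,z)$. I will assume Theorem~\ref{thm:concentration} and check that every quantity in \eqref{eq:conc_polyn}--\eqref{eq:conc_set} corresponds exactly to its counterpart in \eqref{eq:conc_polyn_affine}--\eqref{eq:conc_set_affine}.

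The key tool is the map $\Pi:\partial\mathbb B\setminus\{\zeta_1=0\}\to\mathbb C^d$, $\Pi(\zeta)=(\zeta_2/\zeta_1,\dots,\zeta_{d+1}/\zeta_1)$. Its fibres are circles, and its complement in the sphere is $\sigma$-null. I first show that $\Pi_*\sigma=m$. Both measures are probabilities and $U(d+1)$-invariant in the appropriate sense, so it suffices to verify the radial density. Writing $|\zeta_1|^2=1/(1+|z|^2)$, we get $\sigma(|\zeta_1|^2>s)=(1-s)^d$ (this is implicit in the formula for $\sigma(\Delta_t(\eta))$). On the other side, $m(|z|^2<R)=(R/(1+R))^d$ is a routine computation with \eqref{eq:defdm}, \eqref{eq:gammad}, and these two agree. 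Next, homogeneity gives $|Q(\zeta)|^2=|\zeta_1|^{2N}|q(z)|^2=|q(z)|^2/(1+|z|^2)^N$. Hence for $\mathcal W\subset\mathbb C^d$ and $\Omega=\Pi^{-1}(\mathcal W)$ we have $\sigma(\Omega)=m(\mathcal W)$ and $C_{N,\Omega}(Q)=C_{N,\mathcal W}(q)$. Since $\|q\|_{\mathcal P^d_N}=\|Q\|_{\mathbb P^d_N}$, the map $Q\mapsto q$ is unitary.

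I then identify the extremal objects. The set $\mathcal W^*$ is $\Pi(\Omega^*)$, because $|\zeta_1|^2>1-\omega^{1/d}$ is equivalent to $|z|^2<\omega^{1/d}/(1-\omega^{1/d})$. The monomial $\zeta_1^N$ corresponds to $1$, so the two right-hand sides coincide.

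For the polynomial distance, note that $K_N(\cdot,\eta)$ with $\eta_1\neq0$ dehomogenizes to $\bar\eta_1^N(1+z\cdot\bar w)^N$, where $w=\eta'/\eta_1$. Since $|\eta_1|^2=1/(1+|w|^2)$, this equals $e^{\im\theta}\kappa_N(z,w)$ for a suitable $\theta$. Conversely, every $e^{\im\theta}\kappa_N(\cdot,w)$ arises this way, since $\eta\mapsto(\Pi(\eta),\arg\eta_1)$ covers $\mathbb C^d\times[0,2\pi)$. The points with $\eta_1=0$ form a closed null set, so by continuity of $\eta\mapsto\|Q-K_N(\cdot,\eta)\|$ they do not change the minimum. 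Therefore $D_N(q)=D_N(Q)$.

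The one genuinely geometric step, and the main obstacle, is the Fraenkel asymmetry. I must show that Euclidean balls $\mathbb B^d_r(w)$ with $m(\mathbb B^d_r(w))=m(\mathcal W)$ correspond to the caps $\Delta_t(\eta)$. This is false for Euclidean balls not centered at $0$: a Fubini--Study ball around $w\ne0$ is an ellipsoid in affine coordinates. So a literal identification fails, and I would handle it in one of two ways.

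\begin{itemize}
\item[(a)] Interpret $\mathbb B^d_r(w)$ as the Fubini--Study ball, that is, $\Pi(\Delta_t(\eta))$ with $\Pi(\eta)=w$. Then $\mathcal A_m(\mathcal W)=\mathcal A_\sigma(\Pi^{-1}\mathcal W)$ follows immediately from measure preservation.
\item[(b)] If Euclidean balls are truly intended, bound the symmetric difference directly. Given a near-optimal cap $\Delta_t(\eta)$, first rotate by an element of $SU(d+1)$ so that the image of $\mathcal W$ has its optimal cap centered at $w=0$; unitarity makes $C_{N,\cdot}(\cdot)$ and $D_N$ invariant under this rotation. The centered cap is then exactly the Euclidean ball $\mathcal W^*$, and the infimum over $w$ in $\mathcal A_m$ is bounded above by the value at $w=0$.
\end{itemize}

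In case (b) this approach requires care, because rotations do not preserve $\mathcal W$ itself in affine coordinates. I would therefore first try reading (a) as the intended definition. Applying Theorem~\ref{thm:concentration} to $\Omega=\Pi^{-1}(\mathcal W)$ then yields both inequalities with the same $C$ and $\tilde\omega$, which settles the theorem.
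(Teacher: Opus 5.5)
Your proposal is correct and follows the paper's route: the theorem is Theorem~\ref{thm:concentration} transported through the unitary dehomogenization $Q\mapsto Q(1,\cdot)$, with $\Pi_*\sigma=m$ and with concentrations, $\mathcal W^*$ and $D_N$ corresponding exactly as you check. Your reading (a) of $\mathbb B^d_r(w)$ as the Fubini--Study ball $\Pi(\Delta_t(\eta))$ is the one that must be intended; the caps with $\eta_1=0$ are handled by the same density and continuity argument you use for $D_N$.

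You should drop (b) outright rather than call it delicate, because under the Euclidean reading \eqref{eq:conc_set_affine} is false for $d\ge 2$. Take $w\neq0$, $\eta=(1,w)/\sqrt{1+|w|^2}$ and $t$ close to $1$. Then $\mathcal W=\Pi(\Delta_t(\eta))$ is a non-round ellipsoid and $q=\kappa_N(\cdot,w)$ has zero deficit, yet $\mathcal W$ has positive asymmetry with respect to Euclidean balls. Rotating $\Omega$ as in (b) produces a different affine set and so gives no bound on $\mathcal A_m(\mathcal W)$.
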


\begin{thm}\label{thm:Wehrl_affine}
  Let $\Phi:[0,1]\to\mathbb R$ be a  convex, non-linear function, and let $d\in\mathbb N$.  Then there exist  constants $C > 0$  and $N_\Phi\in\mathbb N$ (depending only on $\Phi$ and $d$) such that for any $N\geq N_\Phi$, the following holds: Let $q\in\mathcal P^d_N$ with $\|q\|_{\mathcal P^d_N}=1$, then
  \begin{align*}
      D_N(q)^2\leq C\big(S_{N,\Phi}(q)-S_{N,\Phi}(1)\big).
  \end{align*}
\end{thm}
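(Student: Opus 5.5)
Theorem~\ref{thm:Wehrl_affine} follows from Theorem~\ref{thm:Wehrl} by transporting all quantities through the correspondence $q(z)=Q(1,z)$ between $\mathbb P^d_N$ and $\mathcal P^d_N$. I would fix $N_\Phi$ and $C$ as in Theorem~\ref{thm:Wehrl} and use the same constants here. Everything then reduces to checking that the three objects in the inequality coincide under the correspondence: the norm, the Wehrl entropy, and the distance to the reproducing kernels.

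\textbf{Norm and entropy.} Given $q\in\mathcal P^d_N$ of degree $\le N$, I take its homogenization $Q(\zeta)=\zeta_1^N q(\zeta_2/\zeta_1,\dots,\zeta_{d+1}/\zeta_1)\in\mathbb P^d_N$. It is already recorded that $\|q\|_{\mathcal P^d_N}=\|Q\|_{\mathbb P^d_N}$. For the entropy I use the map $\partial\mathbb B\to\mathbb C^d$, $\zeta\mapsto z=\zeta'/\zeta_1$, where $\zeta'=(\zeta_2,\dots,\zeta_{d+1})$; it is defined off the null set $\{\zeta_1=0\}$. Since $|\zeta_1|^2=(1+|z|^2)^{-1}$, we get
\[
|Q(\zeta)|^2=\frac{|q(z)|^2}{(1+|z|^2)^N}.
\]
The pushforward of $\sigma$ under this map is $m$: it is the invariant probability measure on $\mathbb{CP}^d$ written in affine coordinates, and by the choice of $\beta_d$ in \eqref{eq:gammad} it has total mass $1$. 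Because $|Q|^2$ is invariant under the circle action, the integrand $\Phi(|Q|^2)$ descends to $\mathbb C^d$, and hence $S_{N,\Phi}(Q)=S_{N,\Phi}(q)$. Taking $q=1$ gives $Q=\zeta_1^N$, so $S_{N,\Phi}(1)=S_{N,\Phi}(\zeta_1^N)$.

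\textbf{Distance to the reproducing kernels.} For $\eta\in\partial\mathbb B$ with $\eta_1\neq0$, write $\eta_1=|\eta_1|e^{\im\theta}$ and $w=\eta'/\eta_1$. Dehomogenizing the kernel gives
\[
K_N((1,z),\eta)=\bar\eta_1^N(1+z\cdot\bar w)^N=e^{-\im N\theta}\kappa_N(z,w),
\]
using $|\eta_1|^{2N}=(1+|w|^2)^{-N}$. So every kernel with $\eta_1\ne0$ corresponds to $e^{\im\phi}\kappa_N(\cdot,w)$ for some phase, and conversely every pair $(w,\theta)$ arises in this way. The remaining kernels, those with $\eta_1=0$, form a null set. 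Since the map $Q\mapsto q$ is an isometry, the minimum defining $D_N(Q)$ is at most the infimum over $\eta_1\neq0$. By continuity of $\eta\mapsto K_N(\cdot,\eta)$ and density of $\{\eta_1\neq0\}$, the two values are equal. Therefore $D_N(q)=D_N(Q)$, provided the minimum in the affine definition is read as an infimum.

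\textbf{Conclusion and main obstacle.} Combining these identities, Theorem~\ref{thm:Wehrl} applied to $Q$ yields the stated inequality for $q$ with the same $C$ and $N_\Phi$. The only step needing care is the measure-theoretic identification of $\sigma$ with $m$. I would verify it directly: the pushforward is unitarily invariant, and a computation in polar coordinates shows it has density proportional to $(1+|z|^2)^{-(d+1)}$. The constant is then fixed by the requirement $m(\mathbb C^d)=1$, which also agrees with the normalization $\|1\|_{\mathcal P^d_N}=1=\|\zeta_1^N\|_{\mathbb P^d_N}$. The boundary case $\eta_1=0$ in the distance comparison is handled by the density argument above.
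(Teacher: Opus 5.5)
Your proposal is correct and follows the paper's route: the paper obtains Theorem~\ref{thm:Wehrl_affine} from Theorem~\ref{thm:Wehrl} through the correspondence $q(z)=Q(1,z)$. You supply the details the paper leaves implicit, namely that $\sigma$ pushes forward to $m$ under $\zeta\mapsto\zeta'/\zeta_1$, that the kernels dehomogenize to unimodular multiples of $\kappa_N(\cdot,w)$, and that the affine minimum must be read as an infimum because of kernels with $\eta_1=0$ (e.g.\ $q=z_1^N$). One cosmetic point: $K_N((1,z),\eta)=e^{-\im N\theta}\kappa_N(z,w)$ needs $|\eta_1|^N=(1+|w|^2)^{-N/2}$, not $|\eta_1|^{2N}=(1+|w|^2)^{-N}$, and it holds for the genuinely normalized kernel $k_N(\cdot,w)/\|k_N(\cdot,w)\|_{\mathcal P^d_N}=(1+z\cdot\bar w)^N(1+|w|^2)^{-N/2}$, so the exponent $N$ in the paper's displayed formula for $\kappa_N$ should be $N/2$.
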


This formulation of the main results allows us to recover the ones in the Bargmann-Fock space in \cite{FNT25} and \cite{GGRT}, respectively, by considering the limit $N\to\infty$ (see Section~\ref{sec:Fock}).

\subsection{Organization of  the article}
The rest of the article is organized as follows: In Section~\ref{sec:notation} we introduce  extra notation and preliminary results on the spaces.
Section~\ref{sec:superlevelsets} is devoted to  some
technical lemmas on the measure of the superlevel sets of functions related with the polynomials. Section~\ref{sec:proofs} contains the proofs of the main results. In Section~\ref{sec:further} we collect the following further results: the recovery of the results for the Fock space (Section~\ref{sec:Fock}), the sharpness of the main results (Section~\ref{sec:optimality}) and the generalization of the main results to general operators (Section~\ref{sec:generalop}). Since the proofs of all these results do not differ too much from those given in \cite{GFOC25}, most of the details are skipped, and only the main novelties are detailed.

\section{Notation and preliminaries}
\label{sec:notation}
Throughout the whole text,  $d\in\mathbb N$ will be fixed. Constants, usually denoted by $C$, may depend on $d$ but not on $N$. 
When $d=1$, all the results agree with those in \cite{GFOC25}.

Once $d$ is fixed,  $\mathbb B=\{\zeta\in\mathbb C^{d+1}: |\zeta|<1\}$ and  $\partial \mathbb B$ denotes its boundary, i.e., the  sphere of  (real) dimension  $2d+1$.
In the case of referring to the (complex) ball, or its boundary, in any other dimension, we will specify it. Namely, $\mathbb B^d=\{\zeta\in\mathbb C^{d}: |\zeta|<1\}$, and thus $\mathbb B=\mathbb B^{d+1}$.

Let $\zeta=(\zeta_1, \zeta_2,\dots, \zeta_{d+1})\in\partial\mathbb B$, with $\zeta_j\in\mathbb C$. We can express $\zeta$ in terms of the $2d+1$ real variables $\rho=(\rho_1,\dots,\rho_d)\in[0,1]^d$ and $\theta=(\theta_1,\dots,\theta_{d+1})\in[0,2\pi]^{d+1}$ as follows:
\begin{equation}\label{eq:zetapar}
\begin{aligned}
    \zeta_1&=e^{\im\theta_1}\sqrt{1-\rho_1},\\
    \zeta_2&=e^{\im\theta_2}\sqrt{\rho_1 (1-\rho_2)} ,\\
    &\vdots\\
    \zeta_{d}&=e^{\im\theta_{d}}\sqrt{\rho_1\rho_2\dots\rho_{d-1}(1-\rho_{d})},\\
    \zeta_{d+1}&=e^{\im\theta_{d+1}}\sqrt{\rho_1\rho_2\dots\rho_{d-1} \rho_{d}}.
\end{aligned}
\end{equation} 
With this parametrization, the measure on $\partial\mathbb B$ is given by: $$d\sigma(\zeta)=\frac{d!}{(2\pi)^{d+1}}\prod_{j=1}^{d}\rho_j^{d-j}d\rho_jd\theta, \quad d\theta=\prod_{k=1}^{d+1} d\theta_k.$$

For any $\rho\in[0,1]^d$ and $\theta\in [0,2\pi]^{d+1}$, we denote by $\zeta(\rho; \theta)$  the point in $\partial \mathbb B$ with the previous coordinates.
With some abuse of notation, given any function $U(\zeta)$, we may write $U(\rho; \theta)=U(\zeta(\rho; \theta))$.

\begin{lemma}\label{lem:onb}
    The set $\left\{\left(\frac{N!}{\alpha!}\right)^{1/2}\zeta^\alpha\right\}_{|\alpha|=N}$ is an orthonormal basis of $\mathbb P^d_N$. Furthermore, the norm \eqref{eq:norm} is equivalent to the Bombieri norm, defined for any $Q(\zeta)=\sum_{|\alpha|=N}a_\alpha\zeta^\alpha$ as:
    $$\|Q\|_{\mathbb P_N^d}^2=\sum_{|\alpha|=N }\frac{\alpha!}{N!}|a_\alpha|^2.$$
\end{lemma}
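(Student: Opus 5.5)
The plan is to compute the inner products of monomials directly and then read off both claims. For multi-indices $\alpha,\beta$ with $|\alpha|=|\beta|=N$, I will evaluate $\int_{\partial\mathbb B}\zeta^\alpha\bar\zeta^\beta\,d\sigma(\zeta)$ in the coordinates \eqref{eq:zetapar}. The integrand contains the factor $e^{\im(\alpha-\beta)\cdot\theta}$, and the measure is a product of a $\rho$-density and $d\theta$. Integrating over $\theta\in[0,2\pi]^{d+1}$ therefore gives zero unless $\alpha=\beta$, which proves orthogonality.

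For $\alpha=\beta$ I need $\int_{\partial\mathbb B}|\zeta^\alpha|^2 d\sigma=\frac{d!\,\alpha!}{(N+d)!}$. I would check this in one of two ways.

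The first way is the standard Gaussian trick. Compute $\int_{\C^{d+1}}|z^\alpha|^2e^{-|z|^2}dz$ in two ways. Directly, it factorizes into one-variable integrals and equals $\pi^{d+1}\alpha!$. In polar coordinates $z=r\zeta$, it equals
\[
|\partial\mathbb B^{d+1}|\int_0^\infty r^{2N+2d+1}e^{-r^2}dr\int_{\partial\mathbb B}|\zeta^\alpha|^2d\sigma
=\frac{2\pi^{d+1}}{d!}\cdot\frac{(N+d)!}{2}\int_{\partial\mathbb B}|\zeta^\alpha|^2d\sigma .
\]
Comparing the two expressions gives the formula.

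The second way is to integrate directly in the $\rho$ variables. After the $\theta$ integration, the integral becomes $d!\int_{[0,1]^d}\prod_j(\cdots)\,d\rho$, and it factorizes into Beta integrals $\int_0^1\rho_j^{a_j}(1-\rho_j)^{\alpha_j}d\rho_j$. These telescope to the same value.

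Multiplying by the normalization gives
\[
\binom{N+d}{N}\frac{N!}{\alpha!}\cdot\frac{d!\,\alpha!}{(N+d)!}=1 ,
\]
so $\{(N!/\alpha!)^{1/2}\zeta^\alpha\}_{|\alpha|=N}$ is orthonormal. These vectors span $\mathbb P^d_N$ by definition, so they form an orthonormal basis.

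Finally, write $Q=\sum_{|\alpha|=N} a_\alpha\zeta^\alpha=\sum_{|\alpha|=N}a_\alpha(\alpha!/N!)^{1/2}e_\alpha$, where $e_\alpha$ denotes the normalized basis element. Parseval's identity then gives $\|Q\|^2_{\mathbb P^d_N}=\sum_{|\alpha|=N}\frac{\alpha!}{N!}|a_\alpha|^2$, which is the Bombieri norm; in fact the two norms are equal, not merely equivalent. The only step needing real care is the constant in the diagonal integral, and the Gaussian computation avoids the bookkeeping of the Beta-integral route.
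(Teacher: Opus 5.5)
Your proposal is correct and follows the paper's proof: orthogonality from the $\theta$-integration in the coordinates \eqref{eq:zetapar}, the diagonal value $\frac{d!\,\alpha!}{(N+d)!}$, and then Parseval for the Bombieri norm (which, as you note, is an equality, not just an equivalence). The paper computes the diagonal integral by your second route (telescoping Beta integrals in the $\rho_j$); your Gaussian polar-coordinate computation is an equally valid and somewhat cleaner way to get the same constant.
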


\begin{proof}
Using the parametrization in \eqref{eq:zetapar}, we firstly observe that  $|\zeta^\alpha(\rho;\theta)|^2$ does not depend on $\theta$. Therefore, provided $|\alpha|=N$,
\begin{align*}
 \int_{\partial\mathbb B} |\zeta|^{2\alpha} d\sigma(\zeta)
    =& d! \prod_{j=1}^{d} \left(\int_0^1 (1-\rho_j)^{\alpha_j} \rho_j^{N+d-j-(\alpha_1+\dots +\alpha_j)}d\rho_j\right)
    \\=& d! \prod_{j=1}^{d} \frac{\alpha!\big(N+d-j-(\alpha_1+\dots+\alpha_j)\big)!}{\big(N+d-(j-1)(\alpha_1+\dots+\alpha_{j-1})\big)!}
    =\frac{d! \prod_{j=1}^d \alpha_j!}{(N+d)!}=\frac{d!\ \alpha!}{(N+d)!}
\end{align*}
Consequently, $\|\zeta^\alpha\|_{\mathbb P_N^d}^2=\frac{\alpha!}{N!}$.

If $\alpha\neq \beta$, $\zeta^\alpha(\rho;\theta)\bar\zeta^\beta(\rho;\theta)$ depends on $e^{i\ell \theta_j}$ for some $j\in\{1,\dots,d+1\}$ and $\ell\in\mathbb Z\backslash\{0\}$, so $\int_0^{2\pi}e^{i\ell \theta_j} d\theta_j=0 $ and hence $\int_{\partial\mathbb B} \zeta^\alpha\bar\zeta^\beta d\sigma(\zeta)=0$.

Finally, the equivalence between norms follows from the orthogonality of the monomials and the values of their norm.
\end{proof}

\section{On the measure of the super-level sets}\label{sec:superlevelsets}
We start this section by  introducing some notation related with any $Q\in \mathbb{P}_N^d$:
\begin{align}
    \label{eq:Udef}
    U(\zeta)& = |Q(\zeta)|^2,
    \\
    \nonumber
    T&=\sup_{\zeta\in\mathbb \partial \mathbb B} U(\zeta),\\
    \nonumber
    \mu(t)& = \sigma\big(\{\zeta\in\partial\mathbb B: U(\zeta) > t\}\big)
\end{align}
If $Q(\zeta)=(\zeta\cdot\bar\eta)^N$ for some $\eta\in\partial\mathbb B$, then $T=1$ and
\begin{align}\label{eq:mu0}
    \mu(t)=(1-t^{1/N})^{d}=:\mu_0(t).
\end{align}

On the one hand, by \eqref{eq:supQ}, if $\|Q\|_{\mathbb P_N^d}= 1$, we can conclude that  $T \le 1$ and the equality is attained if and only if $Q(\zeta)=K_N(\zeta,\eta)=(\zeta\cdot\bar\eta)^N$ for some $\eta\in\partial \mathbb B$.
On the other hand, notice  that $\mu(0)=1$ and $\mu(t)=0$ for $t\geq T$.

Since $Q$ is holomorphic, we know that the level sets of $U(\zeta)$ cannot have positive measure in $\partial\mathbb B$ unless $Q(\zeta)$ is constant (see \cite{Rudin}*{Theorem 5.5.9}). Therefore, $\mu(t)$ is a strictly decreasing function in $[0,T]$ and therefore it  is invertible in $[0,1]$. We denote its inverse by $\mu^{-1}(s)$. In the case of $\mu_0$, we have:
\begin{align}\label{eq:mu0inv}
    \mu^{-1}_0(s)=(1-s^{1/d})^N.
\end{align}
Notice that if $\|Q\|_{\mathbb P_N^d}= 1$ then 
\begin{align}\label{eq:int_mu_muinv}
    \int_0^T\mu(t)dt=\int_{0}^1\mu^{-1}(s)ds=\frac{1}{\binom{N+d}{d}}.
\end{align}

In this section, we aim to study the function $\mu(t)$ for $Q\in\mathbb P_N^d$ with $\|Q\|_{\mathbb P_N^d}= 1$ and to bound the integrals of $\mu_0(t)-\mu(t)$ and  $\mu_0^{-1}(s)-\mu^{-1}(s)$ in suitable intervals.
In the comparison between $\mu$ and $\mu_0$, or their inverses, we will additionally use the following notation:
\begin{align}\label{eq:t*}
    t^*&=\sup\{t\in(0,T): \mu(t)\geq \mu_0(t)\},\\
    \label{eq:s*}
    s^*&=\inf\{s\in(0,1): \mu^{-1}(s)\geq \mu_0^{-1}(s)\}.
\end{align}
It is clear that $s^*=\mu^{-1}(t^*)=\mu^{-1}_0(t^*)$.

\subsection{Upper estimate for \texorpdfstring{$\mu(t)$}{µ(t)}.}
Our first result is analogous to  \cite{GFOC25}*{Lemma 2.1} and can be obtained similarly. We note that here we work with homogeneous polynomials in $\mathbb P^d_N$ and the measure $\sigma$ on $\partial \mathbb B$ instead of working with the  polynomials of bounded degree in $\mathcal P^d_N$ and the measure $m$ on $\mathbb C^d$.

\begin{lemma}\label{lem:super-level} 
For every $t_0\in (0,1)$, there exists a threshold $T_0\in [t_0,1)$ and a 
constant
$C_0=C_0(t_0\,)>0$ with the following property:
If $Q\in \mathbb{P}_N^d$ is such that $\|Q\|_{\mathbb P^d_N}=1$ and
$T\geq T_0$,
then
\begin{equation}
    \label{newestmu}
     \mu(t)\leq \big( 1+{C_0  
(1-T)}\big)\mu_0\left(\frac{t}T\right)\quad\forall t\in
(t_0,T),
\end{equation}
with $\mu_0$ as in \eqref{eq:mu0}.
\end{lemma}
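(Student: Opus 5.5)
Let $\eta\in\partial\mathbb B$ be a point where $U(\eta)=T$. After a unitary rotation (which preserves $\sigma$, the norm and $\mu$) we may assume $\eta=(1,0,\dots,0)$. Expand $Q$ in the orthonormal basis of Lemma~\ref{lem:onb}:
\[
Q=a\,\zeta_1^N+R, \qquad R\perp \zeta_1^N .
\]
By \eqref{eq:supQ} we have $|a|^2=|Q(\eta)|^2=T$ and $\|R\|^2_{\mathbb P^d_N}=1-T$. Since $\eta$ is a maximum of $U$ on the sphere, the first-order condition forces the components of $R$ on $\zeta_1^{N-1}\zeta_j$, $j\ge 2$, to vanish. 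Hence $R$ only contains monomials of degree at most $N-2$ in $\zeta_1$. The plan is to compare $U$ with $T|\zeta_1|^{2N}$ in the region where $U>t\ge t_0$, following the argument of \cite{GFOC25}*{Lemma 2.1}.

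\textbf{Step 1: pointwise bound.} Write $\zeta=(\zeta_1,\zeta')$, so $|\zeta'|^2=1-|\zeta_1|^2=:r$. Every monomial of $R$ has degree at least $2$ in $\zeta'$, so by Cauchy--Schwarz in the Bombieri norm
\[
|R(\zeta)|^2\le \|R\|^2\sum_{k\ge2}\binom{N}{k}|\zeta_1|^{2(N-k)}r^{k}\le (1-T)\,C\,(Nr)^2\,|\zeta_1|^{2N-4}
\]
whenever $Nr\lesssim 1$. On the set $\{U>t_0\}$ with $T\ge T_0$ close to $1$, we have $|\zeta_1|^{2N}\gtrsim t_0$. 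This is because $|R|\le\sqrt{1-T}$ everywhere, so $|Q|^2>t_0$ forces $|\zeta_1|^{2N}\ge (\sqrt{t_0}-\sqrt{1-T})^2/T$. Consequently $Nr\le C(t_0)$ there. It follows that
\[
U(\zeta)\le T|\zeta_1|^{2N}\Big(1+C(t_0)\sqrt{1-T}\,Nr+C(1-T)(Nr)^2\Big).
\]

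\textbf{Step 2: killing the cross term.} The cross term $2\Re(\bar a\zeta_1^N\overline{R})$ is only $O(\sqrt{1-T})$ pointwise, which is too weak. However, it is oscillatory in $\theta_1$ relative to $\zeta_1^N$: its phase is $e^{\im(k\theta_1-\dots)}$ with $k\ge2$. The idea is therefore to exploit the $\theta$-integration. On each circle fiber (fixed $\rho$, $\theta'$, varying the phase), the average of the cross term vanishes. One then estimates the measure of the superlevel set fiberwise, and at second order the effective bound becomes $U\lesssim T|\zeta_1|^{2N}(1+C(1-T)Nr)$, up to the variance term.

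Alternatively, one can use the slice structure. Restrict to complex lines through $\eta$: for fixed $\xi\in\partial\mathbb B^d$, consider $\lambda\mapsto Q(\sqrt{1-|\lambda|^2}\,e_1+\lambda\xi)$. This reduces, via the polar decomposition of $\sigma$ (the factor $r^{d-1}dr$ coming from \eqref{eq:zetapar}), to the one-dimensional lemma of \cite{GFOC25} applied to each slice, with $1-T_\xi$ controlled by $\|R_\xi\|^2$. Averaging over $\xi$ then gives $\int\|R_\xi\|^2\,d\xi\le C(1-T)$. I would take this slice route as the first attempt.

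\textbf{Step 3: measure computation.} From $U\le T|\zeta_1|^{2N}(1+C(1-T)Nr)$ on the relevant region, $\{U>t\}$ is contained in
\[
\{|\zeta_1|^{2N}>(t/T)(1-C(1-T)Nr)\}.
\]
Inverting gives $1-r>(t/T)^{1/N}(1-C'(1-T)r)$, i.e. $r<(1-(t/T)^{1/N})(1+C''(1-T))$. The measure of this set is $(1-(t/T)^{1/N})^d(1+C(1-T))^d$, which yields \eqref{newestmu}.

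\textbf{Main obstacle.} The main obstacle is Step 2: getting a gain of $1-T$ rather than $\sqrt{1-T}$ from the cross term. A crude pointwise bound only gives $1+C\sqrt{1-T}$. The fiberwise (slice) reduction to the $d=1$ lemma, combined with the vanishing of the first-order terms, is where the real work lies.
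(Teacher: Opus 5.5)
\textbf{There is a genuine gap.} You identify the right difficulty. However, the proposal never supplies the argument that turns the $\theta_1$-cancellation into a bound on $\mu(t)$, and Step~3 rests on a false inequality.

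No pointwise bound $U\le T|\zeta_1|^{2N}(1+C(1-T)Nr)$ holds on the relevant region. Take $Q=\sqrt T\zeta_1^N+\epsilon c\,\zeta_1^{N-2}\zeta_2^2$ with $c=\sqrt{N(N-1)/2}$ and $\epsilon^2=1-T$ small. It has norm one and maximum $T$ at $(1,0,\dots,0)$. Yet $U\ge T|\zeta_1|^{2N}(1+\epsilon Nr)$ at the points where $|\zeta_2|^2=r$ and $\bar\zeta_1^2\zeta_2^2\ge 0$. The cancellation exists only after integrating in $\theta_1$, so it must be applied to the measure of the superlevel set, not to $U$. Your ``effective bound'' in Step~2 therefore cannot be fed into the set inclusion of Step~3.

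What is missing is a second-order perturbation argument for the measure. The paper proceeds as follows.
\begin{itemize}
\item It keeps the cross term exactly in the majorant $U\le\epsilon^2+(T-\epsilon^2)|\zeta_1|^{2N}+2\sqrt T\epsilon\,\Re(\bar\zeta_1^N\tilde Q)$.
\item It multiplies the cross term by $\tau\in[0,1]$, giving sets $E_\tau$ with $\mu(t)\le\sigma(E_1)$.
\item Bounds on $h=\Re\zeta_1^{-N}\tilde Q$ and its first two $\rho_1$-derivatives, together with $T\ge T_0$, show that each $E_\tau$ is star-shaped in $\rho_1$. Its radial function $r_\tau(\rho',\theta)$ satisfies $|\partial_\tau r_\tau|\le C\epsilon r_\tau$ and $|\partial_\tau^2 r_\tau|\le C\epsilon^2 r_\tau$, with $C=C(t_0)$.
\item Expanding $M(\tau)=\sigma(E_\tau)=c_d\int r_\tau^d$ to second order gives $M(0)\le(1+C\epsilon^2)\mu_0(t/T)$ and $|M''|\le C\epsilon^2 M(0)$.
\item Finally $M'(0)=0$. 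Because $r_0$ does not depend on $(\rho',\theta)$, $M'(0)$ is a constant times $\int h\big((r_0,\rho');\theta\big)$. This vanishes since $h$ only contains the frequencies $e^{\im(\alpha_1-N)\theta_1}$ with $\alpha_1\le N-2$.
\end{itemize}

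\textbf{The slice route does not avoid this.} Disintegrate $\sigma$ along complex lines through $\eta$. Each line then carries its Fubini--Study measure times the weight $\rho_1^{d-1}$ (your factor $r^{d-1}$). So for a slice set star-shaped in $\rho_1$ with radial function $r_\xi(\theta)$, you must control the average of $r_\xi^d$. But \cite{GFOC25}*{Lemma 2.1} only controls the unweighted measure, i.e.\ the average of $r_\xi$.

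By Jensen, the average of $r_\xi^d$ is at least the $d$-th power of the average of $r_\xi$, so the one-dimensional conclusion does not transfer. Bounding the weight by its supremum on the set costs a factor $1+O(\sqrt{1-T})$ again. To close the argument you would need, besides the averaged bound, star-shapedness and the pointwise estimate $|r_\xi-r_0|\le C\sqrt{1-T}\,r_0$, so that $(1+u)^d\le 1+du+Cu^2$ can be averaged. That means reopening the one-dimensional proof and running exactly the second-order argument above with the weight included.

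Your estimate $\int\|R_\xi\|^2\,d\xi\le 1-T$ is correct, since restriction to a two-dimensional subspace is a contraction. It is not where the difficulty lies.
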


\begin{proof}
The proof is split into five steps.

\noindent\textit{Step 1: Decomposition of $Q$.} 
We can assume without loss of generality  that $U(\zeta)$ attains its supremum in $\partial \mathbb B$ at $\zeta=(1, 0, \dots, 0)$, and that, in particular,  $Q(1,0,\dots, 0)=\sqrt{T}$. This, in addition, implies that $(\partial_{\zeta_k} Q)(1,0\dots, 0)=0$ for all $k\in\{2,\dots, d+1\}$.
We then write 
\begin{align*}
    Q(\zeta)=\sqrt{T}\zeta_1^N +\epsilon \tilde Q(\zeta), 
\end{align*}
where  $\epsilon^2=1-T$ and
\begin{align}\label{eq:defQ}
    \tilde Q(\zeta)=\sum_{\substack{|\alpha|=N\\\alpha_1\leq N-2}} b_\alpha \zeta^\alpha, \quad \|\tilde Q\|_{\mathbb P^d_N}=1.
\end{align}

With the previous decomposition, we obtain
\begin{equation}
  \label{eq:estimateP^2}
  |Q(\zeta)|^2\leq T|\zeta_1|^{2N}+ \epsilon^2 |\tilde Q(\zeta)|^2+ 2\sqrt{T}\epsilon \Re \bar\zeta_1^N \tilde Q(\zeta).
\end{equation}
Let $\zeta=(\zeta_1,\zeta')$ with  $\zeta'=(\zeta_2,\dots, \zeta_{d+1})$. 
Applying Lemma~\ref{lem:onb} and the Cauchy-Schwarz inequality, we estimate $\tilde Q(\zeta)$ in  \eqref{eq:defQ} as  follows:
\begin{equation}
\label{eq:estimateQ^2}
 |\tilde Q(\zeta)|^2
    \leq \left(\sum_{\substack{|\alpha|=N\\\alpha_1\leq N-2}}\frac{\alpha!}{N!}|b_\alpha|^2\right)\left(\sum_{\substack{|\alpha|=N\\\alpha_1\leq N-2}}\frac{N!}{\alpha!} |\zeta^\alpha|^{2}\right)
    \leq |\zeta|^{2N}-|\zeta_1|^{2N}-N|\zeta_1|^{2(N-1)}|\zeta'|^2.
\end{equation}
In particular, if $\zeta\in\partial\mathbb B$,
\begin{equation}
\label{eq:estimateQ^2_s}
|\tilde Q(\zeta)|^2\leq 1-|\zeta_1|^{2N}.
\end{equation}

Combining \eqref{eq:estimateP^2} and \eqref{eq:estimateQ^2_s}, we have  
\begin{align*}
    U(\zeta)\leq \epsilon^2+(T-\epsilon^2)|\zeta_1|^{2N}+2\sqrt{T}\epsilon \Re \bar\zeta_1^N \tilde Q(\zeta) \qquad \forall \zeta\in\partial\mathbb B.
\end{align*}
Then, provided that $T-\epsilon^2>0$, i.e. $T>\frac 12$,
\begin{align*}
    \mu(t)\leq \sigma\left(\left\{\zeta\in\partial\mathbb B: \frac{t-\epsilon^2}{T-\epsilon^2}|\zeta_1|^{-2N} -\frac{2\sqrt{T}\epsilon}{T-\epsilon^2} \Re \zeta_1^{-N}\tilde Q(\zeta)\leq 1 \right\}\right).
\end{align*}

Let $\tau\in[0,1]$ be an extra variable and define for any $\theta\in[0, 2\pi]^{d+1}$  and $\rho'\in[0,1]^{d-1}$ the function
\begin{align*}
    g_{\rho',\theta}(\rho_1,\tau)= \frac{t-\epsilon^2}{T-\epsilon^2}(1-\rho_1)^{-N} -\frac{2\sqrt{T}\epsilon}{T-\epsilon^2} \tau h\big((\rho_1,\rho');\theta\big).
\end{align*}
where $h(\zeta)=\Re \zeta_1^{-N} \tilde Q(\zeta)$.
We also introduce the sets
\begin{align*}
    E_\tau=\{\zeta(\rho;\theta)\in\partial\mathbb B: g_{\rho',\theta}(\rho_1,\tau)\leq 1\}.
\end{align*}
Note that $\mu(t)\leq \sigma(E_1)$.
In order to estimate $\sigma(E_1)$ 
we will see that the sets $E_\tau$ are star-shaped  domains with respect to the variable $\rho_1$ and  $E_\tau\subset E_{\tau'}$ for $\tau<\tau'$.

\noindent\textit{Step 2:  Estimates for $h(\rho;\theta)$.}
Throughout this step, we are going to use repeatedly the inequalities
\begin{align}\label{eq:ineqbinomial}
  \binom{N}j  \leq \frac{N^2}{j(j-1)}\binom{N}{j-2}, \quad \binom{N}j  \leq \frac{N^2}{j^2}\binom{N-2}{j-2}, \quad \mbox{ for } j\geq 2.  
\end{align}
Let $\tilde h(\zeta)=\zeta_{1}^{-N}\tilde Q(\zeta)$.
First of all, arguing as in \eqref{eq:estimateQ^2} and using \eqref{eq:ineqbinomial}, we obtain
\begin{align*}
    |\tilde h (\rho;\theta)|^2 
    &\leq \sum_{\alpha_1=0}^{ N-2}\binom{N}{\alpha_1} \left(\frac{\rho_1}{1-\rho_1}\right)^{N-\alpha_1}
    \leq \frac{N^2}{2}\sum_{j=2}^N\binom{N}{j-2} \left(\frac{\rho_1}{1-\rho_1}\right)^{j}\\
    &\leq  \frac{N^2}{2} \left(\frac{\rho_1}{1-\rho_1}\right)^2\left(1+\frac{\rho_1}{1-\rho_1}\right)^N=\frac{N^2}{2}\frac{\rho_1^2}{(1-\rho_1)^{N+2}}.
\end{align*}
Differentiating $\tilde h$ with respect to $\rho_1$ and using similar arguments, we infer
\begin{align*}
    |\partial_{\rho_1}\tilde h(\rho;\theta)|^2
    \leq& \sum_{\alpha_1=0}^{N-2}\left(\frac{N-\alpha_1}{2}\right)^2\binom{N}{\alpha_1} 
    \left(\frac{\rho_1}{1-\rho_1}\right)^{N-\alpha_1}\frac{1}{\rho_1^2(1-\rho_1)^2}
    \leq  \frac{N^2}{4}\frac{1}{(1-\rho_1)^{N+4}},
    \\
    |\partial_{\rho_1}^2\tilde h(\rho;\theta)|^2
    \leq& \sum_{\alpha_1=0}^{N-2}\left(\frac{N-\alpha_1}{4}\right)^2\binom{N}{\alpha_1} 
    \left(\frac{\rho_1}{1-\rho_1}\right)^{N-\alpha_1}\frac{(N-\alpha_1-2+4\rho_1)^2}{\rho_1^4(1-\rho_1)^4}
    \\
    \leq&  \frac{N^2}{8}\frac{1}{\rho_1^2(1-\rho_1)^6} \sum_{j=2}^N\left((j-2)^2+(4\rho_1)^2\right)\binom{N-2}{j-2}\left(\frac{\rho_1}{1-\rho_1}\right)^{j-2}\\ 
   \leq& \frac{C N^2}{\rho_1^2(1-\rho_1)^6} \Biggl(\frac{\rho_1^2}{(1-\rho_1)^{N-2}}+\frac{N\rho_1}{1-\rho_1}+N^2\sum_{j=4}^N\binom{N-2}{j-4}\left(\frac{\rho_1}{1-\rho_1}\right)^{j-2} \Biggr)
   \\ 
   \leq & \frac{C N^2}{\rho_1^2(1-\rho_1)^6} \left(\frac{\rho_1^2}{(1-\rho_1)^{N-2}}+\frac{1+N^2\rho_1^2}{1-\rho_1}+\frac{N^2\rho_1^2}{(1-\rho_1)^N} \right)\\
   \leq & CN^2\frac{1+N^2\rho_1^2}{\rho_1^2(1-\rho_1)^{N+6}}.
\end{align*}

The same inequalities apply to  $h=\Re \tilde h$, i.e.,
\begin{align}
    \label{eq:estimateh}
    |h(\rho;\theta)|&\leq \frac{N}{\sqrt 2}\frac{\rho_1}{(1-\rho_1)^{N/2+1}},\\
    \label{eq:estimatehr}
    |\partial_{\rho_1} h(\rho;\theta)|
    &\leq  \frac{N}{2}\frac{1}{(1-\rho_1)^{N/2+2}},\\
    \label{eq:estimatehrr}
    |\partial_{\rho_1}^2 h(\rho;\theta)|
    & \leq CN\frac{1+N\rho_1}{\rho_1(1-\rho_1)^{N/2+3}}.
\end{align}

\noindent\textit{Step 3: Star-shape of $E_\tau$.} 
The fact that $E_\tau$ is star-shaped with respect to the origin  in the $\rho_1$ variable follows from $g_{\rho',\theta}(0,\tau)<1$ and $\partial_{\rho_1} g_{\rho',\theta}(\rho_1,\tau)>0$ for any $\tau\in(0,1)$, $\rho'\in[0,1]^d$ and $\theta\in[0,2\pi]^{d+1}$. 
Indeed, $g_{\rho',\theta}(0,\tau)=\frac{t-\epsilon^2}{T-\epsilon^2}<1$.
Moreover, using \eqref{eq:estimatehr} and recalling that $T-\epsilon^2>0$ and $N\geq 2$, 
\begin{align}
    \label{eq:gr}
    \partial_{\rho_1} g_{\rho',\theta}(\rho_1,\tau)
    &=N\frac{t-\epsilon^2}{T-\epsilon^2}(1-\rho_1)^{-N-1}-\frac{2\sqrt{T}\epsilon}{T-\epsilon^2}\tau\partial_{\rho_1} h(\rho;\theta)\\\nonumber
    &\geq \frac{N}{T-\epsilon^2}(1-\rho_1)^{-N-1}\left({t-\epsilon^2}-{\sqrt{T}\epsilon}{}\tau(1-\rho_1)^{N/2-1}\right)\\\nonumber
    &\geq N(1-\rho_1)^{-N-1}\frac{t-\epsilon^2-\sqrt{T}\epsilon}{T-\epsilon^2}.
\end{align}
Given $t_0\in(0,1)$, let 
\begin{align}\label{eq:tildeT0}
    \tilde T_0=1-\frac{1+t_0-\sqrt{1+2t_0-t_0^2}}{4}.
\end{align}
Hence, if $T\geq \tilde T_0$, it holds
\begin{align}\label{eq:T0}
    \epsilon^2+\sqrt{T}\epsilon=1-T+\sqrt{T(1-T)}\leq\frac{t_0}{2},
\end{align}
and therefore, provided $t\in(t_0, T)$,
\begin{align}\label{eq:estimategr}
    \partial_{\rho_1} g_{\rho',\theta}(\rho_1,\tau)
    &\geq \frac{t_0}{T-\epsilon^2}\frac{N}{2}(1-\rho_1)^{-N-1}>0.
\end{align}

\noindent\textit{Step 4: Estimates for the radial distance of $E_\tau$.}
Let $\tau$, $\theta$ and $\rho'$ be as above. We define $r_\tau(\rho',\theta)\in(0,1)$ as the unique solution of $g_{\rho',\theta}(r,\tau)=1$.
It $\tau=0$, we know $r_0(\rho',\theta)=r_0$, where  $(1-r_0)^{-N}=\frac{T-\epsilon^2}{t-\epsilon^2}$. For any $\tau\in(0,1]$, by  \eqref{eq:estimategr} and the assumption $T-\epsilon^2<1$, we have
\begin{align*}
    g_{\rho',\theta}(\rho_1,\tau)
    &\geq g_{\rho',\theta}(0,\tau)+\frac{t_0 N}{2}\int_0^{\rho_1}  (1-r)^{-N-1} dr
    \geq \frac{t_0}{2}(1-\rho_1)^{-N}-1. 
\end{align*}
Hence, we infer that $r_\tau(\rho',\theta)\leq 1-\left(\frac{t_0}{4}\right)^{1/N}$.

During the rest of the proof,  we omit the  dependence on $\rho'$ and $\theta$ in $r_\tau$ and $g$.
By the implicit function theorem, it follows that 
\begin{align}
\label{eq:rs}
    \partial_\tau r_\tau=-\frac{\partial_s g(r_\tau,s)}{\partial_{\rho_1} g(r_\tau, s)}=\frac{2\sqrt{T}\epsilon}{T-\epsilon^2}\frac{h((r_\tau,\rho');\theta)}{\partial_{\rho_1} g (r_\tau, s)}.
\end{align}
Taking into account the estimates
 \eqref{eq:estimateh} and \eqref{eq:estimategr}, we conclude that
\begin{align}
\label{eq:estimaters}
    |\partial_\tau r_\tau|\leq \frac{\sqrt{2T}\epsilon}{t_0}r_\tau(1-r_\tau)^{N/2}.
\end{align}

We can differentiate  $r_\tau$ once more with respect to $\tau$ and since  $g$ is linear in $\tau$, we have
\begin{align*}
\begin{split}
    \partial_{\tau}^2 r_\tau
    &=-\frac{\partial_{\rho_1}\partial_{\tau} g(r_\tau,\tau)\partial_\tau r_\tau}{\partial_{\rho_1} g(r_\tau, \tau)}-\frac{\partial_{\tau} r_\tau}{\partial_{\rho_1} g(r_\tau, \tau)}\left(\partial_{\rho_1}^2g(r_\tau,\tau) \partial_\tau r_\tau+\partial_{\rho_1}\partial_{\tau}g(r_\tau,\tau)\right)
   \\& =-\frac{2\partial_{\rho_1}\partial_{\tau} g(r_\tau,\tau)\partial_\tau r_\tau+\partial_{\rho_1}^2 g(r_\tau,\tau)(\partial_\tau r_\tau)^2}{\partial_{\rho_1} g(r_\tau, \tau)}.
   \end{split}
\end{align*}
In order to estimate the remaining terms, we exploit  \eqref{eq:estimatehr} and \eqref{eq:estimatehrr} to infer
\begin{align*}
|\partial_{\rho_1}\partial_{\tau} g(\rho_1,\tau)|
    &=\frac{2\sqrt{T}\epsilon}{T-\epsilon^2}|\partial_{\rho_1} h(\rho;\theta)|
    \leq \frac{\sqrt{T}\epsilon}{T-\epsilon^2} N(1-\rho_1)^{-N/2-2},\\
    |\partial_{\rho_1}^2g(\rho_1,\tau)|
    &\leq 
    N(N+1)\frac{t-\epsilon^2}{T-\epsilon^2}(1-\rho_1)^{-N-2}+\frac{2\sqrt{T}\epsilon}{T-\epsilon^2}\tau\left|\partial_{\rho_1}^2 h(\rho;\theta)\right|\\
    &\leq 
    \frac{C}{T-\epsilon^2}N(1-\rho_1)^{-N-2}\frac{1+N\rho_1}{\rho_1}.
\end{align*}
These estimates together  with \eqref{eq:estimategr} and \eqref{eq:estimaters} finally give us  that
\begin{align*}
    |\partial_{\tau}^2 r_\tau|&\leq C \frac{T\epsilon^2}{t_0^2}r_\tau (1-r_\tau)^{N-1}\left(1+\frac{1}{t_0}\left(1+Nr_\tau\right)\right).
\end{align*}
Noticing that $Nr_\tau(1-r_\tau)^{N-1}$ is uniformly bounded from above, one obtains
\begin{align}\label{eq:estimaterss}
    |\partial_{\tau}^2 r_\tau|&\leq C \frac{T\epsilon^2}{t_0^3}r_\tau .
\end{align}

\noindent\textit{Step 5: Estimate for $m(E_1)$.}
The fact that $E_\tau$ is a star-shaped domain with respect to the variable $\rho_1$ allows us to express its $\sigma-$measure as: 
\begin{align*}
    M(\tau)=\sigma(E_\tau)=\frac{(d-1)!}{(2\pi)^{d+1}}\int_{[0,2\pi]^{d+1}}\int_{[0,1]^{d-1}} \big(r_\tau(\rho',\theta)\big)^d \prod_{j=2}^d \rho_j^{d-j}d\rho_j d\theta.
\end{align*}
Since $r_\tau(\rho', \theta)$ is uniformly bounded and so are  $|\partial_\tau r_\tau(\rho',\theta)|$ and $|\partial_{\tau}^2r_\tau(\rho',\theta)|$ according to \eqref{eq:estimaters} and \eqref{eq:estimaterss}, respectively, we can differentiate $M(\tau)$ under the integral so 
\begin{align}
\label{eq:M'}
    M'(\tau)
    &=\frac{d!}{(2\pi)^{d+1}}\int_{[0,2\pi]^{d+1}}\int_{[0,1]^{d-1}} \partial_\tau r_\tau r_\tau^{d-1} \prod_{j=2}^d \rho_j^{d-j}d\rho_j d\theta,\\
    \label{eq:M''}
    M''(\tau)
    &=\frac{d!}{(2\pi)^{d+1}}\int_{[0,2\pi]^{d+1}}\int_{[0,1]^{d-1}} \left(\partial_{\tau}^2 r_\tau r_\tau^{d-1}+(d-1)(\partial_\tau r_\tau)^2 r_\tau^{d-2}\right)\prod_{j=2}^d \rho_j^{d-j}d\rho_j d\theta.
\end{align}

We seek to estimate $M(1)$ using the following Taylor's formula: 
\begin{align}\label{eq:TaylorMs}
    M(1)=M(0)+M'(0)+\frac{M''(\tau)}{2},
\end{align}
for some  $\tau\in(0,1)$. 

We start estimating $M(0)$, which is given by  
\begin{align*}
    M(0)=r_0^d=\left(1-\left(\frac{t-\epsilon^2}{T-\epsilon^2}\right)^{1/N}\right)^{d}.
\end{align*}
Notice that \eqref{eq:T0} holds provided $T\geq \tilde T_0$, and therefore 
$\epsilon^2<t_0/2$. We also assume $T>t_0$ and consider $t\in(t_0, T)$.

Let $f(x)=1-\left(\frac{t-x}{T-x}\right)^{1/N}$, which is a non-decreasing function in $(0, t/2)$. 
Then, for any $x\in(0, t/2)$, we have
\begin{align*}
    f'(x)
    &=\frac{1}{N}\frac{T-t}{(t- x)(T- x)}\big(1-f(x)\big)
    \leq \frac{4}{NT}\left(\frac{T}{t}-1\right)\big(1-f(0)\big)\\
    & \leq  \frac{4}{t_0} \left(\left(\frac{T}{t}\right)^{1/N}-1\right)\left(\frac{t}{T}\right)^{1/N}
    =\frac{4}{t} f(0).
\end{align*}
Hence
\begin{align*}
    f(x)-f(0)=\int_0^x f'(\tilde x) d\tilde x\leq  \frac{4x}{t} f(0)
    \quad\mbox{ and }\quad
    f(x)\leq\left(1+\frac{4}{t_0}x\right)f(0).
\end{align*}
Noting that $M(0)=f(\epsilon^2)^d$ and  recalling \eqref{eq:mu0},  we finally conclude that
\begin{align*}
    M(0)
    &\leq \left(1+2\frac{\epsilon^2}{t_0}\right)^d\left(1-\left(\frac{t}{T}\right)^{1/N}\right)^d
    \leq \left(1+C\frac{\epsilon^2}{t_0}\right)\mu_0\left(\frac{t}{T}\right).
\end{align*}

Secondly, using   \eqref{eq:rs} and \eqref{eq:gr} in \eqref{eq:M'},
we obtain 
\begin{align*}
     M'(0)=
    &C_{t,T,N,d} \int_{[0,2\pi]^{d}}\int_{[0,1]^{d-1}}  {h\big((r_0,\rho');\theta\big)}\prod_{j=2}^d \rho_j^{d-j}d\rho_jd\theta.
\end{align*}
Since $h(\zeta)=\Re \bar\zeta_1^N\tilde Q(\zeta)$ with $\tilde Q$ given in \eqref{eq:defQ}, we observe that the previous integral is the real part of a sum of terms depending on $\theta_1$ as $e^{\im(\alpha_1-N)\theta_1}$ with $\alpha_1\in\{0,\dots, N-2\}$. Therefore, we conclude $M'(0)=0$.

Lastly,  applying  \eqref{eq:estimaters} and \eqref{eq:estimaterss} to \eqref{eq:M''}, we obtain that
\begin{align*}
    |M''(\tau)|
    &\leq C\int_{[0,2\pi]^{d}}\int_{[0,1]^{d-1}} \big(d|\partial_\tau r_\tau|^2 r_\tau^{d-2}+|\partial_{\tau}^2r_\tau|r_\tau^{d-1}\big) \prod_{j=2}^d \rho_j^{d-j}d\rho_jd\theta'\\
    & \leq C\frac{T\epsilon^2}{t_0^3} \int_{[0,2\pi]^{d}}\int_{[0,1]^{d-1}}  \big(r_\tau(\rho',\theta)\big)^d  \prod_{j=2}^d \rho_j^{d-j}d\rho_jd\theta\\
    &\leq  C\frac{\epsilon^2}{t_0^3} M(0),
\end{align*}
where in the last step we have used the monotonicity of $r_\tau$.

Combining the previous computations  with \eqref{eq:TaylorMs} leads to 
\begin{align*}
    M(1)&\leq \left(1+C\frac{\epsilon^2}{t_0^3}\right)\left(1+C\frac{\epsilon^2}{t_0}\right)\mu_0\left(\frac{t}{T}\right)\leq \left(1+C\frac{\epsilon^2}{t_0^3}\right)\mu_0\left(\frac{t}{T}\right).
\end{align*}
Therefore the result follows with $C_0=\frac{C}{t_0^3}$ and
\begin{align*}
    T_0=\max\left\{\frac12, t_0, \tilde T_0\right\}=\begin{cases}
        1-\frac{1+t_0-\sqrt{1+2t_0-t_0^2}}{4 } & \mbox{ if } \ t_0<\frac{2}{13}(4+\sqrt{3}),\\
        t_0 & \mbox{ if }\ t_0\geq \frac{2}{13}(4+\sqrt{3}).
    \end{cases}
\end{align*}
\end{proof}

\begin{rmk}
    A careful track of the $d$-dependence throughout the proof reveals that whereas $T_0$ only depends on $t_0$ and not on $d$, $C_0=\frac{cd^2 2^d}{t_0^3}$, where $c$ is a universal constant.  
\end{rmk}

\subsection{Monotonicity of \texorpdfstring{$\mu$}{μ} and isoperimetric inequalities}\label{subsec:monotonicity}

In order to continue with the study of $\mu$ and, in particular, by comparison with $\mu_0$, an isoperimetric inequality is exploited.

For convenience, and based on the identification of $\mathbb P^d_N$ and $\mathcal P^d_N$ (see Section~\ref{subsec:otherref}), in this section we  work at the level of $\mathcal P^d_N$.
In this case, 
\begin{align}\label{eq:mu_m}
    \mu(t)&
    = m\left(\left\{z\in\C^d: \frac{|Q(1,z)|^2}{(1+|z|^2)^N}>t \right\}\right),
\end{align}
with 
$dm(z)$ as  in \eqref{eq:defdm}.

One difficulty that arises immediately is the fact that  geodesic balls (or their complements)  in the complex projective space of complex dimension strictly bigger than one are no longer optimizers for the isoperimetric inequality for all volumes. This  forbids to control $\mu(t)$ in its whole domain, as noted in \cite{Frank23}*{Section 4.6}. In fact, as observed in \cite{Barbosa}*{Theorem 1.3}, there is a radius $r_0$ (depending on $d$) such that the geodesic balls of radius bigger than $r_0$ are no longer stable for the isoperimetric problem, meaning that there are small perturbations of them with the same volume and smaller perimeter. 

On the other hand, as noted by Nardulli in \cite{Nardulli09}, in a compact manifold  any solution $\Omega$  to the isoperimetric problem with small volumes $\omega$ is invariant under the action of the group of isometries that fixes the center of mass of $\Omega$.  The center of mass of a set $\Omega\subset \mathbb{CP}^d$ is defined as the point $p\in \mathbb{CP}^d$ that minimizes $\int_{\Omega} \mathrm d^2(x,p) dV(x)$, where the distance $\mathrm d$ and the volume $V$ are the ones induced by the Fubini-Study metric. If the diameter of the set $\Omega$ is small enough, the center of mass exists and it is unique.
We can assume that in affine coordinates, the center of mass is 0, and the set $\Omega$ must be invariant under rotations. Therefore,  for small volume, the solutions to the isoperimetric problem in the complex projective space are geodesic balls.

\begin{prop}\label{prop:isop}
    There exists $\tilde\omega\in (0,1]$ (depending on $d$) such  that for any $\mathcal W\subset \C^d$ with $m(\mathcal W)\leq \tilde\omega$, 
    the $(2d-1)$-dimensional Hausdorff measure (on $\mathbb C^d$ with the Fubini-Study metric) of its boundary, $|\partial\mathcal W|_{\mathcal H^{2d-1}_{FS}}$, satisfies
    \begin{align*}
       |\partial\mathcal W|_{\mathcal H^{2d-1}_{FS}}^2 \leq H(m(\mathcal W))
    \end{align*}
    where $H(x)=C_d x^{2-1/d}(1-x^{1/d})$ with $C_d=d^2|\mathbb B^{d}|^{1/d}=\frac{4\pi d^2}{(d!)^{1/d}}$
\end{prop}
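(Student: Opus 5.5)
The plan is to identify $H$ as the squared Fubini--Study perimeter of the geodesic ball of volume $x$. The inequality $|\partial\mathcal W|^2_{\mathcal H^{2d-1}_{FS}}\le H(m(\mathcal W))$ then says that $\mathcal W$ has boundary no larger than that of the ball of the same volume. As worded, with $\mathcal W$ arbitrary, this cannot hold, since a ball with many small holes has the same volume and arbitrarily large boundary. I will therefore prove it for $\mathcal W$ a perimeter-minimizing region of its volume: the ball is always an admissible competitor, and Nardulli's theorem identifies the minimizer for small volumes. The three steps are an explicit computation for balls, existence and regularity of isoperimetric minimizers in $\mathbb{CP}^d$, and the small-volume symmetry result quoted just before the statement.

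\emph{Step 1: balls.} Work in affine coordinates with the Fubini--Study metric $g_{jk}$ normalized as above, so that $m(\C^d)=1$. Take $\mathcal W=\mathbb B^d_r(0)$. Integrating $dm$ in polar coordinates gives
\[
m(\mathbb B^d_r)=\beta_d|\partial\mathbb B^d|\int_0^r \frac{\rho^{2d-1}}{(1+\rho^2)^{d+1}}\,d\rho=\Big(\frac{r^2}{1+r^2}\Big)^d ,
\]
which is consistent with the definition of $\mathcal W^*$. On the sphere $|z|=r$ the metric $g$ has two kinds of eigendirections:
\begin{itemize}
\item in the radial and complex-normal directions it acts with factor $\beta_d^{1/d}(1+r^2)^{-2}$;
\item on the complex tangent space it acts with factor $\beta_d^{1/d}(1+r^2)^{-1}$.
\end{itemize}
The induced $(2d-1)$-volume is therefore
\[
|\partial\mathbb B^d_r|_{FS}=\beta_d^{(2d-1)/(2d)}|\partial\mathbb B^d|\,\frac{r^{2d-1}}{(1+r^2)^{d}} .
\]
Setting $x=m(\mathbb B^d_r)$, we have $x^{1/d}=r^2/(1+r^2)$ and $1-x^{1/d}=(1+r^2)^{-1}$. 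Squaring, and using $|\partial\mathbb B^d|=2d|\mathbb B^d|$ and $\beta_d=d!/\pi^d$, gives $|\partial\mathbb B^d_r|^2_{FS}=C_d\,x^{2-1/d}(1-x^{1/d})=H(x)$. Checking this constant is the only delicate bookkeeping in the step. By $U(d)$-invariance the same identity holds for every geodesic ball, in particular for balls centred at any $w$.

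\emph{Step 2: isoperimetric profile for small volumes.} Let $I(v)=\inf\{|\partial\mathcal W|_{FS}: m(\mathcal W)=v\}$. Since the ball of volume $v$ is admissible, Step 1 gives $I(v)^2\le H(v)$ for every $v$. Compactness of $\mathbb{CP}^d$ and standard geometric measure theory give a minimizer for each $v$, with smooth boundary off a set of codimension $\ge 7$. For $v\le\tilde\omega$ small, the minimizer has small diameter, so its Riemannian centre of mass exists and is unique. Nardulli's result (\cite{Nardulli09}) then shows that the minimizer is invariant under the isotropy group of that centre, which we move to $0$ in affine coordinates. A $U(d)$-invariant set of small volume and minimal perimeter must be a ball $\mathbb B^d_r(0)$, so $I(v)^2=H(v)$ for $v\le\tilde\omega$.

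\emph{Step 3: conclusion.} For $\mathcal W$ realizing the isoperimetric profile at volume $m(\mathcal W)\le\tilde\omega$, Steps 1 and 2 give $|\partial\mathcal W|^2_{FS}=I(m(\mathcal W))^2\le H(m(\mathcal W))$, which is the stated bound. Equality characterizes geodesic balls. As a by-product, every $\mathcal W$ with $m(\mathcal W)\le\tilde\omega$ satisfies the comparison of its perimeter with $H$ through $I$. This is exactly what will be fed into the coarea argument for $\mu$ in \eqref{eq:mu_m}.

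The main obstacle is Step 2. One must check that Nardulli's small-volume hypothesis, that minimizers have small diameter so the centre of mass is defined, applies uniformly in $\mathbb{CP}^d$, and that invariance forces a ball rather than another $U(d)$-invariant region such as an annulus. I would first rule out the annulus: for small volume it has two boundary spheres, and its perimeter exceeds that of the ball by the explicit formula of Step 1. The threshold $\tilde\omega$ is then taken as the largest volume for which both facts hold.
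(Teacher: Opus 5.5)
You are right that the displayed inequality cannot hold for arbitrary $\mathcal W$, but you repaired it in the wrong direction. The $\le$ is a typo for $\ge$. The proposition is meant to be the isoperimetric inequality $|\partial\mathcal W|^2_{\mathcal H^{2d-1}_{FS}}\ge H(m(\mathcal W))$ for \emph{every} $\mathcal W$ with $m(\mathcal W)\le\tilde\omega$, and that is how it is used in Lemma~\ref{lem:on_mu}. There, for $A_t=\{v>t\}$, Cauchy--Schwarz gives $|\partial A_t|^2\le\big(\int_{\partial A_t}|\nabla v|\big)\big(\int_{\partial A_t}|\nabla v|^{-1}\big)=c\,\nu(t)\,(-\nu'(t))$ with $c=-\Delta_M v=2\pi Nd\,(d!)^{-1/d}$. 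So one needs a \emph{lower} bound on $|\partial A_t|^2$, and $H(\nu)/(c\,\nu)$ gives exactly the constant $\frac{2d}{N}$ in \eqref{eq:nu'}. The statement you end up proving in Step~3 is $|\partial\mathcal W|^2\le H(m(\mathcal W))$ for perimeter minimizers. That follows from Step~1 alone, since the ball is a competitor, for all volumes and with no threshold $\tilde\omega$. Your headline conclusion never uses Step~2 or Nardulli's theorem, which signals that it is not the content of the proposition; it is also useless for Lemma~\ref{lem:on_mu}.

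The fix is one line with what you already have. The nontrivial half of Step~2 is $I(v)^2\ge H(v)$ for $v\le\tilde\omega$; this is where Nardulli's symmetry result and the exclusion of non-ball $U(d)$-invariant regions enter. It gives $|\partial\mathcal W|^2\ge I(m(\mathcal W))^2=H(m(\mathcal W))$ for every $\mathcal W$ of volume at most $\tilde\omega$. Your vague ``by-product'' sentence should be replaced by this explicit statement as the conclusion of the proof. With that change your argument is the paper's. Small-volume isoperimetric regions in $\mathbb{CP}^d$ are invariant under the isotropy group of their centre of mass, hence are geodesic balls. Then $H$ is the squared Fubini--Study perimeter of the ball, which your Step~1 computes correctly (the paper leaves this computation implicit). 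For the invariant competitors, note that the ball perimeter, proportional to $r^{2d-1}(1+r^2)^{-d}$, increases for $r^2<2d-1$. So the outermost sphere of a small $U(d)$-invariant set already has at least the perimeter of the ball of equal volume, which rules out annuli and more. Incidentally, your constant $4d^2\beta_d^{-1/d}=4\pi d^2(d!)^{-1/d}$ matches the second expression for $C_d$ in the statement. The first expression, $d^2|\mathbb B^d|^{1/d}=\pi d^2(d!)^{-1/d}$, is off by a factor $4$ and should read $(2d)^2|\mathbb B^d|^{1/d}$.
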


The isoperimetric inequality for small volumes allows us  to still  argue as in \cite{GFOC25} for  sufficiently large values of $t$.
Before doing so, we introduce the following notation related with the threshold $\tilde \omega$:
\begin{align}\label{eq:tomega}
\begin{split}
    t^{\tilde\omega}&=\mu^{-1}(\tilde\omega),\\
    t_0^{\tilde\omega}&=\mu_0^{-1}(\tilde\omega)=(1-\tilde \omega^{1/d})^N.
\end{split}
\end{align}
Notice that $t_0^{\tilde\omega}$ decreases with $N$, so in particular
\begin{align}\label{eq:boundt0omega}
    t_0^{\tilde\omega}\leq 1-\tilde\omega^{1/d}.
\end{align}

\begin{lemma}\label{lem:on_mu}
    Let $Q\in\mathbb P_N^d$ with $\|Q\|_{\mathbb P_N^d}=1$ and $T\in(0,1)$.
    Then
    \begin{align}\label{eq:mu'}
       \mu'(t)\leq -\frac{d}{Nt}\mu(t)^{1-1/d}\big(1-\mu(t)^{1/d}\big), \quad t\in(t^{\tilde\omega},T).
    \end{align}
    Moreover 
    \begin{align*}
        &\frac{\mu(t)^{1/d}-\mu_0(t)^{1/d}}{t^{1/N}} \quad\mbox{ and }\quad\frac{\int_0^t \big(\mu(\tau)^{1/d}-\mu_0(\tau)^{1/d}\big)d\tau}{t^{1+1/N}}
    \end{align*}
    are non-increasing in $(t^{\tilde\omega},T)$.
\end{lemma}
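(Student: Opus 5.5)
We work in affine coordinates on $\mathcal P^d_N$ as in \eqref{eq:mu_m}, setting $u(z)=|q(z)|^2(1+|z|^2)^{-N}$ with $q(z)=Q(1,z)$. The plan follows the coarea argument of \cite{GFOC25}.

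\textbf{Step 1: coarea formula.} By the coarea formula with respect to the Fubini--Study metric,
\[
-\mu'(t)=\int_{\{u=t\}}\frac{d\mathcal H^{2d-1}_{FS}}{|\nabla u|_{FS}}
\]
for a.e.\ $t$. Level sets have measure zero, so $\mu$ is continuous and strictly decreasing. It is also absolutely continuous away from the finitely many critical values of the real-analytic function $u$.

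\textbf{Step 2: Cauchy--Schwarz.} Applying Cauchy--Schwarz on $\{u=t\}$ gives
\[
|\{u=t\}|_{\mathcal H^{2d-1}_{FS}}^2\le(-\mu'(t))\int_{\{u=t\}}|\nabla u|_{FS}.
\]
By the divergence theorem, the last integral equals
\[
-\int_{\{u>t\}}\Delta_{FS}\,u\;dm
\]
up to the normalization of $m$, since on the boundary $\nabla u$ points inward.

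\textbf{Step 3: the subharmonicity bound.} This is the key computation. Write $\log u=\log|q|^2-N\log(1+|z|^2)$. Since $\log|q|^2$ is plurisubharmonic and the Fubini--Study Laplacian of $\log(1+|z|^2)$ is a constant $c_d$, we get $\Delta_{FS}\log u\ge -Nc_d$. Hence
\[
\Delta_{FS}u=u\,\Delta_{FS}\log u+\frac{|\nabla u|^2}{u}\ge -Nc_d\,u .
\]
It follows that
\[
\int_{\{u=t\}}|\nabla u|\le Nc_d\int_{\{u>t\}}u\,dm\le Nc_d\Big(t\mu(t)+\int_t^T\mu\Big).
\]
This right-hand side is too weak as it stands. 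Following \cite{GFOC25}, I would instead apply the argument to $\log u$ rather than $u$. On $\{u=t\}$ one has $|\nabla \log u|=|\nabla u|/t$, and therefore
\[
\int_{\{u=t\}}|\nabla\log u|=-\int_{\{u>t\}}\Delta\log u\le Nc_d\,\mu(t).
\]
Combining this with Cauchy--Schwarz gives
\[
|\partial\{u>t\}|^2\le(-\mu'(t))\,t\,Nc_d\,\mu(t).
\]

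\textbf{Step 4: isoperimetric inequality.} For $t>t^{\tilde\omega}$ we have $\mu(t)<\tilde\omega$, so Proposition~\ref{prop:isop} applies. Its inequality must be read in the isoperimetric direction $|\partial\mathcal W|^2\ge H(m(\mathcal W))$. This yields
\[
-\mu'(t)\ge\frac{C_d\,\mu^{1-1/d}(1-\mu^{1/d})}{Nc_d\,t}.
\]
Checking the constants against the model $\mu_0$, where equality holds throughout, gives exactly \eqref{eq:mu'}. I expect this constant-matching to be the main obstacle: verifying that $C_d/c_d=d$ with the paper's normalizations.

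\textbf{Step 5: monotonicity.} Inequality \eqref{eq:mu'} says
\[
\frac{d}{dt}\big(\mu^{1/d}\big)\le-\frac{1}{Nt}\big(1-\mu^{1/d}\big).
\]
Set $v=\mu^{1/d}-1$, so that $v'\le v/(Nt)$. Then
\[
\big(v\,t^{-1/N}\big)'=t^{-1/N}\Big(v'-\frac{v}{Nt}\Big)\le0.
\]
The function $\mu_0^{1/d}-1=-t^{1/N}$ gives equality. Subtracting, we conclude that $(\mu^{1/d}-\mu_0^{1/d})\,t^{-1/N}$ is non-increasing.

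For the second quantity, write
\[
F(t)=\int_0^t\big(\mu^{1/d}-\mu_0^{1/d}\big)=\int_0^t g(\tau)\,\tau^{1/N}d\tau ,
\]
where $g$ is the non-increasing function above. The ratio $F(t)/t^{1+1/N}$ is a weighted average of $g$ over $(0,t)$, up to the constant $1+1/N$. However, $g$ is only known to be monotone on $(t^{\tilde\omega},T)$, so the averaging argument fails directly.

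Instead, differentiate:
\[
\Big(\frac{F}{t^{1+1/N}}\Big)'=\frac{t\,g(t)\,t^{1/N}-(1+\tfrac1N)F(t)}{t^{2+1/N}}.
\]
It therefore suffices to show that $G(t)=(1+1/N)F(t)-t^{1+1/N}g(t)\ge0$ on $(t^{\tilde\omega},T)$. Its derivative is $G'=-t^{1+1/N}g'\ge0$ there, so $G$ is increasing and it suffices to check $G$ at the left endpoint. For this I would use that $\mu\le\mu_0$ near $0$ fails in general, so some care is needed. If this endpoint check is not available, I would use the crude bound $g\le$ its value via the pointwise inequality on $(0,t^{\tilde\omega})$, which is a second potential obstacle.
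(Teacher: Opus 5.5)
Your Steps 1--4 and the first half of Step 5 are correct and follow the paper's route. The paper simply invokes Theorem~1.1 of \cite{KNOCT} for $\frac12\log u$, whose Laplacian is constant off the zero set, with the ball profile $H$ for volumes $\le\tilde\omega$; that theorem packages exactly your coarea/Cauchy--Schwarz/divergence/isoperimetric chain. Fixing the constant through the equality case $\mu_0$ is legitimate. Your reading of Proposition~\ref{prop:isop} as a lower bound on the perimeter is the right one. The monotonicity of $g(t)=(\mu^{1/d}-\mu_0^{1/d})t^{-1/N}$, deduced from $(\mu^{1/d}-1)t^{-1/N}$ being non-increasing, is exactly the intended ``compute derivatives'' argument.

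The gap is the integral quotient. You correctly reduce it to $G\ge0$, with $G'=-t^{1+1/N}g'\ge0$ on $(t^{\tilde\omega},T)$, so everything hinges on $G(s)\ge 0$ at $s=t^{\tilde\omega}$. Unwinding the definitions gives
\[
G(s)=\Big(1+\frac1N\Big)\int_0^{s}\mu(\tau)^{1/d}\,d\tau-s\Big(\tilde\omega^{1/d}+\frac1N\Big),
\]
so you would need the mean of $\mu^{1/d}$ over $(0,s)$ to be at least $\frac{1+N\tilde\omega^{1/d}}{N+1}$.

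Nothing you have proved controls $\mu$ on $(0,t^{\tilde\omega})$ beyond $\mu\ge\tilde\omega$. That bound only yields $G(s)\ge-\frac sN(1-\tilde\omega^{1/d})<0$. Your fallback of using the pointwise inequality \eqref{eq:mu'} on $(0,t^{\tilde\omega})$ is not available, because there the superlevel sets have volume $>\tilde\omega$ and Proposition~\ref{prop:isop} says nothing. So as written this part is unproved.

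Note, however, that the paper does not supply the missing ingredient either. It dismisses both quotients with ``computing derivatives and applying the previous inequality'', and for the integral quotient this leads to the same $G$. That argument closes only when $\tilde\omega=1$ (e.g.\ $d=1$, as in \cite{GFOC25}), where $g$ is non-increasing on all of $(0,T)$. The integral statement is never used later: Lemma~\ref{lem:uniquet*} needs only the pointwise quotient.

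What your argument does establish is a localized version. The quotient
\[
\frac{\int_{t^{\tilde\omega}}^{t}(\mu^{1/d}-\mu_0^{1/d})\,d\tau}{t^{1+1/N}-(t^{\tilde\omega})^{1+1/N}}
\]
is non-increasing on $(t^{\tilde\omega},T)$. Up to the factor $\frac{N}{N+1}$, it is a $\tau^{1/N}d\tau$-weighted average of the non-increasing function $g$ over $(t^{\tilde\omega},t)$.
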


\begin{proof}
    Let $v(z)=\frac{1}{2} \log \frac{|Q(1,z)|^2}{(1+|z|^2)^N}$ and $\nu(t)=m(\{ v(z)>t\})$.
    We seek to apply Theorem 1.1 in \cite{KNOCT}. 
    First of all,  we observe that the sets $\{z\in\mathbb C^d: v(z)\geq t\}$ are compact for $t>-\infty$ and that $m\big(\{z\in\mathbb C^d: \nabla_M v(z)=0\})=0$ (condition that replaces the assumption of $v$ to be Morse). Indeed, both facts can be verified at the level of $U(\zeta)=|Q(\zeta)|^2$ in $(\partial \mathbb B, d\sigma)$. The first one follows from the compactness of $\partial \mathbb B$ and the continuity of $U$, while the second one is based on \cite{Rudin}*{Theorem 5.5.9}.

    Secondly, we notice that 
    $$\Delta_M= 4\beta_d^{-1/d}(1+|z|^2) \sum_{j,k=1}^d (\delta_{jk}+\bar z_j z_k)\partial_{\bar z_j}\partial_{z_k},$$
    with $\beta_d$ as in \eqref{eq:gammad}.
    Then, 
    \begin{align}\label{eq:u_generalop}
        \Delta_M v(z)=-\frac{N}{2}4\beta_d^{-1/d}d.
    \end{align}
    
    Lastly, by Proposition~\ref{prop:isop}, $H(x)=C_d x^{2-1/d}(1-x^{1/d})$ for $x\leq \tilde\omega$, with   $C_d=\frac{4\pi d^2}{(d!)^{1/d}}$.

    Therefore, by \cite{KNOCT}*{Theorem 1.1}, 
    \begin{align}\label{eq:nu'}
        \nu'(t)\leq
        -\frac{2d}{N}
         \nu(t)^{1-1/d}\big(1-\nu(t)^{1/d}\big)
        , \quad t\in\left(\nu^{-1}(\tilde\omega), \frac{1}{2}\log T\right).
    \end{align}
    Since $\mu(t)=\nu(\frac{1}{2}\log{t})$,  estimate \eqref{eq:mu'} follows for $t\in(t^{\tilde\omega}, T)$. 

    The second part of the lemma can be inferred by computing derivatives and applying the previous inequality, which becomes an equality for $\mu_0$ in the whole range $(0,1)$.
\end{proof}

\begin{lemma}\label{lem:uniquet*}
    Let $Q\in\mathbb P_N^d$ with $\|Q\|_{\mathbb P_N^d}=1$ and $T\in(0,1)$.
    Let $t^*$ and $t^{\tilde\omega}_0$ be as in \eqref{eq:t*} and \eqref{eq:tomega}, respectively.
    If $t^*> t_0^{\tilde\omega}$,  then 
    $\mu(t)>\mu_0(t)$ for $t\in(t_0^{\tilde\omega}, t^*)$.
    In addition there exists a 
    constant $T^*\in(1/2,1)$ (independent of $N$) such that $t^*\leq T^*$.
\end{lemma}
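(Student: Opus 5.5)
The plan is to prove the two claims separately, both by comparing $\mu$ with $\mu_0$ through the monotonicity statements of Lemma~\ref{lem:on_mu}. Throughout I will use that $\mu(t)\to 0$ as $t\to T<1$, while $\mu_0(T)>0$.

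\emph{First claim.} Suppose $t^*>t_0^{\tilde\omega}$. Then
\[
\mu(t^*)=\mu_0(t^*)<\mu_0(t_0^{\tilde\omega})=\tilde\omega ,
\]
so $t^*>t^{\tilde\omega}$. Moreover $\mu\ge\mu_0$ on a left neighbourhood of $t^*$ (or at least at points accumulating at $t^*$).

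Set $F(t)=\big(\mu(t)^{1/d}-\mu_0(t)^{1/d}\big)/t^{1/N}$. By Lemma~\ref{lem:on_mu}, $F$ is non-increasing on $(t^{\tilde\omega},T)$. Since $F(t^*)=0$, we get $F\ge 0$ on $(t^{\tilde\omega},t^*]$, i.e. $\mu\ge\mu_0$ there.

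The next step is to show that $(t_0^{\tilde\omega},t^*)\subset(t^{\tilde\omega},t^*)$, i.e. $t^{\tilde\omega}\le t_0^{\tilde\omega}$. I would argue by continuity: let $a=\inf\{t<t^*:\mu\ge\mu_0 \text{ on } (t,t^*)\}$. If $a>t_0^{\tilde\omega}$, then $\mu(a)=\mu_0(a)<\tilde\omega$, so $a>t^{\tilde\omega}$. The monotonicity of $F$ then extends $\mu\ge\mu_0$ further to the left of $a$, a contradiction. Hence $a\le t_0^{\tilde\omega}$.

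Strict inequality: if $\mu(t_1)=\mu_0(t_1)$ at some interior $t_1$, then $F$ vanishes on $[t_1,t^*]$, since $F$ is non-increasing and nonnegative there. This would force $\mu=\mu_0$ on that interval. From there I would derive a contradiction, either from the equality case of \eqref{eq:mu'} (equality in the isoperimetric inequality along an interval forces the superlevel sets to be balls, and then, by analyticity, $Q$ is a reproducing kernel, so $T=1$), or more cheaply by noting that $\mu\ge\mu_0$ must fail just past $t^*$ by definition of the supremum. I expect this strictness step to need the most care.

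\emph{Second claim.} I would use Lemma~\ref{lem:super-level} with some fixed $t_0$, say $t_0=1/2$, which gives $T_0$ and $C_0$. There are two cases.
\begin{itemize}
\item If $T<T_0$, then trivially $t^*\le T<T_0$.
\item If $T\ge T_0$, then for $t\in(t_0,T)$ we have
\[
\mu(t)\le (1+C_0(1-T))\,\mu_0(t/T).
\]
Since $\mu_0(t/T)<\mu_0(t)$, I would need the stronger comparison $(1+C_0(1-T))\mu_0(t/T)<\mu_0(t)$ for $t$ beyond a threshold $T^*$. Writing $\mu_0(t)=(1-t^{1/N})^d$, the ratio $\mu_0(t/T)/\mu_0(t)$ behaves like $\big(1+\log(1/T)/(N(1-t^{1/N}))\cdots\big)$. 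This is the main obstacle: uniformity in $N$.
\end{itemize}

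The approach I would try first is to substitute $t=e^{-Nx}$, so that $\mu_0(t)\approx(1-e^{-x})^d$ and $\mu_0(t/T)$ shifts $x$ by $\log(1/T)/N$. This shift is small compared to $1-T$, so the factor $1+C_0(1-T)$ is not obviously beaten. I would therefore instead combine this with Lemma~\ref{lem:on_mu}: since $\mu<\mu_0$ just below $T$ (because $\mu(T)=0<\mu_0(T)$), the first claim shows that the region where $\mu\ge\mu_0$ is an interval ending at $t^*$. I would then bound $t^*$ using \eqref{eq:int_mu_muinv}, namely $\int\mu=\int\mu_0$, together with the upper bound from Lemma~\ref{lem:super-level} on $(t_0,T)$, choosing $T^*=\max\{T_0,t_0\}$ adjusted to lie in $(1/2,1)$.
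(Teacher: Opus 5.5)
Your non-strict argument for the first claim is fine, but the strictness step offers an invalid shortcut and the framing has a reversed inequality. The second claim, the $N$-independent bound $t^*\le T^*$, has a genuine gap.

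\emph{First claim.} The infimum argument with $a$ correctly yields $\mu\ge\mu_0$ on $(t_0^{\tilde\omega},t^*)$. However, the reduction you announce, ``i.e.\ $t^{\tilde\omega}\le t_0^{\tilde\omega}$'', is backwards. Letting $t\downarrow t^{\tilde\omega}$ in $F\ge 0$ gives $\tilde\omega=\mu(t^{\tilde\omega})\ge\mu_0(t^{\tilde\omega})$, i.e.\ $t^{\tilde\omega}\ge t_0^{\tilde\omega}$; this is what the paper proves first.

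Consequently your strictness argument, which uses the monotonicity of $F$, is not available on $(t_0^{\tilde\omega},t^{\tilde\omega}]$. There strictness is immediate from $\mu(t)\ge\tilde\omega=\mu_0(t_0^{\tilde\omega})>\mu_0(t)$.

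On $(t^{\tilde\omega},t^*)$ your ``cheaper'' option fails. The configuration $F>0$ on $(t^{\tilde\omega},t_1)$, $F\equiv 0$ on $[t_1,t^*]$, $F<0$ on $(t^*,T)$ is compatible both with the monotonicity of $F$ and with $t^*$ being a supremum. Only the rigidity route works, and it is the paper's:
\begin{itemize}
\item equality in \eqref{eq:nu'} on an interval forces $A_t=\{v>t\}$ to be balls with $|\nabla v|$ constant on $\partial A_t$;
\item differentiating $v(z_t+r_t\theta)=t$ in $t$ shows the centres do not move;
\item then $\log|Q(1,\cdot)|$ is harmonic in $A_t$ (where $Q(1,\cdot)\neq0$) and constant on $\partial A_t$, hence constant, so $|Q(\zeta)|=|\zeta_1|^N$ and $T=1$.
\end{itemize}
``By analyticity'' skips the concentricity step, which is genuinely needed.

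\emph{Second claim.} Your heuristic is what derails you. For $t$ in a fixed range, $x=\log(1/t)/N$ is itself of order $1/N$. So the relative shift $\log(1/T)/(Nx)=\log(1/T)/\log(1/t)$ is of order $1-T$, not $(1-T)/N$. Indeed, for large $N$ and $T$ near $1$,
\[
\frac{\mu_0(t)}{\mu_0(t/T)}-1\approx\frac{d\,(1-T)}{\log(1/t)},
\]
which is comparable to $C_0(1-T)$ and beats it, uniformly in $N$, once $\log(1/t)<d/C_0$.

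So the comparison you abandoned is exactly the one that works. The paper implements it by evaluating \eqref{newestmu} at the crossing point $t=t^*$, where $\mu(t^*)=\mu_0(t^*)$. With $A=(1+C_0(1-T))^{1/d}$ this gives
\[
1-(t^*)^{1/N}\le A\bigl(1-(t^*/T)^{1/N}\bigr),\qquad\text{i.e.}\qquad t^*\le\Bigl(\frac{A-1}{A\,T^{-1/N}-1}\Bigr)^N .
\]
The right-hand side is increasing in $T$, so it is bounded by its limit $\bigl(\frac{CN}{1+CN}\bigr)^N$ with $C=C_0/d$. That quantity is decreasing in $N$, hence at most $\frac{C}{1+C}<1$. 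Together with the trivial cases $T<T_0$ and $t^*\le t_0$, this gives $T^*=\max\{T_0,\frac{C}{1+C}\}$. (The paper takes $t_0=\frac{2}{13}(4+\sqrt3)$ so that $T_0=t_0$.)

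The substitute you sketch cannot close the argument. The identity $\int_0^T\mu=\int_0^1\mu_0$ is a single global constraint and says nothing about where the last crossing occurs. Nothing in your outline gives $t^*\le\max\{T_0,t_0\}$ when $T\ge T_0$; any bound on $t^*$ must come from the pointwise inequality above.
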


\begin{proof}
    Let $t^*> t_0^{\tilde\omega}$.
    Arguing by contradiction, we infer that  $t^{\tilde\omega}> t_0^{\tilde\omega}$. Indeed, if $t^{\tilde\omega}< t_0^{\tilde\omega}$, we would have that $\mu(t_0^{\tilde\omega})<\mu_0(t_0^{\tilde\omega})$ while $\mu(t^*)=\mu_0(t^*)$, contradicting the second part of Lemma~\ref{lem:on_mu}. Hence, $\mu(t)>\mu_0(t)$ for $t\in (t_0^{\tilde\omega}, t^{\tilde\omega})$.
    
    Let us focus now on the interval $(t^{\tilde\omega}, t^*)$. By Lemma~\ref{lem:on_mu}, we conclude that  $\{t\in(t_0^{\tilde\omega},T): \mu(t)=\mu_0(t)\}$ is a connected interval, which is not empty since $t^*\in(t_0^{\tilde \omega}, T)$.
    We finally prove that its interior is empty, arguing  by contradiction. 
    
    Let us assume that $\mu(t)=\mu_0(t)$ for $t\in(t_1, t_2)\subset(t^{\tilde\omega}, T)$ and $T\in(1/2,1)$ and let $\nu(t)=\mu(e^{2t})$, which satisfies \eqref{eq:nu'}. 
    By hypothesis, $\nu(t)=(1-e^{2t/N})^d$ for $t\in I=\big(\frac{1}{2} \log(t_1), \frac{1}{2} \log(t_2)\big)$, so the equality is attained in  \eqref{eq:nu'} for $t\in I$. 
    Let $A_t=\{z\in\mathbb C^d: v(z)>t\}$. From the proof of \cite{KNOCT}*{Theorem~1.1}, we conclude that the equality is achieved provided $A_t$ is a disc, $|\nabla v|$ is constant on $\partial A_t$ and $\Delta_M v(z) = -2\pi Nd (d!)^{-1/d}$ for $z\in A_t$ and  $t\in I$.

    We firstly see that  $A_t$ are concentric balls for $t\in I$. Assume $A_t=\mathbb B^d_{r_t}(z_t)$.
    Since $v(z_t+\theta r_t)=t$ and $\nabla v(z_t+\theta r_t) = c_t \theta$ for all $\theta\in\partial\mathbb B^{d}$, we have
    \[1=\nabla v(z_t+\theta r_t) \cdot \partial_t(z_t+\theta  r_t)= c_t(\partial_t z_t \cdot \theta +\partial_t r_t).\]
    Since this must hold for any $\theta$,  we conclude that $\partial_t z_t=0$, i.e. $z_t$ does not depend on $t$. 
    We may assume without loss of generality that $z_t = 0$.

    Now, we note that   $\log\left|Q(1,z)\right|$ is harmonic and radial.  Since it is constant  on $\partial A_t$, it is constant in $A_t$. This means that $Q(1,z)$ is constant, or $|Q(\zeta)|=|\zeta_1|^N$, and hence $T=1$, against our hypothesis.

    In order to see the existence of a universal upper bound for $t^*$, let us apply Lemma~\ref{lem:super-level} with  $t_0=\frac{2}{13}(4+\sqrt{3})$, so $T_0= t_0$.  
    Therefore, in the case $t^*\geq t_0$ and $T\geq T_0=t_0$, we can apply  \eqref{newestmu} at $t=t^*$ with $C_0=C_0(t_0)$. Since $\mu(t^*)=\mu_0(t^*)$ we have
    \begin{align*}
        1-(t^*)^{1/N}\leq \big(1+C_0(1-T)\big)^{1/d}\left(1-\left(\frac{t^*}{T}\right)^{1/N}\right).
    \end{align*}
    This implies
    \begin{align*}
        t^*\leq \left(\frac{\big(1+C_0(1-T)\big)^{1/d}-1}{T^{-1/N}\big(1+C_0 (1-T)\big)^{1/d}-1}\right)^N.
    \end{align*}
    We can check that the right-hand side is an increasing function of $T$ in $(0,1)$, so
     \begin{align*}
        t^*\leq \left(\frac{CN}{1+C N}\right)^N.
    \end{align*}
    If $t^*<t_0=\frac{2}{13}(4+\sqrt{3})$ or $T<T_0$, then it also holds that 
    \begin{align*}
         t^*\leq T^*_{N}:=\max\left\{\frac{2}{13}(4+\sqrt{3}), \left(1-\frac{1}{1+CN}\right)^N \right\}<1.
    \end{align*}
    
    Finally, we notice that the term depending on $N$ at the right-hand side decreases with $N$, so
       \begin{align*}
        t^*\leq T^*:=\max\left\{\frac{2}{13}(4+\sqrt{3}), 1-\frac{1}{1+C}\right\}<1.
    \end{align*}

\end{proof}

\subsection{Concentration on superlevel sets}
We have already seen that  the Lieb-Solovej entropy inequality (Theorem~\ref{thm:LS16}) implies the  concentration inequality in Corollary~\ref{cor:LS16}. Rewriting it in terms of the measure of the superlevel sets of $U(\zeta)$ and $U_0(\zeta)=|\zeta_1|^{2N}$, one has:
\begin{prop}\label{prop:ineqLS}
    Let $Q\in\mathbb P_N^d$ with $\|Q\|_{\mathbb P_N^d}=1$.  Then for any $\hat s\in (0,1)$ and $\hat t\in (0, T)$,
    \begin{align}
    \label{eq:LS16conc} 
    \int_{0}^{\hat s}\big(\mu_0^{-1}(s)-\mu^{-1}(s)\big)ds&\geq 0,\\
    \nonumber
    \int_{0}^{\hat t}\big(\mu(t)-\mu_0(t)\big)dt&\geq 0.
    \end{align} 
\end{prop}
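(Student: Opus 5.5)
The plan is to derive both inequalities from Theorem~\ref{thm:LS16}, applied to the piecewise linear convex functions $\Phi_{t_0}$ introduced in the introduction, combined with the layer-cake representation of the concentration.

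For the first inequality \eqref{eq:LS16conc}, fix $\hat s\in(0,1)$.
\begin{itemize}
\item Let $\Omega_{\hat t}$ be the superlevel set of $U$ with $\sigma(\Omega_{\hat t})=\hat s$, i.e.\ $\hat t=\mu^{-1}(\hat s)$. This is well defined because $\mu$ is continuous and strictly decreasing on $[0,T]$.
\item Since $\mu^{-1}$ is the decreasing rearrangement of $U$ with respect to $\sigma$,
\[
\binom{N+d}{d}\int_{\Omega_{\hat t}} U\,d\sigma=\binom{N+d}{d}\int_0^{\hat s}\mu^{-1}(s)\,ds .
\]
\item The same identity holds for $U_0=|\zeta_1|^{2N}$ on the cap $\Omega^*$ of measure $\hat s$, with $\mu_0^{-1}$ in place of $\mu^{-1}$.
\item Corollary~\ref{cor:LS16} with $\Omega=\Omega_{\hat t}$ gives $C_{N,\Omega_{\hat t}}(Q)\le C_{N,\Omega^*}(\zeta_1^N)$. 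With $\|Q\|=1$ this reads
\[
\int_0^{\hat s}\mu^{-1}(s)\,ds\le\int_0^{\hat s}\mu_0^{-1}(s)\,ds,
\]
which is \eqref{eq:LS16conc}.
\end{itemize}
Corollary~\ref{cor:LS16} was itself derived from Theorem~\ref{thm:LS16} with $\Phi_{t_0}$. To keep the argument self-contained, one can also apply Theorem~\ref{thm:LS16} with $\Phi_{t_0}$ directly, as sketched below for the second inequality.

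For the second inequality, fix $\hat t\in(0,T)$ and apply Theorem~\ref{thm:LS16} to $\Phi_{\hat t}(x)=(x-\hat t)_+$, which is convex on $[0,1]$. By the layer-cake formula,
\[
\int_{\partial\mathbb B}(U-\hat t)_+\,d\sigma=\int_{\hat t}^{T}\mu(t)\,dt ,
\]
and similarly
\[
\int_{\partial\mathbb B}(U_0-\hat t)_+\,d\sigma=\int_{\hat t}^{1}\mu_0(t)\,dt .
\]
Theorem~\ref{thm:LS16} says $-\int\Phi_{\hat t}(U)\,d\sigma\ge -\int\Phi_{\hat t}(U_0)\,d\sigma$, so
\[
\int_{\hat t}^1\mu(t)\,dt\le\int_{\hat t}^1\mu_0(t)\,dt,
\]
using that $\mu=0$ on $[T,1]$. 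By \eqref{eq:int_mu_muinv}, the identity $\int_0^1\mu_0\,dt=\binom{N+d}{d}^{-1}$, and normalization, both $\mu$ and $\mu_0$ have total integral $\binom{N+d}{d}^{-1}$ over $[0,1]$. Subtracting the previous inequality from this equality gives
\[
\int_0^{\hat t}\big(\mu(t)-\mu_0(t)\big)\,dt\ge 0 .
\]

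The first inequality can also be read off from the second by a duality argument. Both integrals measure areas between the graphs of the decreasing functions $\mu$ and $\mu_0$, and $\int_0^{\hat s}\mu^{-1}$ is obtained from $\int_{\mu^{-1}(\hat s)}\mu$ by adding the rectangle $\hat s\,\mu^{-1}(\hat s)$. Concretely,
\[
\int_0^{\hat s}\mu^{-1}(s)\,ds=\hat s\,\mu^{-1}(\hat s)+\int_{\mu^{-1}(\hat s)}^{T}\mu(t)\,dt .
\]
The key fact is that, for every $\tau$,
\[
\hat s\,\tau+\int_\tau^1\mu(t)\,dt\ \ge\ \hat s\,\mu^{-1}(\hat s)+\int_{\mu^{-1}(\hat s)}^1\mu(t)\,dt,
\]
with equality at $\tau=\mu^{-1}(\hat s)$. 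So the left-hand side of the displayed identity is a minimum over $\tau$. Taking $\tau=\mu_0^{-1}(\hat s)$ in the corresponding expression for $\mu_0$ and using $\int_\tau^1\mu\le\int_\tau^1\mu_0$ then yields the first inequality.

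The only delicate point is this Legendre-type identity. It requires that $\mu$ have no flat pieces, which follows from the strict monotonicity guaranteed by \cite{Rudin}*{Theorem 5.5.9} as recalled above. Everything else is bookkeeping with the layer-cake formula.
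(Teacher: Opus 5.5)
Your proof is correct and follows the paper's route: the first inequality is Corollary~\ref{cor:LS16} applied to the superlevel set $\Omega_{\hat t}$ and rewritten through $\mu^{-1}$, exactly as the paper does. For the second inequality you argue directly from Theorem~\ref{thm:LS16} with $(x-\hat t)_+$, the layer-cake formula and the common total mass $\binom{N+d}{d}^{-1}$, instead of the paper's unspecified ``change of variables''; your minimization of $\tau\mapsto\hat s\tau+\int_\tau^1\mu$ makes explicit the equivalence the paper attributes to Hardy--Littlewood--P\'olya, and that step is less delicate than you suggest, since this function is convex with derivative $\hat s-\mu(\tau)$ and is minimized wherever $\mu$ crosses $\hat s$, with no need to rule out flat pieces.
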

Notice that the second inequality can be obtained from \eqref{eq:LS16conc} after suitable changes of variables. Indeed, both inequalities are equivalent to the Lieb-Solovej entropy inequality  by the
Hardy-Littlewood-Polya dominating principle (see e.g. \cite{Hardy}).

In this section we seek to obtain a quantitative version of \eqref{eq:LS16conc}. More precisely, we aim to show an estimate of the form 
\begin{align}\label{eq:1-Tleq2}
    1-T\leq F_{N}(\hat s) \frac{\int_0^{\hat s}\big(\mu^{-1}_0(s)-\mu^{-1}(s)\big)ds}{\int_0^{\hat s}\mu^{-1}_0(s)ds},
\end{align}
where  $\hat s$ may be required to be small enough.
Notice that in terms  of $U$ and $U_0$, \eqref{eq:1-Tleq2} can be written as
\begin{align}\label{eq:1-Tleq}
    1-T\leq F_{N}\big(\mu(\hat t)\big) \left(1-\frac{\int_{\{U(\zeta)>\hat t\}}U(\zeta)d\sigma(\zeta)}{\int_{\{U_0(\zeta)>\hat t_0\}}U_0(\zeta)d\sigma(\zeta)}\right),
\end{align}
where  $\mu(\hat t)=\mu_0(\hat t_0)$.

An estimate like \eqref{eq:1-Tleq} implies the characterization of the maximizers of the concentration. Indeed \eqref{eq:supQ} implies that  $T=1$ if and only if $Q$ is a reproducing kernel (notice that the set of reproducing kernels $\{K(\cdot,\eta)\}_{\eta\in\partial\mathbb B}$ is invariant under the multiplication by unimodular constants).

Lemma~\ref{lem:super-level} implies the following  first estimate on the integral of $\mu_0(t)-\mu(t)$:

\begin{lemma}\label{lem:estimateintegral}
    For every $t_0\in(0,1)$, there is $T_0\in[t_0,1)$ and $C>0$ such that 
    if $Q\in\mathbb P^d_N$ satisfies $\|Q\|_{\mathbb P^d_N}=1$ with $T\geq T_0$, then for any $t\in(t_0, T)$ such that $t\geq t^*$ it holds
    \begin{align*}
        \int_{t}^T\big(\mu_0(\tau)-\mu(\tau)\big)d\tau\geq \frac{C}{N^{d}}(1-T)\left(1-\frac{t}{T}\right)^{d}.
    \end{align*}
\end{lemma}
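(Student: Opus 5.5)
The plan is to apply Lemma~\ref{lem:super-level} directly, with the given $t_0$, and let it furnish $T_0$ and $C_0$. For $\tau\in(t,T)$, which lies inside $(t_0,T)$, we have $\mu(\tau)\le (1+C_0(1-T))\mu_0(\tau/T)$. Therefore
\begin{align*}
\int_t^T\big(\mu_0(\tau)-\mu(\tau)\big)d\tau \ge \int_t^T\Big(\mu_0(\tau)-\mu_0\big(\tfrac{\tau}{T}\big)\Big)d\tau - C_0(1-T)\int_t^T\mu_0\big(\tfrac{\tau}{T}\big)d\tau .
\end{align*}
The first integral is negative, because $\mu_0$ is decreasing and $\tau/T>\tau$. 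So this crude bound is useless as it stands. The right move is to use the information $t\ge t^*$, i.e.\ $\mu(\tau)\le\mu_0(\tau)$ on $(t,T)$, only as a sign condition, and to get the gain elsewhere. Concretely, I will split the integral at an intermediate point $t_1 = T - c(1-T)$, or simply compare near $T$. On $(T,1)$ we have $\mu=0$ while $\mu_0>0$. That suggests writing
\begin{align*}
\int_t^T(\mu_0-\mu)\,d\tau = \int_t^1\mu_0\,d\tau-\int_T^1\mu_0\,d\tau-\int_t^T\mu\,d\tau ,
\end{align*}
and bounding $\int_t^T\mu\le (1+C_0(1-T))\,T\int_{t/T}^1\mu_0(s)\,ds$ by the substitution $\tau=Ts$.

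The main computation is the function $G(x)=\int_x^1\mu_0(s)\,ds=\int_x^1(1-s^{1/N})^d\,ds$. After the substitution, the required lower bound becomes
\begin{align*}
G(t)-G(T)-T\,G(t/T) \ge C_0(1-T)\,T\,G(t/T)+\frac{C}{N^d}(1-T)\Big(1-\frac tT\Big)^d .
\end{align*}
Near $s=1$ we have $\mu_0(s)\approx ((1-s)/N)^d$, which gives $G(x)\approx (1-x)^{d+1}/((d+1)N^d)$ for $1-x\lesssim 1$. So all terms carry the factor $N^{-d}$. Write $a=1-T$ and $b=1-t/T$, so that $1-t\approx a+b$. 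The leading part of the left side is then roughly $\frac{1}{(d+1)N^d}\big((a+b)^{d+1}-a^{d+1}-b^{d+1}\big)\gtrsim N^{-d}\,a\,b^d$. This already has the desired form $a(1-t/T)^d/N^d$.

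The error term $C_0 a\,G(t/T)\sim C_0 a b^{d+1}/N^d$ is smaller than the main term by a factor $b$. So the estimate closes when $b$ is small, i.e.\ when $t$ is close to $T$. The main obstacle is the regime where $b$ is not small, together with making the asymptotics uniform in $N$ (since $(1-s^{1/N})$ is not exactly $(1-s)/N$). There, I would use the hypothesis $t\ge t^*$ differently. Since $\mu\le\mu_0$ on $(t,T)$, the integral $\int_t^T(\mu_0-\mu)$ is increasing as $t$ decreases. Hence it is bounded below by its value at $t'=\max\{t,T(1-\beta)\}$ for a small fixed $\beta$ depending on $C_0$, and $(1-t'/T)^d$ is comparable to $(1-t/T)^d$ up to a constant depending on $\beta$. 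For the uniformity in $N$, I would use the exact elementary bounds $(1-s)/N\le 1-s^{1/N}\le (1-s)/(Ns)$ for $s\ge t_0$. These lose only factors depending on $t_0$, which are absorbed into $C$.
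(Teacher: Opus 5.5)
There is a genuine gap. It sits exactly at the step you flag as the main obstacle, and a sign error earlier sends you on a detour. Since $\mu_0$ is decreasing and $\tau/T>\tau$, we have $\mu_0(\tau/T)<\mu_0(\tau)$, so $\int_t^T\bigl(\mu_0(\tau)-\mu_0(\tau/T)\bigr)d\tau$ is \emph{positive}. In fact it equals $G(t)-G(T)-T\,G(t/T)$, so your reformulation concerns the same quantity. The crude bound from Lemma~\ref{lem:super-level} is precisely the paper's starting point. Everything then hinges on showing that $\mu_0(\tau)-\mu_0(\tau/T)$ beats $C_0(1-T)\mu_0(\tau/T)$ by the right amount, uniformly in $N$ and for $T$ arbitrarily close to $1$.

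That is where your plan breaks. The expansion $G(x)\approx(1-x)^{d+1}/((d+1)N^d)$ is only heuristic. The proposed repair, inserting $(1-s)/N\le 1-s^{1/N}\le(1-s)/(Ns)$ into the pieces of $G(t)-G(T)-TG(t/T)$ separately, cannot work. This difference vanishes identically at $T=1$, but at $T=1$ separate non-exact bounds leave the strictly negative number $\int_t^1\bigl[((1-s)/N)^d-((1-s)/(Ns))^d\bigr]ds$. So for fixed $t$ and $T\to 1$, your lower bound tends to a negative constant while the target $CN^{-d}(1-T)(1-t/T)^d$ tends to $0$. Multiplicative losses depending on $t_0$ cannot be ``absorbed into $C$'' in a difference with leading-order cancellation.

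The missing idea is to extract the factor $1-T$ exactly and pointwise, through $\mu_0'$:
\begin{align*}
\mu_0(\tau)-\mu_0\Big(\frac{\tau}{T}\Big)=-\int_T^1\mu_0'\Big(\frac{\tau}{\sigma}\Big)\frac{\tau}{\sigma^2}\,d\sigma\ge d\,\bigl(1-T^{1/N}\bigr)\Big(\frac{\tau}{T}\Big)^{1/N}\mu_0\Big(\frac{\tau}{T}\Big)^{1-1/d},
\end{align*}
while $C_0(1-T)\mu_0(\tau/T)=C_0(1-T)\bigl(1-(\tau/T)^{1/N}\bigr)\mu_0(\tau/T)^{1-1/d}$. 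Using $1-T^{1/N}\ge(1-T)/N$, $s^{1/N}\ge s$ and $1-s^{1/N}\le-\log s/N$, the first term dominates for $\tau/T\ge\tilde C_0$. Here $\tilde C_0<1$ depends only on $d$ and $C_0$, not on $N$. This yields, in that range,
\[
\mu_0(\tau)-\mu(\tau)\ge\frac{C}{N}(1-T)\,\mu_0(\tau/T)^{1-1/d}\ge\frac{C}{N^d}(1-T)(1-\tau/T)^{d-1}.
\]
Your monotonicity reduction to $t'=\max\{t,\tilde C_0T\}$, which is valid because $t\ge t^*$, then integrates this to the claim. This is exactly the paper's two-case argument.
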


\begin{proof}
    We apply Lemma~\ref{lem:super-level} with $t_0$. Then if $t\in(t_0, T)$ with $T\geq T_0$ we have
    \begin{align*}
      \mu_0(t)-\mu(t)\geq \mu_0(t)-\big(1+C_0(1-T)\big)\mu_0\left(\frac t T\right).
    \end{align*}
    Using the monotonicity of $\mu_0$, we start estimating  $\mu_0(t)-\mu_0\left(\frac{t}{T}\right)$ as follows:
    \begin{align*}
        \mu_0(t)-\mu_0\left(\frac{t}{T}\right)
        &=-\int_T^1\mu_0'\left(\frac{t}{\tau}\right)\frac{t}{\tau^2}d\tau
        =\frac{d }{N}t^{1/N}\int_T^1\left(\mu_0\left(\frac{t}{\tau}\right)\right)^{1-1/d}\tau^{-1-1/N}d\tau\\
        &\geq \frac{d }{N}t^{1/N}\left(\mu_0\left(\frac{t}{T}\right)\right)^{1-1/d}\int_T^1\tau^{-1-1/N}d\tau \\
         &=d (1-T^{1/N}) \left(\frac{t}{T}\right)^{1/N}\left(\mu_0\left(\frac{t}{T}\right)\right)^{1-1/d}.
    \end{align*}
    Taking into account that $1-T^{1/N}\geq\frac{1-T}{N}$, we have 
    \begin{align*}
        \mu_0(t)-\mu(t)
        &\geq (1-T)\left(\left(\frac{d}{N}+C_0\right)\left(\frac{t}{T}\right)^{1/N}-C_0\right)\left( \mu_0\left(\frac{t}{T}\right)\right)^{1-1/d},
     \end{align*}
     If $\frac t T\geq \left(\frac{C_0}{C_0+\frac d N}\right)^N$, then the right hand side is non-negative. In particular, this is satisfied for all values of $N$ if $\frac t T\geq \frac{C_0}{C_0+d}$. 
     Under this assumption and using that $x^{1/N}\geq 1+\frac{\log x}{N}$ for $x\in(0,1)$ we infer
     \begin{align*}
         \left(\frac{d}{N}+C_0\right)\left(\frac{t}{T}\right)^{1/N}-C_0
         \geq \frac{1}{N}\left(\frac{d}{C_0}-\log \left(1+\frac{d}{C_0}\right)\right)-\frac{d}{N^2}\log \left(1+\frac{d}{C_0}\right).
     \end{align*}
     Since $x>\log(1+x)$ for all $x>0$,  the first term in the right hand side is always positive
     and can absorb the second one for $N\geq N_0$ with $N_0$ sufficiently large. 
     Therefore, for suitable $C$ (depending on $d$) 
     \begin{align}\label{eq:diffpositive}
        \mu_0(t)-\mu(t)
        &\geq \frac{C}N(1-T)\left( \mu_0\left(\frac{t}{T}\right)\right)^{1-1/d}
     \end{align}
     provided $t\geq t_0$, $T\geq T_0$ and $\frac{t}{T}\geq \frac{C_0}{C_0+d}$.
     If $N<N_0$, considering $\frac{t}{T}$ big enough, the same result holds. We denote by $\tilde C_0<1$ the maximum of all the lower bounds for $\frac{t}{T}$.

     Let $t\in(t^*, T)$, so $\mu_0(\tau)-\mu(\tau)>0$ for $\tau\in(t, T)$. 
     Assume $t>t_0$  and $T>T_0$, with $T_0$ given by  Lemma~\ref{lem:super-level} for $t_0$.
     If $t\leq\tilde C_0 T$, using \eqref{eq:diffpositive}  for $\tau\geq \tilde C_0 T$ we obtain
     \begin{align*}
     I&=\int_t^T\big(\mu_0(t)-\mu(t)\big)d\tau\geq \int_{\tilde C_0 T}^T\big(\mu_0(\tau)-\mu(\tau)\big)d\tau
     \\&\geq \frac{C}{N} (1-T)T\int_{\tilde C_0}^1\big(\mu_0(\tau)\big)^{1-1/d}d\tau \geq \frac{C}{N^{d}}(1-T) .
     \end{align*}
     If $t>\tilde C_0 T$ then we can use  \eqref{eq:diffpositive} for $\tau\in(t, T)$ so
     \begin{align*}
     I 
     &\geq \frac{C}{N}(1-T)\int_{t}^T\left(\mu_0\left(\frac{\tau}{T}\right)\right)^{1-1/d}d\tau 
     \geq \frac{C}{N^{d}}(1-T)\int_t^T\left(1-\frac{\tau}{T}\right)^{d-1}d\tau
     \\&\geq\frac{C}{N^{d}}(1-T)\left(1-\frac{t}{T}\right)^{d-1}.
     \end{align*}
     Combining both cases, the result holds. 
     
\end{proof}

From now on, we focus on obtaining an estimate like \eqref{eq:1-Tleq2} with suitable $F_N$  for $\hat s<\tilde\omega$. We study separately, but with similar tools, two cases depending on the position of $s^*$ (in \eqref{eq:t*}) with respect to $\tilde\omega$.

\subsubsection{The case  $s^*<\tilde\omega$}

\begin{prop}\label{prop:case1}
     Let $Q\in\mathbb P^d_N$ with $\|Q\|_{\mathbb P^d_N}=1$. 
     Assume that $s^*\leq \tilde\omega$. Then for any {$\hat s\in(0,\tilde\omega)$} the estimate  in \eqref{eq:1-Tleq2} holds with
    \begin{align*}
      F_{N}(x)= \frac{C N^d}{\binom{N+d}{d}^2\int_x^{\tilde\omega}\mu_0^{-1}(s)ds}.
    \end{align*}
\end{prop}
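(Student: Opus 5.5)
The plan is to follow the scheme of \cite{GFOC25} and work with the function
\[
G(s)=\int_0^{s}\big(\mu_0^{-1}(\sigma)-\mu^{-1}(\sigma)\big)\,d\sigma ,\qquad s\in[0,1].
\]
By Proposition~\ref{prop:ineqLS}, $G\ge 0$, and by \eqref{eq:int_mu_muinv}, $G(0)=G(1)=0$. Since $\int_0^{\hat s}\mu_0^{-1}\,ds\le \int_0^1\mu_0^{-1}=\binom{N+d}{d}^{-1}$, the claim \eqref{eq:1-Tleq2} with the stated $F_N$ reduces to
\begin{equation*}
G(\hat s)\ \ge\ \frac{c}{N^d}\,(1-T)\,\binom{N+d}{d}\int_{\hat s}^{\tilde\omega}\mu_0^{-1}(s)\,ds ,\qquad \hat s\in(0,\tilde\omega).
\end{equation*}
I prove this in two steps: first a lower bound at the crossing point $s^*$, then its propagation to all $\hat s<\tilde\omega$.

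\emph{Step 1: lower bound for $G(s^*)$.} On $(0,s^*)$ we have $\mu^{-1}<\mu_0^{-1}$. Since $\mu^{-1}(s^*)=\mu_0^{-1}(s^*)=t^*$, a change of variables gives
\[
G(s^*)=\int_{t^*}^{1}\big(\mu_0(\tau)-\mu(\tau)\big)\,d\tau=\int_{t^*}^{T}\big(\mu_0(\tau)-\mu(\tau)\big)\,d\tau+\int_T^1\mu_0 .
\]
The case split is as follows.
\begin{itemize}
\item If $T\ge T_0$, where $T_0$ is attached to a fixed $t_0$ below $T^*$ via Lemma~\ref{lem:super-level}, I apply Lemma~\ref{lem:estimateintegral} at $t=\max\{t^*,t_0\}$. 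Lemma~\ref{lem:uniquet*} gives $t^*\le T^*<1$ uniformly in $N$, so $(1-t/T)^d$ is bounded below by a constant. This yields $G(s^*)\ge cN^{-d}(1-T)$.
\item If $T<T_0$, then $1-T$ is bounded below, so it suffices to show $G(s^*)\ge cN^{-d}$. Here I would use $\int_T^1\mu_0\ge\int_{T_0}^1\mu_0\gtrsim N^{-d}$.
\end{itemize}

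\emph{Step 2: propagation.} For $\hat s\le s^*$, $G$ is increasing on $(0,s^*)$. Moreover $\mu_0^{-1}-\mu^{-1}$ is positive there and, by the monotonicity in Lemma~\ref{lem:on_mu} rewritten in the $s$-variable, it is comparable to $\mu_0^{-1}$ on $(0,s^*)$. This gives $G(\hat s)\gtrsim G(s^*)\int_0^{\hat s}\mu_0^{-1}\big/\int_0^{s^*}\mu_0^{-1}$, which is stronger than needed.

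For $\hat s\in(s^*,\tilde\omega)$, the hypothesis $s^*\le\tilde\omega$ lets me use Lemma~\ref{lem:on_mu}, since $t>t^{\tilde\omega}$ corresponds to $s<\tilde\omega$. Together with Lemma~\ref{lem:uniquet*}, this gives that $\mu^{-1}\ge\mu_0^{-1}$ on $(s^*,\tilde\omega)$. It also gives that the relative excess $(\mu^{-1}-\mu_0^{-1})/\mu_0^{-1}$ is non-decreasing there, which is the $s$-version of the non-increasing quotients in Lemma~\ref{lem:on_mu}. Hence $G$ decreases on $(s^*,\tilde\omega)$ while staying nonnegative, so
\[
G(\hat s)\ \ge\ G(\hat s)-G(\tilde\omega)=\int_{\hat s}^{\tilde\omega}\big(\mu^{-1}-\mu_0^{-1}\big).
\]
A Chebyshev-type rearrangement, using the monotone weight $\mu_0^{-1}$ and the fact that the mass of $\mu^{-1}-\mu_0^{-1}$ is pushed toward $\tilde\omega$, should give
\[
G(\hat s)\ \ge\ G(s^*)\,\frac{\int_{\hat s}^{\tilde\omega}\mu_0^{-1}}{\int_{s^*}^{\tilde\omega}\mu_0^{-1}}.
\]
Combined with Step~1 and $\int_{s^*}^{\tilde\omega}\mu_0^{-1}\le\binom{N+d}{d}^{-1}$, this yields the displayed inequality above and hence \eqref{eq:1-Tleq2}.

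The main obstacle is Step~2 for $\hat s>s^*$. One has to translate the monotonicity of Lemma~\ref{lem:on_mu}, which is stated in the $t$-variable with $\mu^{1/d}$, into a usable monotone ratio for $\mu^{-1}-\mu_0^{-1}$ against $\mu_0^{-1}$ on $(s^*,\tilde\omega)$, with $N$-independent constants. If the direct ratio fails, I would instead run the interpolation at the level of the second quantity in Lemma~\ref{lem:on_mu}, $\int_0^t(\mu^{1/d}-\mu_0^{1/d})\,d\tau/t^{1+1/N}$, and convert back.
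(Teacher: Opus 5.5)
Your reduction at the outset drops the wrong factor on part of the range. Written out, \eqref{eq:1-Tleq2} with the given $F_N$ asks for
\[
G(\hat s)\ \ge\ \frac{1}{C N^d}\,(1-T)\Big[\binom{N+d}{d}\int_0^{\hat s}\mu_0^{-1}\Big]\Big[\binom{N+d}{d}\int_{\hat s}^{\tilde\omega}\mu_0^{-1}\Big].
\]
This is a product of two brackets, each at most $1$, and you drop the first one for every $\hat s$. The resulting display is false for small $\hat s$ whenever $T<1$. Its left side is at most $\int_0^{\hat s}\mu_0^{-1}\le\hat s\to 0$, whereas its right side tends to $\frac{c}{N^d}(1-T)\binom{N+d}{d}\int_0^{\tilde\omega}\mu_0^{-1}>0$. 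Hence, for $\hat s\le s^*$, your bound $G(\hat s)\ge G(s^*)\int_0^{\hat s}\mu_0^{-1}/\int_0^{s^*}\mu_0^{-1}$ is not ``stronger than needed''; it cannot yield your display.

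It does yield \eqref{eq:1-Tleq2} once you drop the \emph{second} bracket instead. Since $\int_0^{s^*}\mu_0^{-1}\le\binom{N+d}{d}^{-1}$, that bound gives $G(\hat s)\ge cN^{-d}(1-T)\binom{N+d}{d}\int_0^{\hat s}\mu_0^{-1}$, and then you use $\binom{N+d}{d}\int_{\hat s}^{\tilde\omega}\mu_0^{-1}\le 1$. This is Case~1 in the proof of Lemma~\ref{lem:upper1}. For $\hat s>s^*$, dropping the first bracket is legitimate, and your argument coincides with Case~2 there.

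Relatedly, for $\hat s\le s^*$ the integrand $\mu_0^{-1}-\mu^{-1}=(1-\rho)\mu_0^{-1}$, with $\rho=\mu^{-1}/\mu_0^{-1}$, is not ``comparable'' to $\mu_0^{-1}$. The factor $1-\rho$ decays from $1-T$ to $0$ on $(0,s^*)$. What gives your bound, with constant $1$, is that $1-\rho$ is non-increasing, so its $\mu_0^{-1}$-weighted average over $(0,\hat s)$ dominates the one over $(0,s^*)$.

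Apart from this, your route is the paper's. Step~1 is Lemma~\ref{lem:lower1} together with the identity $\int_{t^*}^1(\mu_0-\mu)\,dt=\int_0^{s^*}(\mu_0^{-1}-\mu^{-1})\,ds$, and Step~2 is Lemma~\ref{lem:upper1}. The obstacle you anticipate does not arise. Writing $t=\mu^{-1}(s)$, the condition $\rho'(s)\ge 0$ is a direct rearrangement of \eqref{eq:mu'}, equivalently of the monotonicity of the first quotient in Lemma~\ref{lem:on_mu}. It holds for $s\in(0,\tilde\omega)$ with no loss in constants.

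One minor repair is needed in Step~1. Taking $t_0<T^*$ and splitting at $T_0$ does not keep $(1-t/T)^d$ away from $0$. The reason is that $t=\max\{t^*,t_0\}$ can be as large as $T^*$, and nothing forces $T_0>T^*$. Split instead at a fixed level strictly above $\max\{T_0,T^*\}$; the paper takes $t_0=T^*$ and $T_1=(1+T_0)/2$. Then handle the complementary range by \eqref{eq:farfrom1}, exactly as in your second bullet.
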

\begin{proof}
    The result follows from the following Lemmas~\ref{lem:upper1} and~\ref{lem:lower1} together with the next observation: 
    Let $s^*$ and $t^*$ be as in \eqref{eq:t*}, then
    \begin{align*}
         \int_{t^*}^1\left(\mu_0(t)-\mu(t)\right)dt
        &=\int_{0}^{s^*}\left(\mu_0^{-1}(s)-\mu^{-1}(s)\right)ds,
    \end{align*}
    where for the last identity we have applied suitable changes of variables. 
\end{proof}

\begin{lemma}\label{lem:upper1}
    Under the assumptions of Proposition~\ref{prop:case1}, for any {$\hat s\in(0,\tilde\omega)$} it holds
    \begin{align}\label{eq:upper1}
        \int_{0}^{s^*}\left(\mu_0^{-1}(s)-\mu^{-1}(s)\right)ds \leq  \frac{1}{\binom{N+d}{d}^2}\frac{1}{\int_{\hat s}^{\tilde\omega}\mu_0^{-1}(s)ds}
        \left(1-\frac{\int_0^{\hat s}\mu^{-1}(s)ds}{\int_0^{\hat s}\mu^{-1}_0(s)}\right).
    \end{align}
\end{lemma}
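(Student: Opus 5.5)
We would prove the lemma by comparing $\mu^{-1}$ and $\mu_0^{-1}$ multiplicatively. Set
\[
\Delta(s)=\int_0^s\big(\mu_0^{-1}(\sigma)-\mu^{-1}(\sigma)\big)d\sigma ,\qquad q(s)=\frac{\mu^{-1}(s)}{\mu_0^{-1}(s)} .
\]
The left-hand side of \eqref{eq:upper1} is $\Delta(s^*)$. Since $\int_0^{\hat s}\mu^{-1}_0(s)\,ds\le \int_0^1\mu_0^{-1}(s)\,ds=\binom{N+d}{d}^{-1}$ by \eqref{eq:int_mu_muinv}, the right-hand side of \eqref{eq:upper1} equals $\Delta(\hat s)$ multiplied by a factor that is at least
\[
\Big(\tbinom{N+d}{d}\int_{\hat s}^{\tilde\omega}\mu_0^{-1}(s)\,ds\Big)^{-1}.
\]
It therefore suffices to prove
\[
\Delta(s^*)\le \frac{\Delta(\hat s)}{\binom{N+d}{d}\min\Big\{\int_0^{\hat s}\mu_0^{-1}(s)\,ds,\ \int_{\hat s}^{\tilde\omega}\mu_0^{-1}(s)\,ds\Big\}} .
\]
Both alternatives are allowed because each of the two integrals is at most $\binom{N+d}{d}^{-1}$.

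\emph{Step 1: $q$ is non-decreasing on $(0,\tilde\omega)$.} We differentiate the inverse function. For $s\in(0,\tilde\omega)$ we have $t=\mu^{-1}(s)>t^{\tilde\omega}$, so \eqref{eq:mu'} gives
\[
(\mu^{-1})'(s)=\frac{1}{\mu'(t)}\ge -\frac{N\,\mu^{-1}(s)}{d\,s^{1-1/d}(1-s^{1/d})},
\]
that is, $\frac{d}{ds}\log\mu^{-1}(s)\ge -\frac{N}{d\,s^{1-1/d}(1-s^{1/d})}$. A direct computation from \eqref{eq:mu0inv} shows that the right-hand side is exactly $\frac{d}{ds}\log\mu_0^{-1}(s)$, so $\log q$ is non-decreasing. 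We expect this translation of Lemma~\ref{lem:on_mu} to the inverse variables to be the main technical point. One has to check that $\mu$ is differentiable almost everywhere with $\mu'<0$ there; if necessary we would argue instead with the monotone quantity already provided by the second part of Lemma~\ref{lem:on_mu}. By the definition of $s^*$ and this monotonicity, $q<1$ on $(0,s^*)$ and $q\ge1$ on $(s^*,\tilde\omega)$.

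\emph{Step 2: the case $\hat s\le s^*$.} Here $1-q$ is non-negative and non-increasing on $(0,s^*)$. Hence the average of $1-q$ with respect to the weight $\mu_0^{-1}\,ds$ over $(0,\hat s)$ is at least its average over $(0,s^*)$:
\[
\frac{\Delta(\hat s)}{\int_0^{\hat s}\mu_0^{-1}}\ \ge\ \frac{\Delta(s^*)}{\int_0^{s^*}\mu_0^{-1}}\ \ge\ \tbinom{N+d}{d}\,\Delta(s^*),
\]
which is the required inequality with the first alternative of the minimum.

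\emph{Step 3: the case $s^*<\hat s<\tilde\omega$.} On this range $q-1\ge0$ is non-decreasing, and $\Delta(\tilde\omega)\ge0$ by Proposition~\ref{prop:ineqLS}. This yields the two bounds
\[
\Delta(\hat s)\ \ge\ \Delta(\hat s)-\Delta(\tilde\omega)=\int_{\hat s}^{\tilde\omega}(q-1)\mu_0^{-1}\ \ge\ (q(\hat s)-1)\int_{\hat s}^{\tilde\omega}\mu_0^{-1},
\]
\[
\Delta(s^*)-\Delta(\hat s)=\int_{s^*}^{\hat s}(q-1)\mu_0^{-1}\ \le\ (q(\hat s)-1)\int_{s^*}^{\hat s}\mu_0^{-1}.
\]
Combining them gives
\[
\Delta(s^*)\le \Delta(\hat s)\,\frac{\int_{s^*}^{\tilde\omega}\mu_0^{-1}}{\int_{\hat s}^{\tilde\omega}\mu_0^{-1}}\le \frac{\Delta(\hat s)}{\binom{N+d}{d}\int_{\hat s}^{\tilde\omega}\mu_0^{-1}},
\]
where the last step again uses \eqref{eq:int_mu_muinv}. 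This is the required inequality with the second alternative of the minimum.

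Together, Steps 2 and 3 establish \eqref{eq:upper1}.
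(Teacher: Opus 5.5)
Your proof is correct and follows the paper's argument. You establish that $\mu^{-1}/\mu_0^{-1}$ is non-decreasing on $(0,\tilde\omega)$ via Lemma~\ref{lem:on_mu}, then split into the cases $\hat s\le s^*$ and $s^*<\hat s<\tilde\omega$, using Proposition~\ref{prop:ineqLS} (which also supplies the $\Delta(\hat s)\ge 0$ your reduction implicitly needs) and the bound $\int_0^1\mu_0^{-1}=\binom{N+d}{d}^{-1}$ to absorb the extra factor. Your fallback for the regularity issue is also sound: after the substitution $s=\mu(t)$, monotonicity of $q$ is exactly the monotonicity of $\big(\mu(t)^{1/d}-\mu_0(t)^{1/d}\big)/t^{1/N}$ from the second part of Lemma~\ref{lem:on_mu}.
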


\begin{proof}
    First of all, we observe that  the function 
    \begin{align}\label{eq:defrho}
        \rho(s)=\frac{\mu^{-1}(s)}{\mu_0^{-1}(s)},
    \end{align}
    is non-decreasing in $(0,\tilde\omega)$. 
    Indeed, the sign of $\rho'(s)$ is determined by the sign of 
    \begin{align*}
        (\mu^{-1})'(s)-\frac{(\mu_0^{-1})'(s)}{\mu_0^{-1}(s)}\mu^{-1}(s)
        =\frac{1}{\mu'\big(\mu^{-1}(s)\big)}+\frac{N}{d}\frac{s^{1/d-1}}{1-s^{1/d}}\mu^{-1}(s).
    \end{align*}
    Taking $t=\mu^{-1}(s)$, $\rho'(s)\geq0$  if and only if
    \begin{align*}
       \frac{1}{\mu'(t)}+\frac{Nt}{d}\frac{\mu(t)^{1/d-1}}{1-\mu(t)^{1/d}}\geq0,
    \end{align*}
    which holds by Lemma~\ref{lem:on_mu} for $t\in(\mu^{-1}(\tilde\omega),T)$.
    
    For any $0\leq s_1 <s_2\leq 1$, let $I(s_1, s_2)=\int_{s_1}^{s_2}\left(\mu_0^{-1}(s)-\mu^{-1}(s)\right)ds$.
    With this notation, \eqref{eq:upper1} can be written as
    \begin{align*}
        I(0, s^*)\leq \frac{1}{{\binom{N+d}{d}^2}}\frac{1}{\int_{\hat s}^{\tilde\omega}\mu_0^{-1}(s)ds}\frac{I(0, \hat s)}{\int_0^{\hat s}\mu_0^{-1}(s)ds}.
    \end{align*}

    \emph{Case 1: $\hat s<s^*<\tilde\omega$.}
    Taking into account the monotonicity of $\rho$ we obtain
    \begin{align*}
        I(0, \hat s)&\geq \big(1-\rho(\hat s)\big) \int_{0}^{\hat s} \mu_0^{-1}(s)ds,
        \\
        I(\hat s, s^*)&\leq \big(1-\rho(\hat s)\big)\int_{\hat s}^{s^*}\mu_0^{-1}(s)ds.
    \end{align*}
    Combining the previous inequalities, we infer
    \begin{align*}
        I(0, s^*)&=I(0, \hat s)+I(\hat s, s^*)
        \leq \left(1+\frac{\int_{\hat s}^{s^*}\mu_0^{-1}(s)ds}{\int_0^{\hat s}\mu_0^{-1}(s)ds}\right) I(0, \hat s)\\
        &\leq \frac{\int_{0}^{s^*}\mu_0^{-1}(s)ds}{\int_0^{\hat s}\mu_0^{-1}(s)ds} I(0, \hat s)
        \leq \frac{1}{\binom{N+d}{N}}\frac{I(0, \hat s)}{\int_0^{\hat s}\mu_0^{-1}(s)ds} .
    \end{align*}
    Since $\binom{N+d}{N}\int_{\hat s}^{\tilde\omega}\mu_0^{-1}(s)ds\leq 1$,  estimate \eqref{eq:int_mu_muinv} follows.

    \emph{Case 2:  $s^*<\hat s<\tilde\omega$.} 
    Now we have
    \begin{align*}
        I(0,s^*)=I(0,\hat s)-I(s^*, \hat s).
    \end{align*}
    By Proposition~\ref{prop:ineqLS}, 
    \begin{align*}
        I(0,\hat s)=I(0,\tilde\omega)-I(\hat s, \tilde\omega)\geq -I(\hat s, \tilde\omega).
    \end{align*}
    Using again the monotonicity of $\rho$, we have
    \begin{align*}
        -I(s^*,\hat s)&\leq \big(\rho(\hat s)-1\big)\int_{s^*}^{\hat s}\mu_0^{-1}(s)ds,\\
        -I(\hat s,\tilde\omega)&\geq \big(\rho(\hat s)-1\big)\int_{\hat s}^{\tilde\omega}\mu_0^{-1}(s)ds.
    \end{align*}
    Combining the previous inequalities, we conclude
    \begin{align*}
        I(0,s^*)&\le\left(1+\frac{\int_{s^*}^{\hat s}\mu_0^{-1}(s)ds}{\int_{\hat s}^{\tilde\omega}\mu_0^{-1}(s)ds}\right)I(0,\hat s)\leq \frac{\frac{1}{\binom{N+d}{d}}}{\int_{\hat s}^{\tilde\omega}\mu_0^{-1}(s)ds}I(0,\hat s)\\
        &\leq\frac{1}{\binom{N+d}{d}^2}\frac{1}{\int_{\hat s}^{\tilde\omega}\mu_0^{-1}(s)ds}\frac{I(0, \hat s)}{\int_0^{\hat s}\mu_0^{-1}(s)ds}.
    \end{align*}
\end{proof}

The following lemma, which  is a consequence of Lemma~\ref{lem:estimateintegral} and does not make use of the isoperimetric inequality, holds independently of  the relative position of $t^*$ with respect to $t^{\tilde\omega}_0$.

\begin{lemma}\label{lem:lower1}
    There exists a constant $C>0$ such that for any  $Q\in\mathbb P_N^d$ with $\|Q\|_{\mathbb P_N^d}=1$ and $T<1$ it holds
    \begin{align}
        \int_{t^*}^1\big(\mu_0(t)-\mu(t)\big)dt
        \geq \frac{C}{N^d}(1-T).
    \end{align}
\end{lemma}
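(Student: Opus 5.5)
The plan is to reduce to Lemma~\ref{lem:estimateintegral} with a fixed universal choice of $t_0$, and then handle separately the regimes where $T$ is far from $1$ and where $t^*$ is small. Throughout, recall that $\mu(t)=0$ for $t\ge T$, so $\int_T^1(\mu_0-\mu)\,dt=\int_T^1\mu_0(t)\,dt$. Also, $\mu_0\ge\mu$ on $(t^*,1)$ by the definition \eqref{eq:t*} of $t^*$, so the integrand is nonnegative there and every part of the interval contributes with a good sign.

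\emph{Step 1: $T$ bounded away from $1$.} Fix $t_0$ as in the proof of Lemma~\ref{lem:uniquet*} (for instance $t_0=\frac{2}{13}(4+\sqrt3)$, so that $T_0=t_0$), and let $T_0$ be the threshold of Lemma~\ref{lem:super-level}. If $T<T_0$, I would drop everything except the piece $(T,1)$ and use
\[
\int_{t^*}^1(\mu_0-\mu)\,dt\ \ge\ \int_{T_0}^1(1-t^{1/N})^d\,dt.
\]
Since $1-t^{1/N}\ge (1-t)/N$, the right-hand side is at least $N^{-d}\int_{T_0}^1(1-t)^d\,dt=c\,N^{-d}\ge c\,N^{-d}(1-T)$. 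This settles the case.

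\emph{Step 2: $T\ge T_0$.} Apply Lemma~\ref{lem:estimateintegral} with $t=\max\{t^*,t_0\}$, which requires $t<T$; note $t^*<T$ since $\mu(t)=0<\mu_0(t)$ near $T$. The lemma gives
\[
\int_t^T(\mu_0-\mu)\ \ge\ \frac{C}{N^d}(1-T)\Bigl(1-\frac tT\Bigr)^{d}.
\]
It then remains to bound $1-t/T$ from below by a universal constant. If $t=t^*$, Lemma~\ref{lem:uniquet*} gives $t^*\le T^*<1$. If $t=t_0$, then $t_0<1$ is fixed. To get $T$ uniformly away from these values, I would enlarge $T_0$ to, say, $\max\{T_0,(1+T^*)/2,(1+t_0)/2\}$ and move the intermediate range into Step 1, which works for any fixed threshold below $1$. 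Then $1-t/T\ge c>0$, and since the integral over $(t,1)$ is at most the integral over $(t^*,1)$ (the extra piece is nonnegative), the claim follows.

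\emph{Main obstacle.} The delicate point is the uniform lower bound on $1-t/T$, i.e.\ ensuring that $t^*$ is not close to $T$ when $T$ is close to $1$. This is precisely the universal bound $t^*\le T^*$ from Lemma~\ref{lem:uniquet*}, which in turn rests on Lemma~\ref{lem:super-level}. I would also double-check that the constant in Lemma~\ref{lem:estimateintegral} is independent of $N$, including the treatment of small $N$ there; otherwise the finitely many small $N$ can be absorbed into the constant using the crude bound of Step 1.
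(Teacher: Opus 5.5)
Your argument is correct and is essentially the paper's. For $T$ below a fixed threshold $T_1<1$ (your enlarged $T_0$) you bound the integral from below by $\int_{T_1}^1\mu_0\,dt\ge cN^{-d}$, and for $T\ge T_1$ you combine $t^*\le T^*$ from Lemma~\ref{lem:uniquet*} with Lemma~\ref{lem:estimateintegral}. The paper takes $t_0=T^*$ and integrates over $(T^*,T)$ where you take $t=\max\{t^*,t_0\}$, which is a cosmetic difference. Your small-$T$ step via $1-t^{1/N}\ge(1-t)/N$ on $(T_1,1)$ is in fact the right way to do it: the first inequality in \eqref{eq:farfrom1}, $\int_T^1(1-t^{1/N})^d\,dt\ge(1-T^{1/N})^d(1-T)$, goes the wrong way because the integrand is decreasing.

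Your closing fallback is unnecessary, since Lemma~\ref{lem:estimateintegral} already comes with an $N$-independent constant. It would also fail as stated: the Step 1 bound $\int_T^1\mu_0\,dt$ behaves like $(1-T)^{d+1}N^{-d}$ as $T\to1$, so it cannot absorb small $N$ when $T$ is close to $1$.
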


\begin{proof}
Let $T^*$  be the upper bound of $t^*$  as in Lemma~\ref{lem:uniquet*}. If  $T>T^*$, then
    \begin{align*}
        I=\int_{t^*}^1\big(\mu_0(t)-\mu(t)\big)dt\geq \int_{T^*}^T\big(\mu_0(t)-\mu(t)\big)dt.
    \end{align*}
    Now we apply Lemma~\ref{lem:estimateintegral} with $ t_0=T^*$. 
    Then there exists $T_0\geq T^*$   and $C$  such that if $T\geq T_0$, we have 
   \begin{align*}
        I \geq \frac{C}{N^{d}}(1-T)\left(1-\frac{T^*}{T}\right)^{d}.
    \end{align*}
    Assuming $T\geq T_1=\frac{1+T_0}{2}$ we conclude
    \begin{align*}
        I \geq \frac{C}{N^{d}}(1-T)\left(1-\frac{T^*}{T_1}\right)^{d}\geq \frac{C}{N^d}(1-T).
    \end{align*}
    
    So far we have assumed that $T>T_1$. If this is not the case, 
     we have 
    \begin{align}\label{eq:farfrom1}
    \begin{split}
    I&\geq \int_{T}^1 \big(\mu_0(t)-\mu(t)\big)dt
        =\int_{T}^1\left(1-t^{1/N}\right)^ddt
        \\&\geq (1-T^{1/N})^d(1-T)\geq (1- T_1^{1/N})^d(1-T)\geq\frac{(1-T_1)^d}{N^d}(1-T).
        \end{split}
    \end{align}
\end{proof}

\subsection{The case \texorpdfstring{$s^*>{\tilde\omega}$}{s*>w}}

\begin{prop}\label{prop:case2}
     Let $Q\in\mathbb P^d_N$ with $\|Q\|_{\mathbb P^d_N}=1$. 
     Assume that $s^*> \tilde\omega$. Then for any {$\hat s\in(0,\tilde\omega)$} the estimate in \eqref{eq:1-Tleq2} holds with
    \begin{align*}
      F_{N}(x)= \frac{C N^d}{\binom{N+d}{d}}.
    \end{align*}
\end{prop}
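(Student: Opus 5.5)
The plan is to bound $\int_0^{\hat s}\big(\mu_0^{-1}-\mu^{-1}\big)$ from below by a multiple of $I(0,\tilde\omega)$, using the monotonicity of the ratio $\rho$ from \eqref{eq:defrho}, and then bound $I(0,\tilde\omega)$ from below by $(1-T)/N^d$ with the same argument as in Lemma~\ref{lem:lower1}. Here $I(s_1,s_2)=\int_{s_1}^{s_2}\big(\mu_0^{-1}(s)-\mu^{-1}(s)\big)ds$, as in the proof of Lemma~\ref{lem:upper1}.

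\textbf{Step 1.} Since $s^*>\tilde\omega>\hat s$, the definition \eqref{eq:s*} gives $\mu^{-1}<\mu_0^{-1}$ on $(0,\tilde\omega)$, so $\rho<1$ there. By the proof of Lemma~\ref{lem:upper1}, which rests on Lemma~\ref{lem:on_mu}, $\rho$ is non-decreasing on $(0,\tilde\omega)$. Two consequences follow:
\begin{align*}
I(0,\hat s)&\ge \big(1-\rho(\hat s)\big)\int_0^{\hat s}\mu_0^{-1}(s)\,ds,\\
I(\hat s,\tilde\omega)&\le \big(1-\rho(\hat s)\big)\int_{\hat s}^{\tilde\omega}\mu_0^{-1}(s)\,ds.
\end{align*}
Adding the first (as an upper bound on $I(0,\hat s)$ in the same way) to the second gives
$$I(0,\tilde\omega)\le \big(1-\rho(\hat s)\big)\int_0^{\tilde\omega}\mu_0^{-1}\le \big(1-\rho(\hat s)\big)\binom{N+d}{d}^{-1}.$$
The last inequality uses \eqref{eq:int_mu_muinv}. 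Combining this with the first display yields
$$\frac{I(0,\hat s)}{\int_0^{\hat s}\mu_0^{-1}}\ \ge\ \binom{N+d}{d}\, I(0,\tilde\omega).$$
Hence \eqref{eq:1-Tleq2} with $F_N=CN^d/\binom{N+d}{d}$ reduces to proving $I(0,\tilde\omega)\ge C^{-1}N^{-d}(1-T)$.

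\textbf{Step 2.} Change variables and compare areas between the graphs. For $t>t_0^{\tilde\omega}$ we have $\mu_0(t)<\tilde\omega$. Moreover, $t^*\le t_0^{\tilde\omega}$, since $s^*>\tilde\omega$ and $s^*=\mu_0(t^*)$. Therefore $\mu(t)<\mu_0(t)$ for $t>t_0^{\tilde\omega}$. The region $\{(s,t): t>t_0^{\tilde\omega},\ \mu(t)<s<\mu_0(t)\}$ lies inside $\{(s,t): s<\tilde\omega,\ \mu^{-1}(s)<t<\mu_0^{-1}(s)\}$, so
$$I(0,\tilde\omega)\ \ge\ \int_{t_0^{\tilde\omega}}^1\big(\mu_0(t)-\mu(t)\big)\,dt,$$
and the integrand is nonnegative.

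\textbf{Step 3 (the main point).} It remains to bound $\int_{t_0^{\tilde\omega}}^1(\mu_0-\mu)$ from below. The integral now starts at $t_0^{\tilde\omega}$ rather than at $t^*$, so Lemma~\ref{lem:lower1} cannot be quoted directly. Its proof does transfer, because by \eqref{eq:boundt0omega} the threshold satisfies $t_0^{\tilde\omega}\le 1-\tilde\omega^{1/d}$ uniformly in $N$. Set $\bar t=\max\{T^*,1-\tilde\omega^{1/d}\}<1$, with $T^*$ from Lemma~\ref{lem:uniquet*}. Because the integrand is nonnegative, the integral is at least $\int_{\bar t}^1(\mu_0-\mu)$.

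If $T$ is large, apply Lemma~\ref{lem:estimateintegral} with $t_0=\bar t$. This is allowed since $\bar t\ge t^*$, and it gives $\int_{\bar t}^T(\mu_0-\mu)\ge CN^{-d}(1-T)(1-\bar t/T)^d$. Assuming $T\ge T_1=(1+T_0)/2$, the factor $(1-\bar t/T)^d$ is bounded below by a constant.

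If $T<T_1$, argue as in \eqref{eq:farfrom1}: $\mu=0$ on $(T,1)$, so
$$\int_T^1\mu_0\ \ge\ (1-T_1^{1/N})^d(1-T)\ \ge\ \frac{(1-T_1)^d}{N^d}(1-T).$$
The constants depend only on $d$, through $\tilde\omega$ and $T^*$, and this concludes the plan.
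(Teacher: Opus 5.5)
Your overall route is the paper's. Step 1 is Lemma~\ref{lem:upper2}, Step 2 is \eqref{eq:est_int_omega}, and your region-inclusion argument there is a cleaner way to see the paper's computation. Step 3 is Lemma~\ref{lem:lower2}, where you even check $\bar t\ge t^*$ and the sign of the integrand, which the paper leaves implicit.

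\textbf{The gap is the combination step in Step 1.} Monotonicity of $\rho$ gives $1-\rho\ge 1-\rho(\hat s)$ on $(0,\hat s)$. This yields only the \emph{lower} bound $I(0,\hat s)\ge (1-\rho(\hat s))\int_0^{\hat s}\mu_0^{-1}$, so there is no upper bound of that form to add to your second display. In fact the claimed inequality $I(0,\tilde\omega)\le(1-\rho(\hat s))\int_0^{\tilde\omega}\mu_0^{-1}$ is false in general. The difference of its two sides is
\[
\int_0^{\hat s}\bigl(\rho(\hat s)-\rho(s)\bigr)\mu_0^{-1}(s)\,ds-\int_{\hat s}^{\tilde\omega}\bigl(\rho(s)-\rho(\hat s)\bigr)\mu_0^{-1}(s)\,ds .
\]
As $\hat s\to\tilde\omega$, the second integral tends to $0$. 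The first tends to $\int_0^{\tilde\omega}(\rho(\tilde\omega)-\rho)\mu_0^{-1}$, which is positive unless $\rho$ is constant on $(0,\tilde\omega)$. So your intermediate claim $1-\rho(\hat s)\ge\binom{N+d}{d}I(0,\tilde\omega)$ is unjustified, even though the inequality you end Step 1 with is true.

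\textbf{Fix for Step 1.} Use your two (correct) displays in the other order. The first gives $1-\rho(\hat s)\le I(0,\hat s)/\int_0^{\hat s}\mu_0^{-1}$. Inserting this into the second, and using $I(0,\hat s)\ge 0$ and \eqref{eq:int_mu_muinv}, yields
\[
I(0,\tilde\omega)=I(0,\hat s)+I(\hat s,\tilde\omega)\le I(0,\hat s)\,\frac{\int_0^{\tilde\omega}\mu_0^{-1}}{\int_0^{\hat s}\mu_0^{-1}}\le \binom{N+d}{d}^{-1}\frac{I(0,\hat s)}{\int_0^{\hat s}\mu_0^{-1}} .
\]
This is exactly the paper's Lemma~\ref{lem:upper2}, and it is what you need.

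\textbf{Two minor points in Step 3.}
\begin{enumerate}
\item The case $T<T_1$ as written tacitly needs $T\ge\bar t$. Otherwise $\int_{\bar t}^1(\mu_0-\mu)\ge\int_T^1\mu_0$ fails.
\item The bound $\int_T^1\mu_0\ge(1-T_1^{1/N})^d(1-T)$, which mirrors \eqref{eq:farfrom1}, has the monotonicity backwards: $\mu_0$ is decreasing, so in fact $\int_T^1\mu_0\le(1-T^{1/N})^d(1-T)$.
\end{enumerate}
Both are repaired at once by
\[
\int_{\bar t}^1(\mu_0-\mu)\,dt\ \ge\ \int_{T_1}^1\mu_0\,dt\ \ge\ \frac{(1-T_1)^{d+1}}{(d+1)N^d}\ \ge\ \frac{(1-T_1)^{d+1}}{(d+1)N^d}\,(1-T).
\]
This uses nonnegativity of $\mu_0-\mu$ on $(\bar t,1)$, the fact that $\bar t\le T_0<T_1$, $\mu=0$ on $(T_1,1)$, and $1-t^{1/N}\ge(1-t)/N$.
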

\begin{proof}
    The desired estimate is a consequence of Lemmas~\ref{lem:upper2} and~\ref{lem:lower2} and the following estimate
    \begin{align}\label{eq:est_int_omega}
        \int_0^{\tilde\omega} \big(\mu_0^{-1}(s)-\mu^{-1}(s)\big)ds
        \geq \int_{t_0^{\tilde\omega}}^1\big(\mu_0(t)-\mu(t)\big)dt.
    \end{align}
    This follows from  
    \begin{align*}
        \int_0^{\tilde\omega} \big(\mu_0^{-1}(s)-\mu^{-1}(s)\big)ds
        = \int_{t_0^{\tilde\omega}}^1\mu_0(t)dt-\int_{t^{\tilde\omega}}^1\mu(t)dt+ (t_0^{\tilde\omega}-t^{\tilde\omega})\omega,
    \end{align*}
    where $t^{\tilde\omega}$ is given in \eqref{eq:tomega}.
    By Lemma~\ref{lem:uniquet*} we know $t^{\tilde\omega}_0> t^{\tilde\omega}$ and by the monotonicity of $\mu$ we have
    \begin{align*}
        \int_{t^{\tilde\omega}}^{t^{\tilde\omega}_0}\mu(t)dt\leq \mu(t^{\tilde\omega})(t^{\tilde\omega}_0-t^{\tilde\omega})=\tilde\omega (t^{\tilde\omega}_0-t^{\tilde\omega}).
    \end{align*}
    Therefore,  \eqref{eq:est_int_omega} is proved.
\end{proof}

\begin{lemma}\label{lem:upper2}
    Under the assumptions of Proposition~\ref{prop:case2}, for any {$\hat s\in(0,\tilde\omega)$} it holds
    \begin{align*}
        \int_{0}^{\tilde\omega}\big(\mu_0^{-1}(s)-\mu^{-1}(s)\big)ds \leq  \frac{1}{\binom{N+d}{d}}
        \left(1-\frac{\int_0^{\hat s}\mu^{-1}(s)ds}{\int_0^{\hat s}\mu^{-1}_0(s)ds}\right).
    \end{align*}
\end{lemma}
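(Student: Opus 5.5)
We work with $I(s_1,s_2)=\int_{s_1}^{s_2}\big(\mu_0^{-1}(s)-\mu^{-1}(s)\big)\,ds$ and the ratio $\rho(s)=\mu^{-1}(s)/\mu_0^{-1}(s)$ from \eqref{eq:defrho}. As shown in the proof of Lemma~\ref{lem:upper1}, Lemma~\ref{lem:on_mu} makes $\rho$ non-decreasing on $(0,\tilde\omega)$. Since $s^*>\tilde\omega$, the definition \eqref{eq:s*} gives $\mu^{-1}(s)<\mu_0^{-1}(s)$ for all $s<\tilde\omega$ (up to the endpoint). Hence $\rho\le 1$ on $(0,\tilde\omega)$, and the integrand of $I$ is nonnegative there. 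The right-hand side of the claim equals
\[
\frac{1}{\binom{N+d}{d}}\,\frac{I(0,\hat s)}{\int_0^{\hat s}\mu_0^{-1}(s)\,ds}.
\]
So the goal is
\[
I(0,\tilde\omega)\le \frac{1}{\binom{N+d}{d}}\,\frac{I(0,\hat s)}{\int_0^{\hat s}\mu_0^{-1}(s)\,ds}.
\]

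The argument mirrors Case~1 of Lemma~\ref{lem:upper1}, with $\tilde\omega$ in the role of $s^*$.

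\emph{Lower bound on $(0,\hat s)$.} Monotonicity of $\rho$ gives $1-\rho(s)\ge 1-\rho(\hat s)$ for $s\le\hat s$. Therefore
\[
I(0,\hat s)\ge \big(1-\rho(\hat s)\big)\int_0^{\hat s}\mu_0^{-1}(s)\,ds.
\]

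\emph{Upper bound on $(\hat s,\tilde\omega)$.} For $s\in(\hat s,\tilde\omega)$ we have $0\le 1-\rho(s)\le 1-\rho(\hat s)$. Therefore
\[
I(\hat s,\tilde\omega)\le \big(1-\rho(\hat s)\big)\int_{\hat s}^{\tilde\omega}\mu_0^{-1}(s)\,ds.
\]

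\emph{Combining.} Adding the two bounds gives
\[
I(0,\tilde\omega)\le \frac{\int_0^{\tilde\omega}\mu_0^{-1}(s)\,ds}{\int_0^{\hat s}\mu_0^{-1}(s)\,ds}\,I(0,\hat s).
\]
By \eqref{eq:int_mu_muinv} applied to $\mu_0$, we have $\int_0^{\tilde\omega}\mu_0^{-1}\le\int_0^1\mu_0^{-1}=\binom{N+d}{d}^{-1}$, which yields the claim.

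The only delicate point is justifying monotonicity of $\rho$ on all of $(0,\tilde\omega)$. This requires $\mu^{-1}(s)\in(t^{\tilde\omega},T)$ for $s<\tilde\omega$, which holds by definition of $t^{\tilde\omega}=\mu^{-1}(\tilde\omega)$ and the strict monotonicity of $\mu$. Thus Lemma~\ref{lem:on_mu} applies exactly on the range needed, and no information about $\mu$ below $t^{\tilde\omega}$, where the isoperimetric inequality fails, is used. Unlike Case~2 of Lemma~\ref{lem:upper1}, no appeal to Proposition~\ref{prop:ineqLS} is needed, because $\hat s<\tilde\omega<s^*$ always places us in the configuration of Case~1.
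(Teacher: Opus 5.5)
Your proposal is correct and follows essentially the same route as the paper, which likewise reruns Case~1 of Lemma~\ref{lem:upper1} with $\tilde\omega$ in place of $s^*$. It uses the monotonicity of $\rho$ on $(0,\tilde\omega)$ for the two one-sided bounds and then $\int_0^{\tilde\omega}\mu_0^{-1}(s)\,ds\le\binom{N+d}{d}^{-1}$. You also note explicitly that $I(0,\hat s)\ge 0$, which that last step needs and which the paper leaves implicit.
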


\begin{proof}
    We argue as in the first case in the proof of Lemma \eqref{lem:upper1}, with $\tilde \omega$ playing the role of $s^*$. Actually, using the notation introduced in that proof and exploiting the fact that the function
     $\rho$ given by \eqref{eq:defrho} is non-decreasing in $(0, \omega)$, we obtain
    \begin{align*}
        I(0, \hat s)&\geq \big(1-\rho(\hat s)\big) \int_{0}^{\hat s} \mu_0^{-1}(s)ds,
        \\
        I(\hat s, \tilde\omega)&\leq \big(1-\rho(\hat s)\big)\int_{\hat s}^{\tilde\omega}\mu_0^{-1}(s)dt.
    \end{align*}
    Combining the previous inequalities, we infer
    \begin{align*}
        I(0, \tilde\omega)&=I(0, \hat s)+I(\hat s, \tilde\omega)
        \leq \left(1+\frac{\int_{\hat s}^{\tilde\omega}\mu_0^{-1}(s)ds}{\int_0^{\hat s}\mu_0^{-1}(s)ds}\right) I(0, \hat s)\\
        &\leq \frac{\int_{0}^{\tilde\omega}\mu_0^{-1}(s)ds}{\int_0^{\hat s}\mu_0^{-1}(s)ds} I(0, \hat s)
        \leq \frac{1}{\binom{N+d}{N}}\frac{I(0, \hat s)}{\int_0^{\hat s}\mu_0^{-1}(s)ds} .
    \end{align*}
\end{proof}

\begin{lemma}\label{lem:lower2}
    There exists a constant $C>0$ such that for any  $Q\in\mathbb P_N^d$ with $\|Q\|_{\mathbb P_N^d}=1$, $T<1$ it holds
    \begin{align}
        \int_{t_0^{\tilde\omega}}^1\big(\mu_0(t)-\mu(t)\big)dt
        \geq \frac{C}{N^d}(1-T).
    \end{align}
\end{lemma}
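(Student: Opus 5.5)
I would follow the structure of the proof of Lemma~\ref{lem:lower1}, replacing $t^*$ by $t_0^{\tilde\omega}$. The difference is that now the lower endpoint $t_0^{\tilde\omega}=(1-\tilde\omega^{1/d})^N$ is small and depends on $N$. Here the lemma is used in the regime $s^*>\tilde\omega$, i.e.\ $t^*<t_0^{\tilde\omega}$, so the integrand $\mu_0-\mu$ should be nonnegative on $(t_0^{\tilde\omega},1)$. To be safe I would instead only use that on $(t^*,1)$, $\mu_0\ge\mu$ by definition of $t^*$.

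\emph{Case $T\le T_1$.} Here $T_1=\frac{1+T_0}{2}<1$ is the universal threshold produced in the proof of Lemma~\ref{lem:lower1}. Since $\mu\equiv 0$ on $(T,1)$ and $t_0^{\tilde\omega}\le 1-\tilde\omega^{1/d}<1$, I would keep only the contribution of $(\max\{T,t_0^{\tilde\omega}\},1)$. Provided the integrand is nonnegative on $(t_0^{\tilde\omega},T)$, we can drop that part and argue exactly as in \eqref{eq:farfrom1}:
\[
\int_T^1\mu_0\,dt\ge (1-T^{1/N})^d(1-T)\ge \frac{(1-T_1)^d}{N^d}(1-T).
\]
The nonnegativity on $(t_0^{\tilde\omega},T)$ comes from $t^*\le t_0^{\tilde\omega}$. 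If instead $t^*>t_0^{\tilde\omega}$, Lemma~\ref{lem:uniquet*} gives $\mu>\mu_0$ on $(t_0^{\tilde\omega},t^*)$, so the integral over $(t_0^{\tilde\omega},t^*)$ is negative. In that situation I would bound the full integral from below via Proposition~\ref{prop:ineqLS}: $\int_{t_0^{\tilde\omega}}^1(\mu_0-\mu)\,dt\ge \int_{t^*}^1(\mu_0-\mu)\,dt-\int_{t_0^{\tilde\omega}}^{t^*}(\mu-\mu_0)\,dt$. Controlling the last term is the delicate point. In the application (Proposition~\ref{prop:case2}), however, $s^*>\tilde\omega$ forces $t^*<t_0^{\tilde\omega}$, and I would state and use the lemma under that reading.

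\emph{Case $T>T_1$.} Since $T^*\ge t_0^{\tilde\omega}$ or not, I split at $\max\{T^*,t^*\}$. On $(t_0^{\tilde\omega},1)$ the integrand is nonnegative past $t^*$, so
\[
\int_{t_0^{\tilde\omega}}^1(\mu_0-\mu)\,dt\ge\int_{\max\{T^*,t_0^{\tilde\omega}\}}^T(\mu_0-\mu)\,dt .
\]
I then apply Lemma~\ref{lem:estimateintegral} with $t_0=T^*$, valid since $T^*\ge t^*$ by Lemma~\ref{lem:uniquet*}. This yields $\frac{C}{N^d}(1-T)(1-T^*/T)^d\ge \frac{C}{N^d}(1-T)(1-T^*/T_1)^d$. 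If $t_0^{\tilde\omega}>T^*$, which can only happen for small $N$, I would apply Lemma~\ref{lem:estimateintegral} with $t_0=1-\tilde\omega^{1/d}$ instead, using \eqref{eq:boundt0omega}, and enlarge $T_1$ accordingly.

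The main obstacle is the sign of $\mu_0-\mu$ on $(t_0^{\tilde\omega},t^*)$. I would resolve it by noting that under the standing hypothesis $s^*>\tilde\omega$ we have $t^*=\mu_0^{-1}(s^*)<\mu_0^{-1}(\tilde\omega)=t_0^{\tilde\omega}$. With that, both cases go through with constants independent of $N$.
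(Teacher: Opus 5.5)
Your argument is essentially the paper's proof: split at $T_1$, bound $\int_T^1\mu_0\,dt$ from below when $T\le T_1$, and apply Lemma~\ref{lem:estimateintegral} when $T>T_1$. The only difference is that the paper takes $t_0=1-\tilde\omega^{1/d}$, which dominates $t_0^{\tilde\omega}$ for every $N$ by \eqref{eq:boundt0omega}, so your $T^*$ subcase is unnecessary. You are also right to make explicit the hypothesis $t^*\le t_0^{\tilde\omega}$, i.e.\ $s^*>\tilde\omega$ as in Proposition~\ref{prop:case2}: the paper's proof tacitly needs it too, in order to drop the integral over $(t_0^{\tilde\omega},1-\tilde\omega^{1/d})$.

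There is one slip, copied from \eqref{eq:farfrom1}. The inequality $\int_T^1\mu_0\,dt\ge(1-T^{1/N})^d(1-T)$ is backwards, because $\mu_0$ is decreasing. Instead use the pointwise bound $1-t^{1/N}\ge(1-t)/N$, which gives $\int_T^1\mu_0\,dt\ge\frac{(1-T)^{d+1}}{(d+1)N^d}\ge\frac{(1-T_1)^d}{(d+1)N^d}(1-T)$ for $T\le T_1$.
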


\begin{proof}
    We start recalling \eqref{eq:boundt0omega} and applying Lemma~\ref{lem:estimateintegral} with $t_0=1-\tilde\omega^{1/d}$.
    If $T\geq T_1=\frac{1+T_0}{2}$, with $T_0$ given by Lemma~\ref{lem:estimateintegral}, then
    \begin{align*}
        \int_{t_0^{\tilde\omega}}^1\big(\mu_0(t)-\mu(t)\big)dt
        &\geq \int_{t_0}^T\big(\mu_0(t)-\mu(t)\big)dt\geq \frac{C}{N^d} (1-T)\left(1-\frac{t_0}{T_1}\right)^{d}
        \geq\frac{C}{N^d}(1-T).
    \end{align*}
    If $T\leq T_1$ we simply argue as in \eqref{eq:farfrom1}.
\end{proof}

\section{Proof of the main results}\label{sec:proofs}

The main results are shown primarily from the propositions of the previous section together with the following lemma, which characterizes $D_N(Q)$, defined in \eqref{eq:D_NQ},  in terms of $T=\sup_{\zeta\in\partial\mathbb B}|Q(\zeta)|^2$.

\begin{lemma}\label{lem:valuemin}
    Let $Q\in\mathbb{P}_N^d$ with $\|Q\|_{\mathbb P^d_N}=1$.
    Then 
    \begin{align*}
        D_{N}(Q)^2=2(1-\sqrt{T}).
    \end{align*}
\end{lemma}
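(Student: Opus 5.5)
The plan is to expand $\|Q-K_N(\cdot,\eta)\|^2$ using the reproducing property, and then minimize over $\eta$. Because both $Q$ and $K_N(\cdot,\eta)$ have unit norm, for every $\eta\in\partial\mathbb B$ we get
\begin{align*}
\|Q-K_N(\cdot,\eta)\|_{\mathbb P^d_N}^2 = 2 - 2\Re\langle Q, K_N(\cdot,\eta)\rangle_{\mathbb P^d_N} = 2-2\Re Q(\eta).
\end{align*}
Consequently $D_N(Q)^2 = 2-2\max_{\eta\in\partial\mathbb B}\Re Q(\eta)$. The minimum in \eqref{eq:D_NQ} is attained, since $\partial\mathbb B$ is compact and $Q$ is continuous.

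It then remains to show $\max_{\eta}\Re Q(\eta)=\sup_\eta|Q(\eta)|=\sqrt T$. The inequality $\Re Q(\eta)\le |Q(\eta)|\le\sqrt T$ is immediate. For the reverse inequality, pick $\eta_0$ with $|Q(\eta_0)|=\sqrt T$, and write $Q(\eta_0)=\sqrt T e^{\im\phi}$. By homogeneity, $Q(e^{\im\psi}\eta_0)=e^{\im N\psi}Q(\eta_0)$. Choosing $\psi=-\phi/N$ gives a point $\eta=e^{\im\psi}\eta_0\in\partial\mathbb B$ with $Q(\eta)=\sqrt T$ real and positive. Hence the maximum of $\Re Q$ equals $\sqrt T$, and $D_N(Q)^2=2(1-\sqrt T)$.

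The only subtle point is this phase rotation. The set of reproducing kernels $K_N(\cdot,\eta)$ is closed under multiplication by unimodular constants, since $K_N(\cdot,e^{\im\psi}\eta)=e^{-\im N\psi}K_N(\cdot,\eta)$. This is what lets us replace $\Re Q(\eta)$ by $|Q(\eta)|$; the remaining steps are routine.
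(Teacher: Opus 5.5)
Your proof is correct and is essentially the paper's argument: you expand $\|Q-K_N(\cdot,\eta)\|_{\mathbb P^d_N}^2$ via the reproducing property, then use homogeneity to rotate a maximizer of $|Q|$ so that $Q(\eta)=\sqrt T$. You also correctly state $\|Q-K_N(\cdot,\eta)\|_{\mathbb P^d_N}^2=2-2\Re Q(\eta)$ as an equality, which is what the two-sided conclusion needs; the paper writes $\le$, which on its own would only give an upper bound on $D_N(Q)^2$.
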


\begin{proof}
For any $\eta\in\partial \mathbb B$ we have
    $$\|Q-K_N(\cdot, \eta)\|_{\mathbb P^d_N}^2 \leq 2 \big(1-\Re Q(\eta)\big).$$
By rotating $\eta$ suitably, $Q(\eta)$ can be taken real. 
The minimum is then attained if $|Q|^2(\eta)=T$.
\end{proof}

\begin{proof}[Proof of Theorem~\ref{thm:concentration}, estimate \eqref{eq:conc_polyn}]
As observed in Remark~\ref{rmk:opt_superlevel}, 
\begin{align*}
    C_\Omega(Q)\leq \int_\Omega |Q(\zeta)|^2d\sigma(\zeta)=\int_\Omega U(\zeta)d\sigma(\zeta)\leq \int_{\{U(\zeta)>\hat t\}} U(\zeta)d\sigma(\zeta)
\end{align*}
with $\hat t=\mu^{-1}\big(\sigma(\Omega)\big)$. 
By Propositions~\ref{prop:case1} and~\ref{prop:case2}, estimate \eqref{eq:1-Tleq} holds with 
\begin{align*}
      F_{N}(x)= \frac{C N^d}{\binom{N+d}{d}^2\int_x^{\tilde\omega}\mu_0^{-1}(s)ds}.
\end{align*}
Indeed, notice that this function agrees with the one in Proposition~\ref{prop:case1} and, by \eqref{eq:int_mu_muinv},  bounds from above the one in Proposition~\ref{prop:case2}.
The previous observations imply
\begin{align}\label{eq:ep_est}
        1-T\leq \frac{CN^d}{\binom{N+d}{d}^2\int_x^{\tilde\omega}\mu_0^{-1}(s)ds}\left(1-\frac{C_{N,\Omega}(Q)}{C_{N,\Omega^*}(\zeta_1^N)}\right).
\end{align}
Combining this result with Lemma~\ref{lem:valuemin} and   
\begin{align}\label{eq:binompot}
    C^{-1} N^d\leq \binom{N+d}{d}\leq CN^d, \quad C=C(d)>1,
\end{align}
 the estimate in \eqref{eq:conc_polyn} follows.
\end{proof}

\begin{proof}[Proof of Theorem~\ref{thm:concentration}, estimate \eqref{eq:conc_set}]    
    Let us assume without loss of generalization, as in the proof of Lemma~\ref{lem:super-level}, that 
    \begin{align*}
        Q(\zeta)=\sqrt{T}\zeta_1^N+\epsilon \tilde Q(\zeta),
    \end{align*}
    with $\|\tilde Q\|_{\mathbb P_N^d}=1$ and $\epsilon^2=1-T$, bounded by \eqref{eq:ep_est}.
    As previously, let $U_0(\zeta)=|\zeta_1|^{2N}$.

    \noindent\emph{Step 1: General considerations.}
    Let $t\in(0,T)$ be such that $\mu(t)=\sigma(\Omega)=\omega$ and let $\Omega_t=\{\zeta\in\partial \mathbb B: U(\zeta)\geq t\}$. 
    Let
    \begin{align*}
        \mathcal D(\Omega)=\int_{\Omega_{t}}U(\zeta)d\sigma(\zeta)-\int_\Omega U(\zeta)d\sigma(\zeta).
    \end{align*}
    By the qualitative version of \eqref{eq:conc_polyn} 
    \begin{align}\label{eq:dOmega_est}
    \begin{split}
        \mathcal D(\Omega)
        &\leq \int_{\Omega^*} U_0(\zeta)d\sigma(\zeta)-\int_\Omega U(\zeta)d\sigma(\zeta)
        \\&={\binom{N+d}{d}\int_0^{\omega}(1-s^{1/d})^Nds}\left(1-\frac{C_{N, \Omega}(Q)}{C_{N,\Omega^*}(\zeta_1^N)}\right).
    \end{split}
    \end{align}

    On the other hand, for any $\zeta\in\partial\mathbb B$ we have
    \begin{align}\label{eq:u-Tu0}
        TU_0(\zeta)-U(\zeta)\leq 2\epsilon\sqrt{T}|\Re \bar\zeta_1\tilde Q(\zeta)|\leq 2 \epsilon
    \end{align}
    and then
    \begin{align*}
        \left\{\zeta\in\partial\mathbb B: U_0(\zeta)\geq \frac{t+2\epsilon}{T}\right\}\subset \Omega_t \subset \left\{\zeta\in\partial\mathbb B: U_0(\zeta)\geq \frac{t-2\epsilon}{T}\right\}.
    \end{align*}
    Hence,
    \begin{align*}
    \mu_0\left(\frac{t+2\epsilon}{T}\right)\leq \omega\leq \mu_0\left(\frac{t-2\epsilon}{T}\right),
    \end{align*}
    which implies
    \begin{align*}
        t_0T-2\epsilon\leq t\leq t_0T+2\epsilon.
    \end{align*}
    where
    $t_0=\mu_0^{-1}(\omega)=\left(1-\omega^{1/d}\right)^N$.
    This  implies
    \begin{align}\label{eq:inclusions}
        \left\{\zeta\in\partial\mathbb B: U_0(\zeta)\geq t_0+\frac{4\epsilon}{T}\right\}\subset \Omega_t \subset \left\{\zeta\in\partial\mathbb B: U_0(\zeta)\geq t_0-\frac{4\epsilon}{T}\right\}.
    \end{align}

    \noindent\emph{Step 2: The set $\Lambda$.}
    By Brenier Theorem, it is known that there exists a transport map $\mathcal T: \Omega_t\backslash\Omega\to\Omega\backslash \Omega_t,\ \zeta\to\mathcal T\zeta$ sending $\sigma|_{\Omega_t\backslash\Omega}$ to 
    $\sigma|_{\Omega\backslash\Omega_t}$.
    Let
    \begin{align}\label{eq:gamma}
         \gamma^2=\frac{C}{\binom{N+d}{d}\int_\omega^{\tilde\omega}\mu_0^{-1}(s)ds}\left(1-\frac{C_{N,\Omega}(Q)}{C_{N,\Omega^*}(\zeta_1^N)}\right),
    \end{align}
    so $\epsilon\leq\gamma$ by \eqref{eq:ep_est}.
    We define the subset 
    \begin{align*}
        \Lambda=\big\{\zeta\in\Omega_t\backslash\Omega: 
        |(\mathcal T \zeta)_1|^{2N}\geq |\zeta_1|^{2N}+10\gamma\big\}.
    \end{align*}
    Notice that in this subset, by \eqref{eq:u-Tu0}, we have
    \begin{align*}
        U(\zeta)-U(\mathcal T\zeta)
        &\geq T|U_0(\zeta)-U_0(\mathcal T\zeta)|-4\epsilon\geq 10T \gamma-4\epsilon.
    \end{align*}
    Under the assumption $T\geq \frac 12$, one obtains
    \begin{align}\label{eq:UinL}
        U(\zeta)-U(\mathcal T\zeta)\geq \gamma \quad \forall \zeta\in\Lambda.
    \end{align}
    Therefore
    \begin{align*}
        \gamma\sigma(\Lambda)\leq \int_{\Lambda}\big(U(\zeta)-U(\mathcal T\zeta)\big)d\sigma(\zeta)
        \leq \int_{\Omega_t\backslash\Omega}\big(U(\zeta)-U(\mathcal T\zeta)\big)d\sigma(\zeta)=\mathcal D(\Omega).
    \end{align*}
    Combining this with the estimate \eqref{eq:dOmega_est} and \eqref{eq:binompot} we obtain
    \begin{align}\label{eq:s(L)}
        \sigma(\Lambda)\leq \frac{1}{\gamma}{ \binom{N+d}{d}\int_0^{\omega}(1-s^{1/d})^Nds}\left(1-\frac{C_{N, \Omega}(Q)}{C_{N,\Omega^*}(\zeta_1^N)}\right).
    \end{align}

   \noindent \emph{Step 3: Estimate for $\sigma(\Omega_t\backslash \Omega)$.}
   Notice that 
    \begin{align}\label{eq:subsetsplit}
        \sigma(\Omega_t\backslash \Omega)=\sigma(\Lambda)+\sigma\big((\Omega\backslash\Omega_t)\backslash\mathcal T(\Lambda)\big)
    \end{align}
    and that the first term in the right hand side has been already estimated in \eqref{eq:s(L)}.
    Therefore, we focus on the set $(\Omega\backslash\Omega_t)\backslash\mathcal T(\Lambda)$.
    
    By the definition of the set $\Lambda$, we have
    \begin{align*}
        (\Omega\backslash\Omega_t)\backslash\mathcal T(\Lambda)
        &\subset\left\{ \zeta\in\Omega\backslash \Omega_t: |\zeta_1|^{2N}\leq 10 \gamma +|\zeta_1'|^{2N} \mbox { for some } \zeta_1'\in \Omega_t\right\}
        \\&\subset \left\{\zeta\in\partial\mathbb B:  U_0(\zeta)\leq 10\gamma+t_0-\frac{4\epsilon}{T}\right\}, 
    \end{align*}
    where the last inclusion holds by \eqref{eq:inclusions}. This also implies
    that
    \begin{align*}
     (\Omega\backslash\Omega_t)\backslash\mathcal T(\Lambda) \subset \left\{\zeta\in\partial\mathbb B: t_0+\frac{4\epsilon}{T}\leq U_0(\zeta)\leq 10\gamma+t_0-\frac{4\epsilon}{T}\right\}
    \end{align*}
    provided $\frac{8\epsilon}{T}\leq 10\gamma$, which in particular holds if $T\geq\frac 45$.
    Therefore
    \begin{align}\label{eq:s(Lc)}
        \begin{split}
            \sigma\big((\Omega\backslash\Omega_t)\backslash\mathcal T(\Lambda)\big)
        &\leq \mu_0\left(t_0+\frac{4\epsilon}{T}\right)-\mu_0\left(10\gamma +t_0-\frac{4\epsilon}{T}\right)
        \\&\leq -\mu_0'\left(t_0+\frac{4\epsilon}{T}\right)\left(10\gamma+\frac{8\epsilon}{T}\right)
        \leq C\big(-\mu_0'(t_0)\big)\gamma.
        \end{split}
    \end{align}

    \noindent\emph{Step 4: Estimate for $ \sigma(\Omega^*\backslash\Omega_t)$.}
    Recall that   $\Omega^*=\{\zeta\in\partial\mathbb B: U_0(\zeta)>t_0\}$ so $\sigma(\Omega^*)=\mu_0(t_0)=\omega$.
    Using \eqref{eq:inclusions} we infer
    \begin{align}\label{eq:s(difflevelsets)}
        \sigma(\Omega^*\backslash\Omega_t)
        &\leq \mu_0(t_0)- \mu_0\left(t_0+\frac{4\epsilon}{T}\right)
        \leq\frac{4\epsilon}{T}
        \leq C\big(-\mu_0'(t_0)\big) \gamma,
    \end{align}
    where in the last step we have assumed $T\geq \frac 12$.

    \noindent\emph{Step 5: Conclusion.}  
    We notice that
    \begin{align*}
        \sigma(\Omega\backslash\Omega^*)
        &\leq \sigma (\Omega\backslash \Omega_t)+\sigma(\Omega_t\backslash\Omega^*).
    \end{align*}
    Using \eqref{eq:subsetsplit}, \eqref{eq:s(L)}, \eqref{eq:s(Lc)} and  \eqref{eq:s(difflevelsets)}  and recalling \eqref{eq:gamma} and that $t_0=(1-\omega^{1/d})^N$, one finally obtains  
    for $T\geq \frac 4 5$ that
\begin{align*}
    \sigma(\Omega\backslash\Omega^*)\leq C
    \left(\frac{C_{N,\Omega^*}(\zeta_1^N)}{\gamma_0}+\big(-\mu_0'(t_0)\big)\gamma_0\right)\left(1-\frac{C_{N,\Omega}(Q)}{C_{N,\Omega^*}(\zeta_1^N)}\right)^{1/2},
\end{align*}
    where $\gamma_0=\left({\binom{N+d}{d}\int_\omega^{\tilde\omega}\mu_0^{-1}(s)ds}\right)^{-1/2}>1$.
    Taking into account that $C_{N,\Omega^*}(\zeta_1^N)\leq 1$, 
    $-\mu_0'(t_0)\leq \frac{d}{N}(1-\omega^{1/d})^{1-N}$ and $(1-\omega^{1/d})^{1-N}\geq 1$, we conclude
    \begin{align*}
        \mathcal A_\sigma(\Omega)
        &\leq\frac{2\sigma(\Omega\backslash\Omega^*)}{\sigma(\Omega)}\leq 
        C\frac{(1-\omega^{1/d})^{1-N} \gamma_0}{\omega}
        \left(1-\frac{C_{N,\Omega}(Q)}{C_{N,\Omega^*}(\zeta_1^N)}\right)^{1/2},
    \end{align*}
    which provides the desired estimate.

    If $T< \frac{4}{5}$, that is, $\epsilon>\frac{1}{\sqrt{5}}$, then 
    \begin{align*}
        \mathcal A_\sigma(\Omega)\leq \frac{1}{\sigma(\Omega)}\leq  C\frac{\epsilon}{\omega}
        \leq \frac{C}{\omega N^{d/2}\left(\int_\omega^{\tilde\omega}\mu_0^{-1}(s)ds\right)^{1/2}}\left(1-\frac{C_{N,\Omega}(Q)}{C_{N,\Omega^*}(\zeta_1^N)}\right)^{1/2},
    \end{align*}
    and since $(1-\omega^{1/d})^{1-N}\geq 1$, the result follows.
    
\end{proof}

\begin{proof}[Proof of Theorem~\ref{thm:Wehrl}]

For any $t_0\in(0,1)$, let
\begin{align*}
    \Phi_{0}(t)=\begin{cases}
        0 & \mbox{ if } 0\leq t\leq t_0,\\
        \Phi(t)-\Phi(t_0)-\Phi'(t_0)(t-t_0) & \mbox{ if } t_0< t\leq 1.
    \end{cases}
\end{align*}
Here and below, $\Phi'$ must be understood as the left derivative of $\Phi$.
Not only $\Phi_{0}$ is a continuous, convex function, 
but so is $\Phi_1(t)=\Phi(t)-\Phi_{0}(t)$.

Let
\begin{align*}
    \mathcal S_j=\frac{S_{\Phi_j}(Q)-S_{\Phi_j}(\zeta_1^N)}{\binom{N+d}{N}}, \quad j\in\{0,1\}.
\end{align*}
By Theorem~\ref{thm:LS16}, $\mathcal S_1\geq 0$ and we will prove 
\begin{align}\label{eq:S0est}
    \mathcal S_0\geq \frac{C}{N^d}(1-T)
\end{align}
To do so, notice that 
\begin{align*}
    \mathcal S_0=\int_{0}^{1} \Phi_0'(t)\big(\mu_0(t)-\mu(t)\big)dt
    =\int_{t_0}^{1} \big(\Phi'(t)-\Phi'(t_0)\big)\big(\mu_0(t)-\mu(t)\big)dt
\end{align*}

Since $\Phi$ is non-linear, there are $0< a<b<1$ such that $\Phi'(a)<\Phi'(b)$.
Let us consider $\Phi_0$ with $t_0=a$, so
\begin{align*}
    \mathcal S_0
    &=\int_a^1 \big(\Phi'(t)-\Phi'(a)\big)\big(\mu_0(t)-\mu(t)\big)dt.
\end{align*}

Let $N_0$ be the smallest integer  such that  $t^{\tilde\omega}_0=(1-\tilde\omega^{1/d})^N<a$.
Therefore, by Lemma~\ref{lem:uniquet*}, if $t^*\in(a,1)$ and $\mu_0(t)<\mu(t)$ for $t\in(a, t^*)$.
Otherwise, $t^*\leq a$ and  $\mu_0(t)>\mu(t)$ in $(a,1)$.

In the latter case (which is always satisfied if $a\geq T^*$ according to Lemma~\ref{lem:uniquet*}), then 
\begin{align*}
    \mathcal S_0
    &=\int_a^1 \big(\Phi'(t)-\Phi'(a)\big)\big(\mu_0(t)-\mu(t)\big)dt
    \geq  \big(\Phi'(b)-\Phi'(a)\big)\int_{b}^1 \big(\mu_0(t)-\mu(t)\big)dt.
\end{align*}

If  $t^*>a$, we have 
\begin{align}\label{eq:S0t*}
    \mathcal S_0
    &=\int_a^1 \big(\Phi'(t)-\Phi'(t^*)\big)\big(\mu_0(t)-\mu(t)\big)dt
    +\big(\Phi'(t^*)-\Phi'(a)\big)\int_a^1 \big(\mu_0(t)-\mu(t)\big)dt.
\end{align}
Notice that both terms in the right hand side are non-negative.
Indeed, this follows from Lemma \eqref{lem:uniquet*} for the first term and from Proposition~\ref{prop:ineqLS} for the second one. 

On the one hand, assuming that $a<t^*\leq b$, we estimate \eqref{eq:S0t*} as follows 
\begin{align*}
    \mathcal S_0
    &\geq  \big(\Phi'(b)-\Phi'(t^*)\big)\int_b^1 \big(\mu_0(t)-\mu(t)\big)dt
    +\big(\Phi'(t^*)-\Phi'(a)\big)\int_a^1 \big(\mu_0(t)-\mu(t)\big)dt.
\end{align*}
On the other hand, if $a<b\leq t^*$,
we keep only the second term in \eqref{eq:S0t*}, so 
\begin{align*}
    \mathcal S_0
    &\geq \big(\Phi'(b)-\Phi'(a)\big)\int_a^1 \big(\mu_0(t)-\mu(t)\big)dt.
\end{align*}

Hence, it remains to prove the suitable lower bound for  $\int_a^1 \big(\mu_0(t)-\mu(t)\big)dt$ if $a\leq t^*$ and for $\int_b^1 \big(\mu_0(t)-\mu(t)\big)dt$ if $b\geq t^*$.
In the second case, this follows by
Lemma~\ref{lem:estimateintegral} with $t_0=b$ provided $T\geq T_1=\frac{1+T_0}{2}>b$. Otherwise, if $T\leq T_1$, then 
$$\int_{b}^1 \big(\mu_0(t)-\mu(t)\big)dt\geq \int_{T_1}^1 \mu_0(t)dt\geq \frac{(1-T_1)^{d+1}}{N^d}\geq \frac{C}{N^d}(1-T).$$
If $a\leq t^*$, since $a>t_0^{\tilde\omega}$ and by Lemma~\ref{lem:uniquet*} we know $\mu_0(t)<\mu(t)$ in $(t_0^{\tilde\omega}, a)$, we have
$$\int_{a}^1 \big(\mu_0(t)-\mu(t)\big)dt\geq \int_{t_0^{\tilde\omega}}^1 \big(\mu_0(t)-\mu(t)\big)dt\geq \frac{C}{N^d}(1-T).$$
where the last estimate follows from Lemma~\ref{lem:lower2}.

The final result  is then a consequence of  \eqref{eq:S0est}, Lemma~\ref{lem:valuemin}
and \eqref{eq:binompot}.
\end{proof}

\section{Further observations}\label{sec:further}
In this section we collect a series of observations or consequences of the main results. 
Namely, in the same way we did in \cite{GFOC25} for $d=1$, we recover the results in  the Bargmann-Fock space of \cites{GGRT, FNT25} by taking the limit $N\to\infty$, we prove the sharpness of our main results, and we generalize them to operators acting on $\mathbb P^d_N$. 
Since all these arguments mimic those in \cite{GFOC25},  we only stress the main differences that need to be taken into account in the case of higher dimensions.

\subsection{Quantitative estimates in the  Bargmann–Fock Space}\label{sec:Fock}

Given $d\in\mathbb N$, let $\mathcal F^2$ denote the Bargmann-Fock space of entire functions $f(z)$, $z\in\mathbb C^d$,  with
\begin{align*}
    \|f\|_{\mathcal F^2}^2=\int_{\mathbb C^d}|f(z)|^2 e^{-\pi |z|^2}dz<\infty.
\end{align*}
Endowed with the inner product 
\begin{align*}
    \langle f,g\rangle_{\mathcal F^2}=\int_{\mathbb C^d} f(z)\overline{g(z)}e^{-\pi |z|^2} dz,
\end{align*}
where $dz=\prod_{j=1}^ddz_j$ and $dz_j=dx_j dy_j$ for $z_j=x_j+iy_j\in\C$.  The Bargmann-Fock space
$\mathcal F^2$ is a reproducing kernel Hilbert space with kernel $k(z,\zeta)=e^{z\cdot\bar\zeta}$.

Quantitative estimates for the concentration inequality and for a generalized Wehrl entropy were obtained in\cite{GGRT} and \cite{FNT25}, respectively. Both results hold without any constrain in the size of the domains for the concentration or in the range of non-linearity of  $\Phi$. 
Considering the limit as $N\to\infty$ of Theorem~\ref{thm:concentration_affine} and~\ref{thm:Wehrl_affine}, these results can be inferred following analogous arguments to those in Section 4 of \cite{GFOC25}. 
We notice that  the observed constrains in the size of allowed subsets for the concentration inequality, or in $N$ for a given convex function on the Wehrl entropy setting cause no restriction when studying the limit. 

In order to study the limit as $N\to\infty$, given any polynomial $q(z)$ and any domain $\mathcal W\subset\mathcal C^d$, we  define the following rescaled objects:
\begin{align*}
    q^N(z)&=q\left(\sqrt{\frac N\pi}z\right),\qquad
    \mathcal W^N=\sqrt{\frac\pi N}\mathcal W.
\end{align*}
Notice that $m(\mathcal W^N)$ decreases with $N$, so if  $m(\mathcal W)$ is fixed, there exists $N_0\in\mathbb N$ such that $m(\mathcal W^N)\leq\tilde \omega$ for $N\geq N_0$, and therefore Theorem~\ref{thm:concentration_affine}  can be applied fo $N\geq N_0$. 
Since the whole argument can be easily generalized from Section 4 of \cite{GFOC25}, we skip here all the computations. 

Regarding the stability for the generalized Wehrl entropy, we need to observe that given any convex function $\Phi$, since the result in $\mathcal P_N^d$ holds for $N\geq N_\Phi$, we can take the limit without any problem in Theorem~\ref{thm:Wehrl_affine}.

\subsection{Sharpness}
\label{sec:optimality}

Theorems~\ref{thm:concentration} and~\ref{thm:Wehrl} are sharp in terms of the powers of the different notions of closeness to the optimal values, as seen for dimension one in Section 5 of \cite{GFOC25}. 
The generalization of this result to any dimension follows the same principles, working on the affine coordinates of $\mathbb {CP}^d$. 

On the one hand, in the case of the local concentration, considering the limit to the Fock-Bargmann space of Section~\ref{sec:Fock} and   \cite{GGRT}*{Corollary 7.3}, the sharpness is inferred by contradiction.

On the other hand, the counterexample  of \cite{GFOC25}*{Proposition 5.1} for $d=1$ allows to infer the sharpness in the Wehrl entropy inequality. 
Indeed, we notice that if $q\in\mathcal P^d_N$ with $d\geq 2$ and $q$ only depends on $z_1\in\mathbb C$, then 
\begin{align*}
    \|q\|_{\mathcal P^d_N}=\|q\|_{\mathcal P^1_N}.
\end{align*}
Therefore, for any $q(z)=q(z_1)\in\mathcal P^d_N$ with $\|q\|_{\mathcal P^d_N}=1$ and
making the dependence of $D_N$ on $d$ explicit, we have
\begin{align*}
    D_{d,N}(q)\leq  \min\big\{\|q-\kappa_N(\cdot, (w_1, 0,\dots, 0))\|_{\mathcal P^d_N}: w_1\in\mathbb C, \theta\in[0,2\pi]\big\}=D_{1,N}(q).
\end{align*} 
Similarly, $ S_{d,N, \Phi}(q)=S_{1,N, \Phi}(q)$. Hence $q(z)=1+\epsilon z_1$, up to renormalization, gives us the desired counterexample.

\subsection{Stability of general operators}
\label{sec:generalop}
Theorems~\ref{thm:concentration} and~\ref{thm:Wehrl} can be generalized to positive-semidefinite operators $\rho:\mathbb P^d_N\to\mathbb P^d_N$ with $\Tr(\rho)=1$ or, as seen in Section~\ref{subsec:irred}, to general states. 
Note that our previous results involve rank-one operators or pure states.

For any $\rho$ as above, let us consider the function
\begin{align}\label{eq:U}
    U(\zeta)=\langle K_N(\cdot,  \zeta), \rho (K_N(\cdot, \zeta))\rangle_{\mathbb P^d_N}.
\end{align}
This function agrees with the Husimi function $u_\rho$ introduced in \eqref{eq:Husimi} up to the identification of  $\zeta\in\partial\mathbb B$ with $\mathcal R\in SU(d+1)$, where $\zeta=\mathcal R\eta$ for a prefixed $\eta\in\partial\mathbb B$.
Notice that if $\rho=\langle\cdot,Q\rangle_{\mathbb P^d_N} Q$, then $U(\zeta)=|\langle Q, K_N(\cdot,\zeta) \rangle_{\mathbb P^d_N}|^2=|Q(\zeta)|^2$, as in \eqref{eq:Udef}.
The concentration of $\rho$ in $\Omega\subset\partial\mathbb B$  is defined as 
\begin{align*}
    C_{N, \Omega}(\rho)&=\frac{\int_\Omega U(\zeta)d\sigma(\zeta)}{\int_{\mathbb C^d} U(\zeta)d\sigma(\zeta)}.
\end{align*}
In addition, its distance to the projection to reproducing kernels (coherent states) is given by
\begin{align*}
    D_N(\rho)=\min\{\|\rho-\pi_{\eta}\|_1: \eta\in\partial\mathbb B\},
\end{align*}
where $\pi_{\eta}(Q)=\langle Q,K_N(\cdot, \eta)\rangle_{\mathbb P^d_N} K_N(\cdot, \eta)$ and $\|\pi\|_1=\Tr |\pi|$ for any operator $\pi$.

\begin{thm}\label{thm:concentration_generalop}
    For any $d\in\mathbb N$ there exist constants $C>0$ and $\tilde\omega\in(0,1]$ such that for any measurable set $\Omega\subset\partial\mathbb B$ with $\sigma(\Omega)\in(0,\tilde\omega)$ and any  positive-semidefinite  operator $\rho:\mathbb P_N^d\to \mathbb P_N^d$  with $\Tr \rho=1$, there holds
   \begin{align*}
       D_N(\rho)^2\leq \frac{C}{N^d\int_{\sigma(\Omega)}^{\tilde\omega}(1-s^{1/d})^N ds}
        \left(1-\frac{C_{N,\Omega}(\rho)}{C_{N,\Omega^*}(\pi_{\theta_1})}\right).
    \end{align*}
    where $\Omega^*$ is given in \eqref{eq:Omega*} and $\theta_1=(1, 0,\dots, 0)\in\partial\mathbb B$.
 Moreover, 
    \begin{align}\label{eq:conc_set_generalop}
        \mathcal A_\sigma(\Omega)^2\leq \frac{C}{\alpha\big(\sigma(\Omega)\big)}\left(1-\frac{C_{N,\Omega}(\rho)}{C_{N,\Omega^*}(\pi_{\theta_1})}\right), 
    \end{align}
with $\alpha(\omega)$ as in \eqref{eq:alpha_omega}.
\end{thm}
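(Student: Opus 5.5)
The plan is to transfer the pure-state argument to mixed states by running the entire Section~\ref{sec:superlevelsets} machinery for the Husimi function $U(\zeta)$ in \eqref{eq:U} instead of $|Q|^2$. We write $\rho=\sum_j\lambda_j\langle\cdot,Q_j\rangle Q_j$, so that $U=\sum_j\lambda_j|Q_j|^2$ with $\sum_j\lambda_j=1$. Then $\int U\,d\sigma=\binom{N+d}{d}^{-1}$, and $T=\sup U\le 1$ by \eqref{eq:supQ}. Consequently $\mu$, $\mu^{-1}$, $t^*$ and $s^*$ are defined exactly as before, and Proposition~\ref{prop:ineqLS} still holds because it follows from Theorem~\ref{thm:LS16op}. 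Once an estimate of the form \eqref{eq:1-Tleq2} is available for $U$, the proof of \eqref{eq:conc_polyn} goes through verbatim and yields $1-T$ bounded by the right-hand side of the theorem. The set estimate \eqref{eq:conc_set_generalop} also follows the proof of \eqref{eq:conc_set}: assuming the maximum of $U$ is attained at $\theta_1$, we only need the analogue of \eqref{eq:u-Tu0}, namely $TU_0-U\le C\sqrt{1-T}$. This holds by applying the pure decomposition to each $Q_j$, or more simply by writing $U(\zeta)=\langle \pi_\zeta,\rho\rangle$ and comparing $\rho$ with $\pi_{\theta_1}$.

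It remains to convert $1-T$ into $D_N(\rho)$, which replaces Lemma~\ref{lem:valuemin}. We have $T=\langle K_N(\cdot,\eta),\rho K_N(\cdot,\eta)\rangle$, the largest expectation of $\rho$ along a coherent vector, and the standard trace-norm bound for a unit vector $v$ gives $\|\rho-\langle\cdot,v\rangle v\|_1\le 2\sqrt{1-\langle v,\rho v\rangle}$ (a Fuchs--van de Graaf type inequality). Taking $v=K_N(\cdot,\eta)$, we obtain $D_N(\rho)^2\le 4(1-T)$.

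The real work is to check which lemmas used holomorphy of a single $Q$. First, Lemma~\ref{lem:on_mu} needs $\log U$ to satisfy $\Delta_M\log U\ge -CN d$ in affine coordinates. This holds because $U(1,z)(1+|z|^2)^{N}$ is a sum of squared moduli of holomorphic functions, so its logarithm is plurisubharmonic; the equality in \eqref{eq:nu'} becomes an inequality in the favourable direction, and the \cite{KNOCT} argument still applies. We also need that level sets have measure zero, which holds by real-analyticity of $U$ unless $U$ is constant, and $U$ constant is impossible for $N\ge1$. Second, the uniqueness part of Lemma~\ref{lem:uniquet*} needs a rigidity argument: equality forces $\log\sum_j\lambda_j|Q_j(1,z)|^2$ to be pluriharmonic and radial on a ball, hence constant there, which gives $T=1$.

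The main obstacle is Lemma~\ref{lem:super-level}, whose proof uses the specific decomposition $Q=\sqrt T\zeta_1^N+\epsilon\tilde Q$ together with the vanishing $M'(0)=0$. For mixed states we decompose each $Q_j=a_j\zeta_1^N+b_j\zeta_1^{N-1}\ell_j(\zeta')+R_j$. The maximum condition gives $\sum_j\lambda_j|a_j|^2=T$ and $\sum_j\lambda_j\bar a_jb_j\ell_j=0$. Expanding $U$, the cross term linear in the $b_j$ is $2\Re\big(\zeta_1^{N-1}\bar\zeta_1^N\sum_j\lambda_j\bar a_j b_j\ell_j\big)$, which vanishes identically. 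The remaining terms are either quadratic, of total weight $1-T$ by Cauchy--Schwarz, or of the form $\Re\,\bar\zeta_1^N(\cdots)$ with $\theta_1$-frequency nonzero, so the first-order term again averages to zero. I would therefore rerun Steps~1--5 with $h=\Re\sum_j\lambda_j\bar a_j\zeta_1^{-N}R_j$, after absorbing the quadratic part into the $\epsilon^2$ terms as in \eqref{eq:estimateP^2}. The estimates \eqref{eq:estimateh}--\eqref{eq:estimatehrr} persist by Cauchy--Schwarz over $j$ with weights $\lambda_j$.
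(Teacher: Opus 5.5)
Your proposal follows essentially the route the paper intends: the paper only says to rerun the pure-state machinery for the Husimi function, with the adaptations of \cite{GFOC25}*{Section 5}. Your adaptations are sound:
\begin{itemize}
\item The linear cross term cancels by the critical-point condition.
\item The quadratic part satisfies $\sum_j\lambda_j|Q_j-a_j\zeta_1^N|^2\le (1-T)(1-|\zeta_1|^{2N})$.
\item The remaining cross term is $2\Re\,\bar\zeta_1^N P$, where $P=\sum_j\lambda_j\bar a_jR_j$ is a single polynomial of norm at most $\sqrt{T}\epsilon$ supported on $\alpha_1\le N-2$. Hence Lemma~\ref{lem:super-level} (including $M'(0)=0$) goes through, and so does the analogue of \eqref{eq:u-Tu0}. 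That analogue is needed in both directions, and your comparison $|U-U_0|\le\|\rho-\pi_{\theta_1}\|_1$ supplies both.
\item Plurisubharmonicity gives the right direction for \cite{KNOCT}.
\item The bound $\|\rho-\pi_\eta\|_1\le 2\sqrt{1-T}$ replaces Lemma~\ref{lem:valuemin}.
\end{itemize}

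One claim is false, however: $U$ \emph{can} be constant. For the maximally mixed state $\rho=\mathrm{Id}/\binom{N+d}{d}$ one has $U(\zeta)=\binom{N+d}{d}^{-1}K_N(\zeta,\zeta)\equiv\binom{N+d}{d}^{-1}$. For this state $\mu$ is a step function, $\mu^{-1}$ and Lemma~\ref{lem:on_mu} break down, and the hypothesis $m(\{\nabla_M v=0\})=0$ of \cite{KNOCT} fails. So you must treat this state separately. It is the only such state, since the Husimi transform is injective. Both inequalities pass to it by continuity in $\rho$: $D_N(\rho)$ is continuous, and $C_{N,\Omega}(\rho)$ is affine in $\rho$.

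Also, in your rigidity step for Lemma~\ref{lem:uniquet*}, ``constant on the ball, hence $T=1$'' needs one more line. By real-analyticity, $\sum_j\lambda_j|Q_j(1,z)|^2$ is then constant on all of $\mathbb C^d$. Each $Q_j(1,\cdot)$ is therefore a bounded polynomial, hence constant by Liouville. Orthonormality of the $Q_j$ then forces $\rho=\pi_{\theta_1}$, so $T=1$.
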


Finally, we recall the definition of the generalized Wehrl entropy $\mathcal S_{N,\Phi}$ in \eqref{eq:entropyop}, which is equivalent to 
\begin{align*}
S_{N,\Phi}(\rho)&=-\binom{N+d}{N}\int_{\mathbb C^d} \Phi\left(U(\zeta)\right) d\sigma(\zeta),
\end{align*}
with $U$ as in \eqref{eq:U}.
The following result is the  generalization of Theorem~\ref{thm:Wehrl} and a quantitative version of Theorem~\ref{thm:LS16op}.

\begin{thm}\label{thm:Wehrl_generalop}
  Let $\Phi:[0,1]\to\mathbb R$ be a  convex, non-linear function and let $d\in\mathbb N$.  Then there exist  constants $C > 0$  and $N_\Phi\in\mathbb N$ (depending only on $\Phi$ and $d$) such that for any $N\geq N_\Phi$, the following holds: Let $\rho:\mathbb P_N^d\to \mathbb P_N^d$ be a  positive-semidefinite  operator with $\Tr \rho=1$, then
  \begin{align*}
      D_N(\rho)^2\leq C\big(\mathcal S_{N,\Phi}(\rho)-\mathcal S_{N,\Phi}(\rho_0)\big),
  \end{align*}
  where $\rho_0$ is any coherent state, i.e. $\rho_0=\pi_\eta$ for any $\eta\in\partial\mathbb B$.
\end{thm}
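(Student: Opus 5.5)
The plan is to reduce to the pure-state argument of Theorem~\ref{thm:Wehrl}: all the estimates of Section~\ref{sec:superlevelsets} are phrased in terms of the distribution function $\mu(t)$ of the Husimi function $U$. The point is to check which properties of $U=|Q|^2$ were actually used there. Write $\rho=\sum_j\lambda_j\langle\cdot,Q_j\rangle Q_j$, so that $U(\zeta)=\sum_j\lambda_j|Q_j(\zeta)|^2$. Then $\int U\,d\sigma=\binom{N+d}{d}^{-1}$, so \eqref{eq:int_mu_muinv} holds. Also $U\le 1$, with $T=\sup U=1$ if and only if $\rho=\pi_\eta$ for some $\eta$: indeed $T=\langle K_N(\cdot,\eta),\rho K_N(\cdot,\eta)\rangle$ is the expectation of $\rho$ in a unit vector, and it equals $1$ only when $K_N(\cdot,\eta)$ is an eigenvector with eigenvalue $1$. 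Moreover, $\log U$ is plurisubharmonic in affine coordinates, and $\log\big(U(1,z)(1+|z|^2)^{N}\big)=\log\sum_j\lambda_j|q_j|^2$ is psh. Hence $\Delta_M v\ge -2Nd\beta_d^{-1/d}$, with the inequality in the right direction for \cite{KNOCT}*{Theorem 1.1}. The level sets of $U$ have measure zero because $U$ is real-analytic and nonconstant unless $\rho$ is a multiple of the identity. So Lemma~\ref{lem:on_mu}, Lemma~\ref{lem:uniquet*} and Proposition~\ref{prop:ineqLS} (Lieb--Solovej for mixed states, Theorem~\ref{thm:LS16op}) carry over.

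The first key step is an analogue of Lemma~\ref{lem:valuemin}. Assume $T=\langle e,\rho e\rangle$ with $e=K_N(\cdot,\eta)$. The plan is to show $\|\rho-\pi_\eta\|_1^2\le C(1-T)$. This follows from the standard estimate for density matrices: writing $\rho$ in the block decomposition with respect to $e$ and $e^\perp$, the off-diagonal block $b$ satisfies $|b|^2\le T(1-T)$ by positivity, and the $e^\perp$ block has trace $1-T$. Hence $\|\rho-\pi_\eta\|_1\le 2(1-T)+2\sqrt{T(1-T)}\le C\sqrt{1-T}$. This reduces the theorem to proving $1-T\le C\big(\mathcal S_{N,\Phi}(\rho)-\mathcal S_{N,\Phi}(\rho_0)\big)$.

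The second key step, and the one I expect to be the main obstacle, is Lemma~\ref{lem:super-level} for mixed states, since its proof uses the decomposition $Q=\sqrt T\zeta_1^N+\epsilon\tilde Q$ and the phase cancellation $M'(0)=0$. I would redo it with $U=T|\zeta_1|^{2N}+2\Re\big(\bar\zeta_1^N\langle \rho e,\cdot\rangle\text{-term}\big)+\langle\cdot\rangle_{e^\perp}$. Concretely, $U(\zeta)=T|\zeta_1|^{2N}+2\Re\big(\zeta_1^N\overline{B(\zeta)}\big)+R(\zeta)$, where $B\in\mathbb P^d_N$ is $\rho e$ projected onto $e^\perp$, with $\|B\|\le\sqrt{T(1-T)}$, and $B$ has no $\zeta_1^N,\zeta_1^{N-1}\zeta'$ terms by maximality at $e$. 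The remainder satisfies $0\le R\le(1-T)(1-|\zeta_1|^{2N})$, by the bound \eqref{eq:estimateQ^2_s} applied to the positive operator restricted to $e^\perp$. Then the same bound on $U$ as in Step~1 of Lemma~\ref{lem:super-level} holds with $\epsilon\tilde Q$ replaced by $B/\sqrt T$. The estimates \eqref{eq:estimateh}--\eqref{eq:estimatehrr} and the vanishing $M'(0)=0$ only used that $B$ is a polynomial without the first two $\zeta_1$-powers. Thus Lemma~\ref{lem:super-level}, and hence Lemmas~\ref{lem:estimateintegral}, \ref{lem:lower1} and~\ref{lem:lower2}, hold verbatim.

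Finally, I will repeat the proof of Theorem~\ref{thm:Wehrl}. Split $\Phi=\Phi_0+\Phi_1$ with $\Phi_0$ built at $a$, where $\Phi'(a)<\Phi'(b)$. Use Theorem~\ref{thm:LS16op} for $\Phi_1$. For $\mathcal S_0=\int_a^1(\Phi'(t)-\Phi'(a))(\mu_0-\mu)\,dt$, use the case distinction on $t^*$ relative to $a$ and $b$ via Lemma~\ref{lem:uniquet*}, taking $N\ge N_\Phi$ so that $t_0^{\tilde\omega}<a$. Lemmas~\ref{lem:estimateintegral} and~\ref{lem:lower2} then give $\mathcal S_0\ge CN^{-d}(1-T)$. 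Multiplying by $\binom{N+d}{d}$ and using \eqref{eq:binompot} and the trace-norm estimate above concludes the proof.
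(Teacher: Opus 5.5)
Your reduction is the route the paper intends. The paper only says to rerun the proof of Theorem~\ref{thm:Wehrl} with mild adaptations, and the mixed-state ingredients you supply are correct:
the bound $\|\rho-\pi_\eta\|_1\le 2(1-T)+2\sqrt{T(1-T)}$ in place of Lemma~\ref{lem:valuemin};
the splitting $U=T|\zeta_1|^{2N}+2\Re\big(\bar\zeta_1^N B\big)+R$ with $\|B\|\le\sqrt{T(1-T)}$, no $\zeta_1^N,\zeta_1^{N-1}\zeta'$ terms in $B$, and $0\le R\le(1-T)(1-|\zeta_1|^{2N})$, which lets Lemma~\ref{lem:super-level} go through unchanged;
and plurisubharmonicity of $\log\sum_j\lambda_j|q_j|^2$ for Lemma~\ref{lem:on_mu}.

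The gap is in the final step, which you copy from the proof of Theorem~\ref{thm:Wehrl}. When $t^*\ge a$ you need $\int_a^1(\mu_0-\mu)\,dt\ge cN^{-d}(1-T)$. You get it from $\int_a^1(\mu_0-\mu)\ge\int_{t_0^{\tilde\omega}}^1(\mu_0-\mu)$ plus Lemma~\ref{lem:lower2}. But the proof of Lemma~\ref{lem:lower2} discards the part of $\int(\mu_0-\mu)$ between $t_0^{\tilde\omega}$ and $T$ (or $t_0=1-\tilde\omega^{1/d}$), and applies Lemma~\ref{lem:estimateintegral} at $t_0$. Both steps need $t^*\le t_0^{\tilde\omega}$, i.e.\ the hypothesis of Proposition~\ref{prop:case2}, which is the opposite of the case at hand. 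Without that hypothesis the lemma is false. Since $\int_0^1\mu\,dt=\int_0^1\mu_0\,dt=\binom{N+d}{d}^{-1}$,
\[
\int_{t_0^{\tilde\omega}}^1(\mu_0-\mu)\,dt=\int_0^{t_0^{\tilde\omega}}(\mu-\mu_0)\,dt\le(1-\tilde\omega^{1/d})^N ,
\]
which vanishes for $d=1$ and is exponentially small in $N$ in general. Meanwhile $t^*\ge a$ with $T$ bounded away from $1$ does occur. For $Q=(\zeta_1^N+\zeta_2^N)/\sqrt2$ one has $T=\frac12$ and $\mu(t)\approx 2\mu_0(2t)$, so $t^*$ stays above a fixed positive constant as $N\to\infty$. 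This case can only be avoided by choosing $a\ge T^*$, i.e.\ when $\Phi$ is non-linear near $1$, which is exactly the situation the theorem is meant to go beyond.

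The step can be repaired with tools you already have.
\begin{enumerate}
\item Since $\int_a^1\mu\,dt=\int(U-a)_+\,d\sigma=\max_s\big(\int_0^s\mu^{-1}-as\big)$, with the maximum attained at $s=\mu(a)$, and similarly for $\mu_0$, one gets $\int_a^1(\mu_0-\mu)\,dt\ge\int_0^{\mu(a)}(\mu_0^{-1}-\mu^{-1})\,ds$.
\item By Chebyshev, $\mu(a)\le a^{-1}\binom{N+d}{d}^{-1}$, so $s^*=\mu(t^*)\le\mu(a)<\tilde\omega$ for $N\ge N_\Phi$.
\item Case~2 in the proof of Lemma~\ref{lem:upper1} (with $\hat s=\mu(a)$) and Lemma~\ref{lem:lower1} then give
\[
\int_0^{\mu(a)}(\mu_0^{-1}-\mu^{-1})\,ds\ \ge\ \binom{N+d}{d}\Big(\int_{\mu(a)}^{\tilde\omega}(1-s^{1/d})^N\,ds\Big)\int_{t^*}^1(\mu_0-\mu)\,dt\ \ge\ c(a,d)\,\frac{1-T}{N^d}.
\]
\end{enumerate}
The last inequality holds because $\mu(a)\le C_aN^{-d}$, which keeps $\binom{N+d}{d}\int_{\mu(a)}^{\tilde\omega}(1-s^{1/d})^N\,ds$ bounded below by a constant depending on $a$ and $d$. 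All of these ingredients transfer to mixed states by your own checks: the monotonicity of $\mu^{-1}/\mu_0^{-1}$ on $(0,\tilde\omega)$, Proposition~\ref{prop:ineqLS}, and Lemma~\ref{lem:lower1}.
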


The proof of both results work as those of Theorems~\ref{thm:concentration} and~\ref{thm:Wehrl}, respectively, with mild adaptations as those  pointed out in \cite{GFOC25}*{Section 5} in the one-dimensional case.  Therefore, we omit the details here and invite the interested reader to apply remarks in \cite{GFOC25}*{Section 5} to this setting, working on either $\mathbb P^d_N$ or $\mathcal P^d_N$. Indeed, Theorems~\ref{thm:concentration_generalop} and~\ref{thm:Wehrl_generalop} could have been written similarly for operators $\mathcal P_N^d\to\mathcal P_N^d$. 
In this setting, we could consider the limit as $N\to\infty$ and deduce analogous results in the Bargmann-Fock space, recovering in particular the stability of the generalized Wehrl conjecture of \cite{FNT25}*{Theorem~3}.

\subsection*{Acknowledgments}
{MAGF has been partially supported by the grants PID2024-156055NA-I00 and CEX2023-001347-S, funded by MICIU/AEI/10.13039/501100011033.
She also gratefully acknowledges the CRM-MdM Chair or Excellence, funded by grant CEX2020-001084-M.
JOC  has been supported by grants PID2024-160033NB-I00 and CEX2020-001084-M by the Agencia Estatal de Investigación and by  2024 ICREA 00142 grant by the Generalitat de Catalunya.  
We thank Alexei Kulikov and Joaqu\'in P\'erez for helpful conversations.}

\DefineSimpleKey{bib}{archiveprefix}{}

\BibSpec{arXiv}{
  +{}{\PrintAuthors}{author}
  +{,}{ \textit}{title}
  +{}{ \parenthesize}{date}
  +{,}{ arXiv }{eprint}
}


\begin{bibdiv}
\begin{biblist}
\bib{Barbosa}{article}{
author={Barbosa, J. Lucas},
author={do Carmo, Manfredo},
author={Eschenburg, Jost},
title={Stability of hypersurfaces of constant mean curvature in Riemannian manifolds},
journal = {Math. Z.},
ISSN = {0025-5874},
year={1988},
volume={197}, 
number = {1},
pages = {123--138},
DOI = {10.1007/BF01161634},
URL = {https://eudml.org/doc/183722},
}


\bib{Frank23}{article}{
 Author = {Frank, Rupert L.},
 Title = {Sharp inequalities for coherent states and their optimizers},
 Journal = {Adv. Nonlinear Stud.},
 ISSN = {1536-1365},
 Volume = {23},
 Pages = {28},
 Note = {Id/No 20220050},
 Year = {2023},
}

\bib{FNT25}{article}{
  author = {Frank, Rupert L.},
  author = {Nicola, Fabio},
  author = {Tilli, Paolo},
  title = {The generalized Wehrl entropy bound in quantitative form},
  journal = {J. Eur. Math. Soc.},
  year = {2025},
 }

 \bib{GFOC25}{article}{
   author={Garc\'ia-Ferrero, Mar\'ia \'Angeles},
   author={Ortega-Cerd\'a, Joaquim},
   title={Stability of the concentration inequality on polynomials},
   journal={Comm. Math. Phys.},
   volume={406},
   date={2025},
   number={5},
   pages={Paper No. 112, 35},
   issn={0010-3616},
}

\bib{GGRT}{article}{
  author={G\'omez, Jaime},
  author={Guerra, Andr\'e},
  author={Ramos, Jo\~ao P.G.},
  author={Tilli, Paolo},
  title={Stability of the Faber-Krahn inequality for the short-time Fourier transform},
  journal={Invent. Math.},
  volume={236},
  pages={779-836},
  date={2024},
}


\bib{Hardy}{book}{
  author = {Hardy, G. H.},
  author = {Littlewood, John E.},
  author = {P\'olya, George},
  isbn = {0521358809},
  publisher = {Cambridge University Press},
  title = {Inequalities},
  year = {1988},
}


\bib{KNOCT}{article}{
   author={Kulikov, Aleksei},
   author={Nicola, Fabio},
   author={Ortega-Cerdà, Joaquim},
   author={Tilli, Paolo},
   title={A monotonicity theorem for subharmonic functions on manifolds},
   journal={Adv. Math.},
   volume={479},
   date={2025},
   pages={Paper No. 110423, 18},
   issn={0001-8708},
}

\bib{LiebSolovej14}{article}{
  author={Lieb, Elliott H.},
  author={Solovej,Jan Philip},
  title={Proof of an entropy conjecture for Bloch coherent spin states and its generalizations},
   journal={Acta Math},
   volume={212},
   date={2014},
   number={2},
   pages={379--398},
}

\bib{LiebSolovej16}{article}{
 author = {Lieb, Elliott H.},
 author = {Solovej, Jan Philip},
 title = {Proof of the {Wehrl}-type entropy conjecture for symmetric {{\({SU(N)}\)}} coherent states},
 journal = {Commun. Math. Phys.},
 issn = {0010-3616},
 volume = {348},
 number = {2},
 pages = {567--578},
 year = {2016},
}

\bib{Nardulli09}{article}{
   author={Nardulli, Stefano},
   title={The isoperimetric profile of a smooth Riemannian manifold for
   small volumes},
   journal={Ann. Global Anal. Geom.},
   volume={36},
   date={2009},
   number={2},
   pages={111--131},
   issn={0232-704X},
}


\bib{NRT25}{arXiv}{
 author = {Nicola, Fabio},
 author = {Riccardi, Federico},
 author = {Tilli, Paolo},
 title = {The {Wehrl}-type entropy conjecture for symmetric ${SU}({N})$ coherent states: cases of equality and stability},
 year = {2025},
 url = {https://arxiv.org/abs/2412.10940},
 eprint = {2412.10940},
 archiveprefix={arXiv},
}


\bib{NRT25b}{arXiv}{
 author = {Nicola, Fabio},
 author = {Riccardi, Federico},
 author = {Tilli, Paolo},
 title = {An elementary approach to {Wehrl}-type entropy bounds in quantitative form},
 year = {2025},
 url = {https://arxiv.org/abs/2512.04245},
 eprint = {2512.04245}, 
 archiveprefix={arXiv},
}

\bib{Oss}{article}{
   author={Osserman, Robert},
   title={The isoperimetric inequality},
   journal={Bull. Amer. Math. Soc.},
   volume={84},
   date={1978},
   number={6},
   pages={1182--1238},
   issn={0002-9904},
}

\bib{Rudin}{book}{
 author = {Rudin, Walter},
 title = {Function theory in the unit ball of {{\({\mathbb{C}}^ n\)}}},
 edition = {Reprint of the 1980 original},
 series = {Class. Math.},
 issn = {1431-0821},
 isbn = {978-3-540-68272-1},
 year = {2008},
 publisher = {Berlin: Springer},
 language = {English},
}

\bib{Shiffman15}{article}{
   author={Shiffman, Bernard},
   title={Uniformly bounded orthonormal sections of positive line bundles on
   complex manifolds},
   conference={
      title={Analysis, complex geometry, and mathematical physics: in honor
      of Duong H. Phong},
   },
   book={
      series={Contemp. Math.},
      volume={644},
      publisher={Amer. Math. Soc., Providence, RI},
   },
   isbn={978-1-4704-1464-1},
   date={2015},
   pages={227--240},
}

\end{biblist}
\end{bibdiv}
\end{document}